	\def\NAT@def@citea{\def\@citea{\NAT@separator}}
\theoremstyle{plain}
\newtheorem{theorem}{Theorem}[section]
\newtheorem{lemma}[theorem]{Lemma}
\newtheorem{corollary}[theorem]{Corollary}
\newtheorem{proposition}[theorem]{Proposition}
\theoremstyle{definition}
\newtheorem{example}[theorem]{Example}
\theoremstyle{remark}
\newtheorem{remark}[theorem]{Remark}
\newcommand{\iprod}[2]{\langle#1,#2\rangle}
\newcommand{\iprodX}[2]{\langle#1,#2\rangle_{X}}
\newcommand{\iprodY}[2]{\langle#1,#2\rangle_{Y}}
\newcommand{\iprodZ}[2]{\langle#1,#2\rangle_{Z}}
	\let\norm=\enVert
\newcommand{\normX}[1]{\lVert#1\rVert_{X}}
\newcommand{\normY}[1]{\lVert#1\rVert_{Y}}
\newcommand{\normZ}[1]{\lVert#1\rVert_{Z}}
\newcommand{\rl}{\mathbb R}
\newcommand{\CC}{\mathcal{C}}
\newcommand{\LL}{\mathcal{L}}
\newcommand{\OO}{\mathcal{O}}
\DeclareMathOperator{\sri}{sri}
\DeclareMathOperator{\proj}{proj}
\DeclareMathOperator{\id}{Id}
\begin{document}
\title{%
	On the Arrow--Hurwicz differential system for linearly constrained convex
	minimization
}
\author{%
	\name{%
		Simon K.\ Niederl{\"a}nder\thanks{
			CONTACT Simon K.\ Niederl{\"a}nder. Email:
			niederlaender@ist.uni-stuttgart.de}}
	\affil{%
		Institute for Systems Theory and Automatic Control,
		University of Stuttgart,\\ Stuttgart, Germany}}
\maketitle

\begin{abstract}
	In a real Hilbert space setting, we reconsider the classical Arrow--Hurwicz differential system in view of solving linearly constrained convex minimization problems. We investigate the asymptotic properties of the differential system and provide conditions for which its solutions converge towards a saddle point of the Lagrangian associated with the convex minimization problem. Our convergence analysis mainly relies on a `Lagrangian identity' which naturally extends on the well-known descent property of the classical continuous steepest descent method. In addition, we present asymptotic estimates on the decay of the solutions and the primal-dual gap function measured in terms of the Lagrangian. These estimates are further refined to the ones of the classical damped harmonic oscillator provided that second-order information on the objective function of the convex minimization problem is available. Finally, we show that our results directly translate to the case of solving structured convex minimization problems. Numerical experiments further illustrate our theoretical findings.
\end{abstract}

\begin{keywords}
	Arrow--Hurwicz differential system; Lyapunov analysis; asymptotic properties; exponential stabilization; convex minimization; saddle-point problem
\end{keywords}

\begin{amscode}
	37N40; 46N10; 49M30; 65K05; 90C25
\end{amscode}

\section{Introduction}\label{sec1}
Let $X$ and $Y$ be real Hilbert spaces endowed with inner products $\iprodX{\,\cdot\,}{\,\cdot\,}$, $\iprodY{\,\cdot\,}{\,\cdot\,}$ and induced norms $\normX{\,\cdot\,}$, $\normY{\,\cdot\,}$. Consider the minimization problem
\begin{equation}
\renewcommand{\theequation}{P}\tag{\theequation}\label{sec1:cvxproblem}
	\inf\,\{f(x)\mid Ax-b=0_{Y}\},
\end{equation}
where $f:X\to\rl$ is a convex and continuously differentiable function, $A:X\to Y$ a linear and continuous operator, and $b\in Y$. We associate with \eqref{sec1:cvxproblem} the Lagrangian
\begin{align*}
	L:X\times Y&\longrightarrow\rl\\
	(x,\lambda)&\longmapsto f(x)+\iprodY{\lambda}{Ax-b}
\end{align*}
which, by construction, is a convex-concave and continuously differentiable bifunction. A pair $(\bar{x},\bar{\lambda})\in X\times Y$ is a saddle point of the Lagrangian $L$ if
\begin{equation*}
	L(\bar{x},\lambda)\leq L(\bar{x},\bar{\lambda})\leq L(x,\bar{\lambda}),
	\quad\forall(x,\lambda)\in X\times Y.
\end{equation*}
It is well known that $(\bar{x},\bar{\lambda})\in X\times Y$ is a saddle point of $L$ if and only if $\bar{x}$ is a minimizer of \eqref{sec1:cvxproblem}, $\bar{\lambda}$ is a maximizer of the Lagrange dual to \eqref{sec1:cvxproblem}, that is
\begin{equation}
\renewcommand{\theequation}{D}\tag{\theequation}\label{sec1:dualcvxproblem}
	\sup\,\{-f^{\ast}(-A^{\ast}\lambda)-\iprodY{\lambda}{b}\mid\lambda\in Y\},
\end{equation}
and the optimal values of \eqref{sec1:cvxproblem} and \eqref{sec1:dualcvxproblem} coincide; see, e.g., Ekeland and T{\'e}mam \cite{IE-RT:99}. Here, $f^{\ast}:X\to\rl\cup\{+\infty\}$ denotes the Fenchel conjugate of $f$ defined by $f^{\ast}(u)=\sup\,\{\iprodX{u}{x}-f(x)\mid x\in X\}$, and $A^{\ast}:Y\to X$ refers to the adjoint operator of $A$. Equivalently, $(\bar{x},\bar{\lambda})\in X\times Y$ is a saddle point of $L$ if and only if $(\bar{x},\bar{\lambda})$ solves the system of primal-dual optimality conditions
\begin{equation*}
	\begin{cases}
		\nabla f(x)+A^{\ast}\lambda=0_{X}\\
		\hspace{30pt}Ax-b=0_{Y}
	\end{cases}
\end{equation*}
with $\nabla f$ denoting the gradient of $f$. Throughout the text, we denote by $S\times M\subset X\times Y$ the (possibly empty) set of saddle points of $L$. We recall that a saddle point of $L$ exists whenever \eqref{sec1:cvxproblem} admits a minimizer and, for instance, the constraint qualification
\begin{equation*}
	b\in\sri A(X)
\end{equation*}
is verified\footnote{%
	We remark that, in the finite-dimensional case, the condition amounts to $b\in A(X)$ which is commonly re- ferred to as Slater assumption; see, e.g., Hiriart-Urruty and Lemar{\'e}chal \cite{JBHU-CL:93}.}.
Here, for a convex set $C\subset Y$, we denote by
\begin{equation*}
	\sri C=\{x\in C\mid\bigcup_{\mu>0}\mu(C-x)~
	\text{is a closed linear subspace of $Y$}\}
\end{equation*}
its strong relative interior; see, e.g., Bauschke and Combettes \cite{HHB-PLC:17}. We further recall that \eqref{sec1:cvxproblem} admits a minimizer whenever its feasible set is non-empty and, for instance, $f$ is coercive, that is, $\lim_{\normX{x}\to+\infty} f(x)=+\infty$. On the other hand, if the feasible set of \eqref{sec1:cvxproblem} is non-empty and $f$ is strongly convex, then \eqref{sec1:cvxproblem} admits a unique minimizer.

In this work, we reconsider the classical Arrow--Hurwicz differential system
\begin{equation}
\renewcommand{\theequation}{AH}\tag{\theequation}\label{sec1:arrowhurwicz}
	\begin{cases}
		\dot{x}+\nabla f(x)+A^{\ast}\lambda=0_{X}\\
		\hspace{30pt}\dot{\lambda}+b-Ax=0_{Y}
	\end{cases}
\end{equation}
relative to the convex minimization problem \eqref{sec1:cvxproblem}. The \eqref{sec1:arrowhurwicz} differential system was in essence originated by Arrow and Hurwicz \cite{KJA-LH:51} (see also Kose \cite{TK:56}, Arrow et al. \cite{KJA-LH-HU:58}) and is known to be intimately related to the mini-maximization of the Lagrangian $L$ associated with \eqref{sec1:cvxproblem}. Indeed, given the above system of primal-dual optimality conditions, we immediately observe that the zeros of the operator
\begin{align*}
	T:X\times Y&\longrightarrow X\times Y\\
	(x,\lambda)&\longmapsto(\nabla_{x}L(x,\lambda),-\nabla_{\lambda}L(x,\lambda)),
\end{align*}
that is, the `generator' of the \eqref{sec1:arrowhurwicz} differential system, are precisely the saddle points of the Lagrangian $L$, i.e.,
\begin{equation*}
	(\bar{x},\bar{\lambda})\in S\times M\quad\iff\quad
	T(\bar{x},\bar{\lambda})=(0_{X},0_{Y}).
\end{equation*}
Moreover, the operator $T$ is maximally monotone on $X\times Y$ as it is both monotone and continuous; cf. Minty \cite{GJM:62}. Therefore, $S\times M$ can be interpreted as the set of zeros of the maximally monotone operator $T$ and, as such, it is a closed and convex subset of $X\times Y$. The latter may also be deduced more elementary from the convexity-concavity properties of the `saddle function' $L$; cf. Rockafellar \cite{RTR:69a}.

\subsection{Preliminary facts}
As emphasized by Rockafellar \cite{RTR:71}, the general theory for semi-groups of contractions generated by maximally monotone operators (see, e.g., Crandall and Pazy \cite{MGC-AP:69}, Br{\'e}zis \cite{HB:73}) applies to the Arrow--Hurwicz differential system \eqref{sec1:arrowhurwicz}. These results, dating back to the works of Kato \cite{TK:67} and K{\=o}mura \cite{YK:67} (see also Browder \cite{FEB:76}), imply that the Cauchy problem associated with \eqref{sec1:arrowhurwicz} is well posed and that its (classical) solutions $(x,\lambda),(y,\eta):[0,+\infty)\to X\times Y$ verify the `non-expansiveness property'
\begin{equation*}
	\frac{\mathrm{d}}{\mathrm{d}t}\big(\normX{x(t)-y(t)}^{2}+\normY{\lambda(t)-\eta(t)}^{2}\big)\leq0,\quad\forall t\geq0.
\end{equation*}
If, in addition, the set $S\times M$ is non-empty, then the solutions $(x(t),\lambda(t))$ of \eqref{sec1:arrowhurwicz} remain bounded and, in fact, weakly converge in average, as $t\to+\infty$, towards a saddle point of the Lagrangian $L$ (see Baillon and Br{\'e}zis \cite{JBB-HB:76}), i.e., there exists $(\bar{x},\bar{\lambda})\in S\times M$ such that
\begin{equation*}
	\frac{1}{t}\int_{0}^{t}(x(\tau),\lambda(\tau))\,\mathrm{d}\tau\rightharpoonup
	(\bar{x},\bar{\lambda})\ \text{as $t\to+\infty$}.
\end{equation*}

The (asymptotic) stability properties of the solutions of \eqref{sec1:arrowhurwicz} (in the sense of Lyapunov) were further investigated by Venets \cite{VIV:85} (see also Fl\aa{}m and Ben-Israel \cite{SDF-ABI:89}). These results suggest that the solutions of \eqref{sec1:arrowhurwicz} tend towards a saddle point of $L$ giv- en that, for any $(x,\lambda)\in X\times Y$ with $x\notin S$, it holds that
\begin{equation*}
	L(\bar{x},\lambda)\leq L(\bar{x},\bar{\lambda})<L(x,\bar{\lambda}),
	\quad\forall(\bar{x},\bar{\lambda})\in S\times M.
\end{equation*}
The above condition is, of course, trivially satisfied whenever $f$ is strictly convex. In the respective works, the authors further noted that the solutions $(x,\lambda):[0,+\infty)\to X\times Y$ of \eqref{sec1:arrowhurwicz} obey the `Lagrangian identity'
\begin{equation}\label{sec1:lagidentity}
	\frac{\mathrm{d}}{\mathrm{d}t}L(x(t),\lambda(t))+\normX{\dot{x}(t)}^{2}=
	\normY{\dot{\lambda}(t)}^{2},\quad\forall t\geq0.
\end{equation}
The identity, however, was not pursued any further due to its indefinite character. We remark that, in the unconstrained case of \eqref{sec1:cvxproblem}, the above identity reduces to the well-known `descent property'
\begin{equation*}
	\frac{\mathrm{d}}{\mathrm{d}t}f(x(t))+\normX{\dot{x}(t)}^{2}=0,
	\quad\forall t\geq0
\end{equation*}
associated with the classical continuous steepest descent method; see, e.g., Br{\'e}zis \cite{HB:73}, Aubin and Cellina \cite{JPA-AC:84}.

Finally, the exponential decay properties of the solutions of \eqref{sec1:arrowhurwicz} were investigated by Polyak \cite{BTP:70}. Using spectral arguments, the work provides conditions for which the solutions $(x(t),\lambda(t))$ of \eqref{sec1:arrowhurwicz} converge at an exponential rate, as $t\to+\infty$, towards a saddle point $(\bar{x},\bar{\lambda})$ of $L$, i.e., for which there exists $\rho>0$ such that
\begin{equation*}
	\normX{x(t)-\bar{x}}^{2}+\normY{\lambda(t)-\bar{\lambda}}^{2}=
	\OO\big(\mathrm{e}^{-\rho t}\big)\ \text{as $t\to+\infty$}.
\end{equation*}
The decay rate estimates are, however, not derived in an explicit form.

In this work, our objective is to recover, unify and extend some of the previous results on the classical Arrow--Hurwicz differential system \eqref{sec1:arrowhurwicz} in view of solving the linearly constrained convex minimization problem \eqref{sec1:cvxproblem}. Using tools from monotone operator theory, we focus our attention on the convergence properties of the solutions of \eqref{sec1:arrowhurwicz} and further aim to characterize their limit within the set of saddle points of the Lagrangian. We also intend to make a contribution to the issue of finding (explicit) decay rate esti- mates on the solutions of \eqref{sec1:arrowhurwicz}.

\subsection{Presentation of the results}
The mini-maximizing properties of the solutions $(x,\lambda):[0,+\infty)\to X\times Y$ of \eqref{sec1:arrowhurwicz} with respect to the convex minimization problem \eqref{sec1:cvxproblem} and its associated Lagrange dual \eqref{sec1:dualcvxproblem} are conveniently measured in terms of the `primal-dual gap function'
\begin{equation*}
	t\longmapsto L(x(t),\,\cdot\,)-L(\,\cdot\,,\lambda(t))
\end{equation*}
relative to the set $S\times M$. Whenever the function $f$ is convex, we observe that the solutions $(x(t),\lambda(t))$ of \eqref{sec1:arrowhurwicz} may fail to converge as $t\to+\infty$ even though the set of saddle points of $L$ is comprised of a single element. As a consequence, it is natural to first study the average behavior of a solution of \eqref{sec1:arrowhurwicz}. Using the notion of the Ces{\`a}ro average $(\sigma,\omega):(0,+\infty)\to X\times Y$ of a solution of \eqref{sec1:arrowhurwicz}, viz.,
\begin{equation*}
	(\sigma(t),\omega(t))=\frac{1}{t}\int_{0}^{t}(x(\tau),\lambda(\tau))\,
	\mathrm{d}\tau,
\end{equation*}
we find that the solutions of \eqref{sec1:arrowhurwicz} obey in average, for any $(\xi,\eta)\in S\times M$, the estimate
\begin{equation*}
	L(\sigma(t),\eta)-L(\xi,\omega(t))=\OO\Big(\frac{1}{t}\Big)\
	\text{as $t\to+\infty$}.
\end{equation*}
In this case, the Ces{\`a}ro average $(\sigma(t),\omega(t))$ of a solution of \eqref{sec1:arrowhurwicz} weakly converges, as $t\to+\infty$, towards a saddle point of $L$. This result is in line with the work by Nemirovski and Yudin \cite{ASN-DBY:78} on the classical Arrow--Hurwicz method and may also be deduced more elementary by the results of Baillon and Br{\'e}zis \cite{JBB-HB:76}.

Whenever $f$ is strongly convex, we obtain more stringent mini-maximizing properties of the solutions of \eqref{sec1:arrowhurwicz} relative to the primal-dual gap function. More precisely, we show that the solutions $(x,\lambda):[0,+\infty)\to X\times Y$ of \eqref{sec1:arrowhurwicz} evolve, for any $(\xi,\eta)\in S\times M$, according to the estimate
\begin{equation*}
	L(x(t),\eta)-L(\xi,\lambda(t))=o\Big(\frac{1}{\sqrt{t}}\Big)\
	\text{as $t\to+\infty$}.
\end{equation*}
Moreover, the solutions $(x(t),\lambda(t))$ of \eqref{sec1:arrowhurwicz} are proven to converge weakly, as $t\to+\infty$, towards an element of the set of saddle points of $L$. In particular, we characterize~the weak limit of a solution of \eqref{sec1:arrowhurwicz} as the orthogonal projection of its initial data $(x_{0},\lambda_{0})\in X\times Y$ onto the (closed and convex) set $S\times M$, i.e.,
\begin{equation*}
	(x(t),\lambda(t))\rightharpoonup\proj_{S\times M}(x_{0},\lambda_{0})\
	\text{as $t\to+\infty$}.
\end{equation*}
If, in addition, the linear operator $A^{\ast}$ is bounded from below, we observe that the so- lutions of \eqref{sec1:arrowhurwicz} obey, for $(\xi,\eta)\in S\times M$, the refined estimate
\begin{equation*}
	L(x(t),\eta)-L(\xi,\lambda(t))=o\Big(\frac{1}{t}\Big)\
	\text{as $t\to+\infty$}.
\end{equation*}
In this case, it is proven that the solutions $(x(t),\lambda(t))$ of \eqref{sec1:arrowhurwicz} strongly converge, as $t\to+\infty$, towards the unique saddle point of $L$.

Finally, we show that the solutions of \eqref{sec1:arrowhurwicz} decay asymptotically at an exponential rate provided that $f$ is twice continuously differentiable, satisfying
\begin{equation*}
	\iprodX{\nabla^{2}f(x)(x-y)}{x-y}\leq2D_{f}(y,x),\quad\forall x,y\in X.
\end{equation*}
Here, $D_{f}$ denotes the Bregman distance associated with $f$, cf. Bregman \cite{LMB:67}, and $\nabla^{2}f$ refers to the Hessian of $f$. In particular, we show that under the above condition there exists $\rho>0$ such that the solutions $(x,\lambda):[0,+\infty)\to X\times Y$ of \eqref{sec1:arrowhurwicz} verify, for any $(\xi,\eta)\in S\times M$, either one of the following exponential estimates:
\begin{align*}
	L(x(t),\eta)-L(\xi,\lambda(t))&=\OO\big(\mathrm{e}^{-2\rho t}\big)\
	\text{as $t\to+\infty$};\\
	L(x(t),\eta)-L(\xi,\lambda(t))&=\OO(t^{2}\mathrm{e}^{-2\rho t})\ \text{as $t\to+\infty$}.
\end{align*}
This result complements the decay rate estimates obtained earlier by Polyak \cite{BTP:70}.

\subsection{Organization}
We begin our discussion by reviewing some basic properties of the solutions of \eqref{sec1:arrowhurwicz} in the case of a convex objective function of the minimization problem \eqref{sec1:cvxproblem}. In Section~\ref{sec3}, we then investigate their asymptotic properties under the more stringent assumption of a strongly convex objective function. In Section \ref{sec4}, we show that the solutions of \eqref{sec1:arrowhurwicz} decay at an exponential rate provided that second-order information on the objective function is available. In Section \ref{sec5}, we highlight that the results on the Arrow--Hurwicz differential system \eqref{sec1:arrowhurwicz} may directly be conveyed to the case of solving structured minimization problems. Finally, Section \ref{sec6} is devoted to numerical experiments.

\section{Basic properties}\label{sec2}
Let $X\times Y$ be equipped with the Hilbertian product structure $\iprod{\,\cdot\,}{\,\cdot\,}=\iprodX{\,\cdot\,}{\,\cdot\,}+\iprodY{\,\cdot\,}{\,\cdot\,}$ and induced norm $\norm{\,\cdot\,}$. Throughout the text, we assume that
\begin{enumerate}[\hspace{6pt}({A}1)]
	\item $f:X\to\rl$ is convex and continuously differentiable;
	\item $\nabla f:X\to X$ is Lipschitz continuous on bounded sets;
	\item $A:X\to Y$ is linear and continuous, and $b\in Y$.
\end{enumerate}

Consider the Arrow--Hurwicz differential system
\begin{equation}
\renewcommand{\theequation}{AH}\tag{\theequation}\label{sec2:arrowhurwicz}
	\begin{cases}
		\dot{x}+\nabla f(x)+A^{\ast}\lambda=0_{X}\\
		\hspace{30pt}\dot{\lambda}+b-Ax=0_{Y}
	\end{cases}
\end{equation}
with initial data $(x_{0},\lambda_{0})\in X\times Y$, and recall that $(x,\lambda):[0,+\infty)\to X\times Y$ is a (classical) solution of \eqref{sec2:arrowhurwicz} if $(x,\lambda)\in\CC^{1}([0,+\infty);X\times Y)$ and $(x,\lambda)$ satisfies \eqref{sec2:arrowhurwicz} on $[0,+\infty)$ with $(x(0),\lambda(0))=(x_{0},\lambda_{0})$. The following result is an immediate consequence of the monotonicity of the operator
\begin{align*}
	T:X\times Y&\longrightarrow X\times Y\\
	(x,\lambda)&\longmapsto(\nabla f(x)+A^{\ast}\lambda,b-Ax)
\end{align*}
associated with the \eqref{sec2:arrowhurwicz} differential system; cf. Br{\'e}zis \cite[Theorem 3.1]{HB:73}, Aubin and~Cel- lina \cite[Theorem 3.2.1]{JPA-AC:84}. The existence and uniqueness of the (classical) solutions of \eqref{sec1:arrowhurwicz} thereby follow at once from the Cauchy--Lipschitz theorem\footnote{%
	Given the above assumptions, it is easy to verify that $(x,\lambda)\mapsto T(x,\lambda)$ is Lipschitz continuous on the bounded subsets of $X\times Y$.}; see, e.g., Haraux \cite[Proposition 6.2.1]{AH:91}.
\begin{theorem}\label{sec2:th:existence}
	For any $(x_{0},\lambda_{0})\in X\times Y$, there exists a unique solution $(x,\lambda):[0,+\infty)\to X\times Y$ of \eqref{sec2:arrowhurwicz}. Moreover,
	\begin{enumerate}[(i)]
		\item $t\mapsto\norm{(\dot{x}(t),\dot{\lambda}(t))}$ is non-increasing and
		\begin{equation*}
			\norm{(\dot{x}(t),\dot{\lambda}(t))}\leq\norm{T(x_{0},\lambda_{0})},
			\quad\forall t\geq0;
		\end{equation*}
		\item $\lim_{t\to+\infty}\norm{(\dot{x}(t),\dot{\lambda}(t))}$ exists.
	\end{enumerate}
\end{theorem}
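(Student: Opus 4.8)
The existence and uniqueness assertion is granted by the Cauchy--Lipschitz theorem, as already noted in the text; the substance of the statement lies in parts (i) and (ii). The plan is to exploit the monotonicity of $T$ together with the autonomous (time-invariant) character of \eqref{sec2:arrowhurwicz}, writing the system compactly as $\dot{z}=-T(z)$ with $z=(x,\lambda)$.

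First I would record the non-expansiveness between any two solutions, which follows directly from monotonicity. If $z$ and $w$ both solve $\dot{z}=-T(z)$, then
\begin{equation*}
	\tfrac{1}{2}\tfrac{\mathrm{d}}{\mathrm{d}t}\norm{z(t)-w(t)}^{2}
	=\iprod{\dot{z}(t)-\dot{w}(t)}{z(t)-w(t)}
	=-\iprod{T(z(t))-T(w(t))}{z(t)-w(t)}\leq0,
\end{equation*}
so that $t\mapsto\norm{z(t)-w(t)}$ is non-increasing.

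Next, since \eqref{sec2:arrowhurwicz} is autonomous, for each fixed $h>0$ the time-shift $t\mapsto z(t+h)$ is again a solution. Applying the inequality above with $w(t)=z(t+h)$ shows that $t\mapsto\norm{z(t+h)-z(t)}$ is non-increasing for every $h>0$. Dividing by $h$ and letting $h\to0^{+}$, the difference quotients converge pointwise to $\norm{\dot{z}(t)}$ because $z\in\CC^{1}([0,+\infty);X\times Y)$; since a pointwise limit of non-increasing functions is non-increasing, I conclude that $t\mapsto\norm{(\dot{x}(t),\dot{\lambda}(t))}$ is non-increasing. The uniform bound in (i) is then immediate by evaluating at $t=0$: from $\dot{z}(0)=-T(x_{0},\lambda_{0})$ one gets $\norm{\dot{z}(0)}=\norm{T(x_{0},\lambda_{0})}$. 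Finally, part (ii) follows at once, as a non-increasing function bounded below by $0$ admits a limit as $t\to+\infty$.

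The only delicate point is the passage to the limit in the difference quotient, which is precisely why I avoid differentiating $\dot{z}=-T(z)$ a second time: under (A1)--(A3) the operator $T$ need not be differentiable (only Lipschitz on bounded sets), so a direct estimate of $\tfrac{\mathrm{d}}{\mathrm{d}t}\norm{\dot{z}}^{2}$ through the Jacobian of $T$ is unavailable. The semigroup-style argument above sidesteps this entirely by comparing a solution with its own time-shifts and relying solely on the monotonicity of $T$.
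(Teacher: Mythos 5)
Your proposal is correct and takes essentially the same route as the paper, which presents the theorem as an immediate consequence of the monotonicity of $T$ together with the Cauchy--Lipschitz theorem, deferring the details to the classical theory of semi-groups of contractions generated by monotone operators (Br\'ezis, Aubin--Cellina, Haraux). The time-shift argument you spell out---non-expansiveness between solutions applied to $w(t)=z(t+h)$, then passing to the limit in the difference quotient---is precisely the standard argument underlying those citations, so there is nothing to add beyond noting that your write-up makes the paper's citation self-contained.
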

\begin{remark}
	We note that the assertions of Theorem \ref{sec2:th:existence} essentially remain valid even under the assumption that $f:X\to\rl\cup\{+\infty\}$ is a proper convex lower semi-continuous function. In this case, the \eqref{sec2:arrowhurwicz} dynamics generalize to the evolution system
	\begin{equation*}
		\begin{cases}
			\dot{x}+\partial f(x)+A^{\ast}\lambda\ni0_{X}\\
			\hspace{26.5pt}\dot{\lambda}+b-Ax=0_{Y}
		\end{cases}
	\end{equation*}
	with $\partial f$ denoting the convex subdifferential of $f$. The existence and uniqueness of the (strong) solutions of the above differential system are then deduced from the general theory for semi-groups of contractions generated by maximally monotone operators; cf. Br{\'e}zis \cite{HB:73} (see also Pazy \cite{AP:79}, Peypouquet and Sorin \cite{JP-SS:10}).
\end{remark}

In the following, let $S\times M$ denote the set of saddle points of the Lagrangian
\begin{align*}
	L:X\times Y&\longrightarrow\rl\\
	(x,\lambda)&\longmapsto f(x)+\iprodY{\lambda}{Ax-b}
\end{align*}
associated with the convex minimization problem \eqref{sec1:cvxproblem}. Using the convexity of $f$, we immediately observe that, for any $(x,\lambda),(y,\eta)\in X\times Y$, it holds that
\begin{equation}\label{sec2:lagrangianbound}
	\iprod{T(x,\lambda)}{(x,\lambda)-(y,\eta)}\geq L(x,\eta)-L(y,\lambda).
\end{equation}
Anchoring the above inequality to the set $S\times M$ yields the following integrability result for the primal-dual gap function.
\begin{proposition}\label{sec2:pr:lagrangian}
	Let $S\times M$ be non-empty and let $(x,\lambda):[0,+\infty)\to X\times Y$ be a solution of \eqref{sec2:arrowhurwicz}. Then, for any $(\xi,\eta)\in S\times M$,
	\begin{enumerate}[(i)]
		\item $\lim_{t\to+\infty}\norm{(x(t),\lambda(t))-(\xi,\eta)}$ exists;
		\item it holds that
		\begin{equation*}
			\int_{0}^{\infty}L(x(\tau),\eta)-L(\xi,\lambda(\tau))\,\mathrm{d}\tau
			<+\infty.
		\end{equation*}
	\end{enumerate}
\end{proposition}
\begin{proof}
	{\it (i)} Let $(\xi,\eta)\in S\times M$. Using \eqref{sec2:arrowhurwicz} together with \eqref{sec2:lagrangianbound}, we have for any $t\geq0$,
	\begin{equation}\label{sec2:lagdec}
		\frac{\mathrm{d}}{\mathrm{d}t}\big(\normX{x(t)-\xi}^{2}+
		\normY{\lambda(t)-\eta}^{2}\big)/2+L(x(t),\eta)-L(\xi,\lambda(t))\leq0.
	\end{equation}
	Since $(\xi,\eta)$ belongs to $S\times M$, it follows that $L(x(t),\eta)-L(\xi,\eta)\geq0$ as well as $L(\xi,\eta)-L(\xi,\lambda(t))\geq0$ and thus,
	\begin{equation*}
		L(x(t),\eta)-L(\xi,\lambda(t))\geq0.
	\end{equation*}
	Consequently, $t\mapsto\norm{(x(t),\lambda(t))-(\xi,\eta)}$ is non-increasing and bounded from below on $[0,+\infty)$ so that
	\begin{equation*}
		\lim_{t\to+\infty}\norm{(x(t),\lambda(t))-(\xi,\eta)}\ \text{exists}.
	\end{equation*}

	{\it (ii)} Integration of \eqref{sec2:lagdec} over $[0,t]$ yields
	\begin{equation}\label{sec2:lagdecint}
		\begin{split}
			\normX{x(t)-\xi}^{2}/2+\normY{\lambda(t)-\eta}^{2}/2&+
			\int_{0}^{t}L(x(\tau),\eta)-L(\xi,\lambda(\tau))\,\mathrm{d}\tau\\
			\leq\normX{x(0)-\xi}^{2}/2&+\normY{\lambda(0)-\eta}^{2}/2.
		\end{split}
	\end{equation}
	Taking into account that $\norm{(x(t),\lambda(t))-(\xi,\eta)}^{2}\geq0$, we obtain
	\begin{equation*}
		\int_{0}^{t}L(x(\tau),\eta)-L(\xi,\lambda(\tau))\,\mathrm{d}\tau\leq
		\normX{x(0)-\xi}^{2}/2+\normY{\lambda(0)-\eta}^{2}/2.
	\end{equation*}
	This majorization being valid for any $t\geq0$, taking the supremum gives
	\begin{equation*}
		\int_{0}^{\infty}L(x(\tau),\eta)-L(\xi,\lambda(\tau))\,\mathrm{d}\tau<+
		\infty.\qedhere
	\end{equation*}
\end{proof}
\begin{remark}
	Proposition \ref{sec2:pr:lagrangian}{\it (i)} asserts that the solutions of \eqref{sec2:arrowhurwicz} remain bounded whenever the set $S\times M$ is non-empty. Conversely, it can be shown that the set $S\times M$ is non-empty whenever \eqref{sec2:arrowhurwicz} admits a bounded solution; cf. Pazy \cite{AP:79}.
\end{remark}

Let us next focus on the asymptotic properties of the solutions of \eqref{sec2:arrowhurwicz}. To this end, define the Ces\`{a}ro average $(\sigma,\omega):(0,+\infty)\to X\times Y$ of a solution of \eqref{sec2:arrowhurwicz} by
\begin{equation*}
	(\sigma(t),\omega(t))=\frac{1}{t}\int_{0}^{t}(x(\tau),\lambda(\tau))\,
	\mathrm{d}\tau.
\end{equation*}
The following result asserts the weak convergence of the Ces\`{a}ro average of a solution of \eqref{sec2:arrowhurwicz} and further provides an estimate on the decay of the primal-dual gap function.
\begin{proposition}\label{sec2:pr:weakergodic}
	Let $S\times M$ be non-empty and let $(\sigma,\omega):(0,+\infty)\to X\times Y$ be the Ces\`{a}ro average of a solution of \eqref{sec2:arrowhurwicz}. Then, for any $(\xi,\eta)\in S\times M$, it holds that
	\begin{equation*}
		L(\sigma(t),\eta)-L(\xi,\omega(t))=\OO\Big(\frac{1}{t}\Big)\
		\text{as $t\to+\infty$}.
	\end{equation*}
	Moreover, there exists $(\bar{x},\bar{\lambda})\in S\times M$ such that $(\sigma(t),\omega(t))\rightharpoonup(\bar{x},\bar{\lambda})$ weakly in $X\times Y$ as $t\to+\infty$.
\end{proposition}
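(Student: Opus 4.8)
The plan is to prove the two assertions by separate mechanisms: the rate estimate will come from the convexity-concavity of $L$ combined with the integrability bound already established in Proposition~\ref{sec2:pr:lagrangian}, while the weak convergence will be obtained by an Opial-type argument built on the monotonicity of $T$ and the non-expansiveness recorded in Proposition~\ref{sec2:pr:lagrangian}{\it(i)}.

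For the $\OO(1/t)$ estimate, the starting observation is that $x\mapsto L(x,\eta)$ is convex (as $f$ is convex and the coupling term is affine), whereas $\lambda\mapsto L(\xi,\lambda)$ is affine. Jensen's inequality applied to $\sigma(t)=\frac1t\int_0^t x(\tau)\,\mathrm{d}\tau$ then gives $L(\sigma(t),\eta)\leq\frac1t\int_0^t L(x(\tau),\eta)\,\mathrm{d}\tau$, while the affine dependence on $\lambda$ yields the exact identity $L(\xi,\omega(t))=\frac1t\int_0^t L(\xi,\lambda(\tau))\,\mathrm{d}\tau$. Subtracting, the gap at the average is dominated by the time average of the pointwise gap, and the integrated inequality \eqref{sec2:lagdecint} bounds $\int_0^t L(x(\tau),\eta)-L(\xi,\lambda(\tau))\,\mathrm{d}\tau$ by $\tfrac12\norm{(x_0,\lambda_0)-(\xi,\eta)}^2$ once the nonnegative terminal term is discarded. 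Dividing by $t$ produces the claimed bound; the matching lower bound $L(\sigma(t),\eta)-L(\xi,\omega(t))\geq0$ follows directly from the saddle-point inequalities for $(\xi,\eta)$.

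For the weak convergence I would first note that $(\sigma(t),\omega(t))$ is bounded, since Proposition~\ref{sec2:pr:lagrangian}{\it(i)} forces $(x(t),\lambda(t))$ to be bounded and averaging preserves this. To pin down the weak cluster points, I would combine \eqref{sec2:lagrangianbound} with the monotonicity of $T$ to obtain, for every $(y,\zeta)\in X\times Y$, the inequality $\iprod{T(x(\tau),\lambda(\tau))}{(x(\tau),\lambda(\tau))-(y,\zeta)}\geq\iprod{T(y,\zeta)}{(x(\tau),\lambda(\tau))-(y,\zeta)}$; inserting the dynamics $(\dot x,\dot\lambda)=-T(x,\lambda)$ rewrites the left-hand side as $-\tfrac12\frac{\mathrm{d}}{\mathrm{d}\tau}\norm{(x(\tau),\lambda(\tau))-(y,\zeta)}^2$. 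Integrating over $[0,t]$, dividing by $t$, and dropping the nonnegative terminal norm leaves $\iprod{T(y,\zeta)}{(\sigma(t),\omega(t))-(y,\zeta)}\leq\tfrac{1}{2t}\norm{(x_0,\lambda_0)-(y,\zeta)}^2$. Passing to a weak limit along any sequence $t_n\to+\infty$ with $(\sigma(t_n),\omega(t_n))\rightharpoonup(\bar x,\bar\lambda)$ yields $\iprod{T(y,\zeta)}{(\bar x,\bar\lambda)-(y,\zeta)}\leq0$ for all $(y,\zeta)$, whence the continuity of $T$ (a Minty argument, taking $(y,\zeta)=(\bar x,\bar\lambda)+s(u,v)$ and letting $s\downarrow0$) forces $T(\bar x,\bar\lambda)=0$, i.e. $(\bar x,\bar\lambda)\in S\times M$.

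The hard part will be the uniqueness of the weak cluster point, because the trajectory itself need not converge. The device I would exploit is that for any two elements $(\xi,\eta),(\xi',\eta')\in S\times M$, Proposition~\ref{sec2:pr:lagrangian}{\it(i)} makes both $\norm{(x(t),\lambda(t))-(\xi,\eta)}$ and $\norm{(x(t),\lambda(t))-(\xi',\eta')}$ convergent, so by polarization their difference of squares shows that $\iprod{(x(t),\lambda(t))}{(\xi,\eta)-(\xi',\eta')}$ admits a limit; since the Cesàro mean of a convergent function shares that limit, $\iprod{(\sigma(t),\omega(t))}{(\xi,\eta)-(\xi',\eta')}$ converges too. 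Taking now $(\xi,\eta)$ and $(\xi',\eta')$ to be two weak cluster points (each in $S\times M$ by the previous step) and evaluating this common limit along their defining sequences gives $\iprod{(\xi,\eta)}{(\xi,\eta)-(\xi',\eta')}=\iprod{(\xi',\eta')}{(\xi,\eta)-(\xi',\eta')}$, that is $\norm{(\xi,\eta)-(\xi',\eta')}^2=0$. Hence the cluster point is unique, and a bounded family in a Hilbert space with a single weak cluster point converges weakly, delivering $(\sigma(t),\omega(t))\rightharpoonup(\bar x,\bar\lambda)\in S\times M$.
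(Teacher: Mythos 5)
Your proof is correct, and its first half is exactly the paper's argument: integrate \eqref{sec2:lagdec} to get \eqref{sec2:lagdecint}, discard the nonnegative terminal term, divide by $t$, and apply Jensen's inequality to the convex functions $L(\,\cdot\,,\eta)$ and $-L(\xi,\,\cdot\,)$ (your observation that the gap is also nonnegative, so the bound is two-sided, is implicit in the paper via Proposition~\ref{sec2:pr:lagrangian}). Where you genuinely diverge is the weak convergence: the paper disposes of it in one line by invoking the Opial--Passty lemma applied to $S\times M$, whereas you prove that lemma from scratch in this setting. Your three steps --- (a) locating weak cluster points of the averages in $S\times M$ by integrating the monotonicity inequality $\iprod{T(x,\lambda)}{(x,\lambda)-(y,\zeta)}\geq\iprod{T(y,\zeta)}{(x,\lambda)-(y,\zeta)}$, dividing by $t$, and running Minty's argument with the continuity of $T$; (b) using the existence of $\lim_{t\to+\infty}\norm{(x(t),\lambda(t))-(\xi,\eta)}$ from Proposition~\ref{sec2:pr:lagrangian}{\it(i)} plus polarization and the fact that Ces\`aro means inherit limits to show any two cluster points coincide; (c) concluding by weak sequential compactness of bounded sets --- are precisely the content of Passty's ergodic lemma, correctly executed. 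What the citation buys the paper is brevity; what your unpacking buys is self-containedness and transparency about where the structural hypotheses enter: the cluster-point step is the only place the monotonicity and continuity (hence maximal monotonicity) of $T$ are used, and the uniqueness step uses only the Fej\'er-type property from Proposition~\ref{sec2:pr:lagrangian}{\it(i)}. One could shorten your step (a) slightly by quoting \eqref{sec2:lagrangianbound} anchored at $(y,\zeta)$ instead of re-deriving the monotonicity estimate, but as written it is sound.
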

\begin{proof}
	Let $(\xi,\eta)\in S\times M$ and recall from \eqref{sec2:lagdecint} that for any $t\geq0$,
	\begin{align*}
		\normX{x(t)-\xi}^{2}/2+\normY{\lambda(t)-\eta}^{2}/2&+
		\int_{0}^{t}L(x(\tau),\eta)-L(\xi,\lambda(\tau))\,\mathrm{d}\tau\\
		\leq\normX{x(0)-\xi}^{2}/2&+\normY{\lambda(0)-\eta}^{2}/2.
	\end{align*}
	Dividing the above inequality by $t>0$ and taking into account that $\norm{(x(t),\lambda(t))-(\xi,\eta)}^{2}\geq0$, we obtain
	\begin{equation*}
		\frac{1}{t}\int_{0}^{t}L(x(\tau),\eta)-L(\xi,\lambda(\tau))\,\mathrm{d}\tau
		\leq\frac{1}{2t}\big(\normX{x(0)-\xi}^{2}+\normY{\lambda(0)-\eta}^{2}\big).
	\end{equation*}
	By Jensen's inequality, as $L(\,\cdot\,,\eta)$ and $-L(\xi,\,\cdot\,)$ are both convex, it follows
	\begin{equation*}
		L(\sigma(t),\eta)-L(\xi,\omega(t))\leq
		\frac{1}{2t}\big(\normX{x(0)-\xi}^{2}+\normY{\lambda(0)-\eta}^{2}\big)
	\end{equation*}
	and thus,
	\begin{equation*}
		\limsup_{t\to+\infty}\,t\big(L(\sigma(t),\eta)-L(\xi,\omega(t))\big)
		<+\infty.
	\end{equation*}

	The weak convergence of $(\sigma(t),\omega(t))$ as $t\to+\infty$ is an immediate consequence of the Opial--Passty lemma applied to the set $S\times M$; cf. Passty \cite{GBP:79}.
\end{proof}
\begin{remark}
	We note that the weak ergodic convergence of the solutions of \eqref{sec1:arrowhurwicz} may also be deduced from the general theory for semi-groups of contractions generated by maximally monotone operators; cf. Baillon and Br{\'e}zis \cite{JBB-HB:76} (see also Br{\'e}zis \cite{HB:78} for a localization result of the weak limit).
\end{remark}

\section{The strongly monotone case}\label{sec3}
In this section, we investigate the asymptotic properties of the solutions of \eqref{sec2:arrowhurwicz} under the more stringent assumptions that
\begin{enumerate}[\hspace{6pt}({A}1)]\setcounter{enumi}{3}
	\item $\nabla f:X\to X$ is $\alpha$-strongly monotone, i.e.,
	\begin{equation*}
		\exists\alpha>0\ \forall x,y\in X\quad
		\iprodX{\nabla f(x)-\nabla f(y)}{x-y}\geq\alpha\normX{x-y}^{2};
	\end{equation*}
	\item $S\times M\subset X\times Y$ is non-empty.
\end{enumerate}
We recall that the latter assumption is verified whenever \eqref{sec1:cvxproblem} admits a minimizer and, for instance, the constraint qualification
\begin{equation*}
	b\in\sri A(X)
\end{equation*}
holds; see, e.g., Bauschke and Combettes \cite{HHB-PLC:17}. In turn, the former assumption implies that $S$ is reduced to a singleton. 

\subsection{Weak convergence}
Let us begin with a result on the weak convergence of the solutions of \eqref{sec1:arrowhurwicz}. Since $f$ is $\alpha$-strongly convex ($\nabla f$ being $\alpha$-strongly monotone), we have for any $(x,\lambda),(y,\eta)\in X\times Y$,
\begin{equation}\label{sec3:lagrangianbound}
	\iprod{T(x,\lambda)}{(x,\lambda)-(y,\eta)}\geq L(x,\eta)-L(y,\lambda)+
	\alpha\normX{x-y}^{2}/2.
\end{equation}
Utilizing this inequality relative to the set $S\times M$ gives the following asymptotic properties of the solutions of \eqref{sec2:arrowhurwicz} and the primal-dual gap function.
\begin{theorem}\label{sec3:th:weakconv}
	Let $\nabla f:X\to X$ be $\alpha$-strongly monotone, let $S\times M$ be non-empty, and let $(x,\lambda):[0,+\infty)\to X\times Y$ be a solution of \eqref{sec2:arrowhurwicz}. Then, for any $(\xi,\eta)\in S\times M$, it holds that
	\begin{align*}
		L(x(t),\eta)-L(\xi,\lambda(t))&=o\Big(\frac{1}{\sqrt{t}}\Big)\
		\text{as $t\to+\infty$};\\
		\norm{(\dot{x}(t),\dot{\lambda}(t))}&=o\Big(\frac{1}{\sqrt{t}}\Big)\
		\text{as $t\to+\infty$}.
	\end{align*}
	Moreover, there exists $(\bar{x},\bar{\lambda})\in S\times M$ such that $(x(t),\lambda(t))\rightharpoonup(\bar{x},\bar{\lambda})$ weakly in $X\times Y$ as $t\to+\infty$.
\end{theorem}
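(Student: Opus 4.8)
The plan is to reduce all three assertions to the single integrability statement
\[
	\int_{0}^{\infty}\norm{(\dot{x}(\tau),\dot{\lambda}(\tau))}^{2}\,\mathrm{d}\tau<+\infty,
\]
and then to exploit that $t\mapsto\norm{(\dot{x}(t),\dot{\lambda}(t))}$ is non-increasing by Theorem \ref{sec2:th:existence}\,{\it (i)}. Throughout, fix $(\xi,\eta)\in S\times M$ and recall the optimality conditions $A\xi=b$ and $\nabla f(\xi)=-A^{\ast}\eta$, together with the fact that $T(x,\lambda)=-(\dot{x},\dot{\lambda})$ along a solution, so that $\norm{T(x,\lambda)}=\norm{(\dot{x},\dot{\lambda})}$.

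First I would revisit the energy $h(t)=\norm{(x(t),\lambda(t))-(\xi,\eta)}^{2}/2$. Differentiating along \eqref{sec2:arrowhurwicz} and eliminating the dual cross-term by means of $A\xi=b$ and $\nabla f(\xi)=-A^{\ast}\eta$, one obtains the clean identity $\dot{h}(t)=-\iprodX{\nabla f(x(t))-\nabla f(\xi)}{x(t)-\xi}$, whence $\alpha$-strong monotonicity gives $\dot{h}(t)\leq-\alpha\normX{x(t)-\xi}^{2}$. Since $h$ is non-increasing and bounded below, integration yields $\int_{0}^{\infty}\normX{x(\tau)-\xi}^{2}\,\mathrm{d}\tau<+\infty$. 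Because $\dot{\lambda}=Ax-b=A(x-\xi)$, this immediately gives $\int_{0}^{\infty}\normY{\dot{\lambda}(\tau)}^{2}\,\mathrm{d}\tau<+\infty$ as well.

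The crux — and the step I expect to be the main obstacle — is to upgrade this to the integrability of $\normX{\dot{x}}^{2}$, for the naive splitting $\dot{x}=-(\nabla f(x)-\nabla f(\xi))-A^{\ast}(\lambda-\eta)$ offers no control on $\int_{0}^{\infty}\normY{\lambda-\eta}^{2}$. The idea is to test the first equation of \eqref{sec2:arrowhurwicz} against $\dot{x}$ itself and to recognize the resulting terms as total derivatives. Writing $g(t)=L(x(t),\eta)-L(\xi,\lambda(t))$, which one checks equals the Bregman distance $D_{f}(x(t),\xi)$, and using $A\dot{x}=\ddot{\lambda}$, one arrives at the pointwise identity
\[
	\normX{\dot{x}}^{2}=-\frac{\mathrm{d}}{\mathrm{d}t}g-\frac{\mathrm{d}}{\mathrm{d}t}\iprodY{\dot{\lambda}}{\lambda-\eta}+\normY{\dot{\lambda}}^{2}.
\]
Integrating over $[0,t]$ and observing that $g\geq0$, that $\iprodY{\dot{\lambda}}{\lambda-\eta}$ stays bounded (since $\lambda-\eta$ is bounded by Proposition \ref{sec2:pr:lagrangian}\,{\it (i)} and $\dot{\lambda}$ is bounded by Theorem \ref{sec2:th:existence}\,{\it (i)}), and that $\int_{0}^{t}\normY{\dot{\lambda}}^{2}$ is bounded, shows that $\int_{0}^{t}\normX{\dot{x}}^{2}$ is bounded above uniformly in $t$. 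Combined with the previous paragraph, this establishes $\int_{0}^{\infty}\norm{(\dot{x},\dot{\lambda})}^{2}\,\mathrm{d}\tau<+\infty$.

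With integrability in hand the conclusions follow quickly. Since $\norm{(\dot{x},\dot{\lambda})}^{2}$ is non-increasing and integrable, the bound $\tfrac{t}{2}\norm{(\dot{x}(t),\dot{\lambda}(t))}^{2}\leq\int_{t/2}^{t}\norm{(\dot{x}(\tau),\dot{\lambda}(\tau))}^{2}\,\mathrm{d}\tau\to0$ forces $\norm{(\dot{x}(t),\dot{\lambda}(t))}=o(1/\sqrt{t})$, which is the second estimate. For the gap, inequality \eqref{sec2:lagrangianbound} anchored at $(\xi,\eta)$ together with the Cauchy--Schwarz inequality gives $0\leq g(t)\leq\iprod{T(x(t),\lambda(t))}{(x(t),\lambda(t))-(\xi,\eta)}\leq\norm{(\dot{x}(t),\dot{\lambda}(t))}\,\norm{(x(t),\lambda(t))-(\xi,\eta)}$; as the second factor converges by Proposition \ref{sec2:pr:lagrangian}\,{\it (i)}, this yields $g(t)=o(1/\sqrt{t})$. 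Finally, weak convergence follows from the Opial lemma: its first hypothesis is precisely Proposition \ref{sec2:pr:lagrangian}\,{\it (i)}, while for the second I would use that $T(x(t),\lambda(t))=-(\dot{x}(t),\dot{\lambda}(t))\to0_{X\times Y}$ strongly, so that every weak sequential limit point of $(x(t),\lambda(t))$ is a zero of the maximally monotone operator $T$ by demiclosedness of its graph, hence belongs to $S\times M$. This produces a single weak limit $(\bar{x},\bar{\lambda})\in S\times M$, as claimed.
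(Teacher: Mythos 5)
Your proof is correct, and its central step is genuinely different from the paper's. Both arguments funnel everything through the integrability $\int_{0}^{\infty}\norm{(\dot{x}(\tau),\dot{\lambda}(\tau))}^{2}\,\mathrm{d}\tau<+\infty$ and then finish the same way: the monotone-tail argument for the $o(1/\sqrt{t})$ velocity decay, inequality \eqref{sec2:lagrangianbound} plus Cauchy--Schwarz and boundedness of the trajectory for the gap, and Opial's lemma for weak convergence (your use of the weak--strong closedness of the graph of $T$ to verify the cluster-point condition is exactly what the paper's remark after the theorem suggests). The difference lies in how integrability is obtained. The paper integrates the Lagrangian identity \eqref{sec1:lagidentity} along the flow and couples it with a strengthened velocity estimate --- inequality \eqref{sec3:strongmono}, which applies strong monotonicity at the level of $(\dot{x},\dot{\lambda})$ --- together with the Bregman lower bound \eqref{sec3:lagdifequality} and an integrating-factor argument (multiplication by $\mathrm{e}^{\alpha t}$). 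You instead anchor the energy at the saddle point, invoke strong monotonicity only once and only at the state level to get $\int_{0}^{\infty}\normX{x(\tau)-\xi}^{2}\,\mathrm{d}\tau<+\infty$, read off $\int_{0}^{\infty}\normY{\dot{\lambda}(\tau)}^{2}\,\mathrm{d}\tau<+\infty$ from $\dot{\lambda}=A(x-\xi)$, and then get $\int_{0}^{\infty}\normX{\dot{x}(\tau)}^{2}\,\mathrm{d}\tau<+\infty$ from your integration-by-parts identity; note that this identity is nothing but the Lagrangian identity re-anchored at $(\xi,\eta)$, since $L(x(t),\lambda(t))=g(t)+\iprodY{\lambda(t)-\eta}{\dot{\lambda}(t)}+L(\xi,\eta)$. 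What each approach buys: yours is more elementary --- it avoids both the velocity-level strong-monotonicity estimate and the Gronwall-type manipulation, and it isolates precisely where strong monotonicity enters --- whereas the paper's route produces the intermediate estimate \eqref{sec3:strongmono} as a by-product, which is reused later in the proof of Proposition \ref{sec3:pr:strongconv}.
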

\begin{proof}
	Let $(\xi,\eta)\in S\times M$. Using the `Lagrangian identity' \eqref{sec1:lagidentity}, we have for any $t\geq0$,
	\begin{equation*}
		\frac{\mathrm{d}}{\mathrm{d}t}\big(L(\xi,\eta)-L(x(t),\lambda(t))\big)+
		\normX{\dot{x}(t)}^{2}+\normY{\dot{\lambda}(t)}^{2}=2\normX{\dot{x}(t)}^{2}.
	\end{equation*}
	Integration over $[0,t]$ yields
	\begin{align*}
		L(\xi,\eta)-L(x(t),\lambda(t))&+\int_{0}^{t}\normX{\dot{x}(\tau)}^{2}+
		\normY{\dot{\lambda}(\tau)}^{2}\,\mathrm{d}\tau\\
		=2\int_{0}^{t}\normX{\dot{x}(\tau)}^{2}\,\mathrm{d}\tau
		&+L(\xi,\eta)-L(x(0),\lambda(0)).
	\end{align*}
	Since $\nabla f$ is $\alpha$-strongly monotone, we readily deduce from Theorem \ref{sec2:th:existence}{\it (i)} that
	\begin{equation}\label{sec3:strongmono}
		\normX{\dot{x}(t)}^{2}/2+\normY{\dot{\lambda}(t)}^{2}/2
		+\alpha\int_{0}^{t}\normX{\dot{x}(\tau)}^{2}\,\mathrm{d}\tau
		\leq\normX{\dot{x}(0)}^{2}/2+\normY{\dot{\lambda}(0)}^{2}/2.
	\end{equation}
	Using this inequality together with the above equation, we obtain
	\begin{equation*}
		L(\xi,\eta)-L(x(t),\lambda(t))+\normX{\dot{x}(t)}^{2}/\alpha+
		\normY{\dot{\lambda}(t)}^{2}/\alpha+\int_{0}^{t}\normX{\dot{x}(\tau)}^{2}+
		\normY{\dot{\lambda}(\tau)}^{2}\,\mathrm{d}\tau\leq C,
	\end{equation*}
	where $C=L(\xi,\eta)-L(x(0),\lambda(0))+\normX{\dot{x}(0)}^{2}/\alpha+\normY{\dot{\lambda}(0)}^{2}/\alpha$. Moreover, from \eqref{sec2:arrowhurwicz} and the fact that $T(\xi,\eta)=(0_{X},0_{Y})$, we infer
	\begin{equation}
		\begin{split}\label{sec3:lagdifequality}
			L(\xi,\eta)-L(x(t),\lambda(t))&=f(\xi)-f(x(t))+
			\iprodX{\dot{x}(t)+\nabla f(x(t))}{x(t)-\xi}\\
			&=\frac{\mathrm{d}}{\mathrm{d}t}\normX{x(t)-\xi}^{2}/2+D_{f}(\xi,x(t))
		\end{split}
	\end{equation}
	with $D_{f}$ denoting the Bregman distance associated with $f$. Since $f$ is $\alpha$-strongly convex, we have $D_{f}(\xi,x(t))\geq\alpha\normX{x(t)-\xi}^{2}/2$ and thus,
	\begin{equation*}
		L(\xi,\eta)-L(x(t),\lambda(t))\geq\frac{\mathrm{d}}{\mathrm{d}t}
		\normX{x(t)-\xi}^{2}/2+\alpha\normX{x(t)-\xi}^{2}/2.
	\end{equation*}
	Using this inequality together with the fact that
	\begin{equation*}
		\alpha\normX{x(t)-\xi}^{2}/2-\alpha\normX{x(0)-\xi}^{2}/2=
		\alpha\int_{0}^{t}\frac{\mathrm{d}}{\mathrm{d}\tau}
		\normX{x(\tau)-\xi}^{2}/2\,\mathrm{d}\tau,
	\end{equation*}
	we obtain
	\begin{align*}
		L(\xi,\eta)-L(x(t),\lambda(t))&\geq\frac{\mathrm{d}}{\mathrm{d}t}
		\normX{x(t)-\xi}^{2}/2+\alpha\int_{0}^{t}
		\frac{\mathrm{d}}{\mathrm{d}\tau}\normX{x(\tau)-\xi}^{2}/2\,\mathrm{d}\tau\\
		&\quad+\alpha\normX{x(0)-\xi}^{2}/2.
	\end{align*}
	In view of the above derivations, we deduce
	\begin{align*}
		\frac{\mathrm{d}}{\mathrm{d}t}\normX{x(t)-\xi}^{2}/2+
		\alpha\int_{0}^{t}\frac{\mathrm{d}}{\mathrm{d}\tau}
		\normX{x(\tau)-\xi}^{2}/2\,\mathrm{d}\tau&+\normX{\dot{x}(t)}^{2}/\alpha+
		\normY{\dot{\lambda}(t)}^{2}/\alpha\\
		+\,\alpha\int_{0}^{t}\normX{\dot{x}(\tau)}^{2}/\alpha+
		\normY{\dot{\lambda}(\tau)}^{2}/\alpha\,\mathrm{d}\tau\leq
		C&-\alpha\normX{x(0)-\xi}^{2}/2.
	\end{align*}
	Multiplying the above inequality by $\mathrm{e}^{\alpha t}$ and integrating over $[0,t]$ yields
	\begin{equation*}
		\alpha\normX{x(t)-\xi}^{2}/2+\int_{0}^{t}\normX{\dot{x}(\tau)}^{2}+
		\normY{\dot{\lambda}(\tau)}^{2}\,\mathrm{d}\tau\leq\tilde{C}
	\end{equation*}
	for some sufficiently large constant $\tilde{C}$. Taking into account that $\normX{x(t)-\xi}^{2}\geq0$ and subsequently passing to the limit as $t\to+\infty$ gives
	\begin{equation*}
		\int_{0}^{\infty}\normX{\dot{x}(\tau)}^{2}+
		\normY{\dot{\lambda}(\tau)}^{2}\,\mathrm{d}\tau<+\infty.
	\end{equation*}
	Moreover, since $t\mapsto\norm{(\dot{x}(t),\dot{\lambda}(t))}^{2}$ is non-increasing on $[0,+\infty)$, cf. Theorem \ref{sec2:th:existence}{\it (i)}, we have for any $t\geq0$,
	\begin{equation*}
		\int_{t/2}^{t}\normX{\dot{x}(\tau)}^{2}+\normY{\dot{\lambda}(\tau)}^{2}\,
		\mathrm{d}\tau\geq t\big(\normX{\dot{x}(t)}^{2}+
		\normY{\dot{\lambda}(t)}^{2}\big)/2.
	\end{equation*}
	Observing that $\norm{(\dot{x},\dot{\lambda})}^{2}$ belongs to $\LL^{1}([0,+\infty);\rl)$ entails
	\begin{equation*}
		\lim_{t\to+\infty}t\big(\normX{\dot{x}(t)}^{2}+\normY{\dot{\lambda}(t)}^{2}
		\big)=0.
	\end{equation*}
	Finally, from inequality \eqref{sec3:lagrangianbound}, we have for any $t\geq0$,
	\begin{equation*}
		L(x(t),\eta)-L(\xi,\lambda(t))+\alpha\normX{x(t)-\xi}^{2}/2
		\leq\iprod{T(x(t),\lambda(t))}{(x(t),\lambda(t))-(\xi,\eta)}.
	\end{equation*}
	In view of \eqref{sec2:arrowhurwicz} and the Cauchy--Schwarz inequality, we get
	\begin{equation}\label{sec3:cauchyschwarz}
		L(x(t),\eta)-L(\xi,\lambda(t))+\alpha\normX{x(t)-\xi}^{2}/2
		\leq\norm{(x(t),\lambda(t))-(\xi,\eta)}\norm{(\dot{x}(t),\dot{\lambda}(t))}.
	\end{equation}
	Using that $(x,\lambda)$ remains bounded on $[0,+\infty)$, cf. Proposition \ref{sec2:pr:lagrangian}{\it (i)}, there exists $\hat{C}\geq0$ such that
	\begin{equation*}
		L(x(t),\eta)-L(\xi,\lambda(t))+\alpha\normX{x(t)-\xi}^{2}/2\leq
		\hat{C}\norm{(\dot{x}(t),\dot{\lambda}(t))}.
	\end{equation*}
	Multiplying the above inequality by $\sqrt{t}$ gives
	\begin{equation*}
		\sqrt{t}\big(L(x(t),\eta)-L(\xi,\lambda(t))+\alpha\normX{x(t)-\xi}^{2}/2
		\big)\leq\hat{C}\sqrt{t}\norm{(\dot{x}(t),\dot{\lambda}(t))}.
	\end{equation*}
	Observing that $\lim_{t\to+\infty}\sqrt{t}\norm{(\dot{x}(t),\dot{\lambda}(t))}=0$, passing to the limit as $t\to+\infty$ yields
	\begin{equation*}
		\lim_{t\to+\infty}\sqrt{t}\big(L(x(t),\eta)-L(\xi,\lambda(t))+
		\alpha\normX{x(t)-\xi}^{2}/2\big)=0.
	\end{equation*}

	The weak convergence of $(x(t),\lambda(t))$ as $t\to+\infty$ is an immediate consequence of the Opial lemma applied to the set $S\times M$; cf. Opial \cite{ZO:67}.
\end{proof}
\begin{remark}
	We note that the weak convergence of the solutions of \eqref{sec1:arrowhurwicz} may also be deduced from the graph closedness property of the maximally monotone operator $T$ with respect to the weak-strong topology; see, e.g., Bauschke and Combettes \cite{HHB-PLC:17}. Yet another tool to establish the weak convergence of the solutions of \eqref{sec1:arrowhurwicz} is the concept of demipositivity, first developed by Bruck \cite{REB:75} for monotone operators, and later extended by Chbani and Riahi \cite{ZC-HR:14} to monotone bifunctions. However, the maximally monotone operator $T$ associated with the Lagrangian $L$ of the convex minimization problem \eqref{sec1:cvxproblem} need, in general, not be demipositive. We leave the details to the reader.
\end{remark}

To further localize the weak limit of a solution of \eqref{sec2:arrowhurwicz}, recall that the set $S\times M$ (if non-empty) is of the form $\{\bar{x}\}\times M$, where $\bar{x}\in X$ is the unique minimizer of \eqref{sec1:cvxproblem} and $M\subset Y$ refers to the closed affine subspace of Lagrange multipliers, viz.,
\begin{equation*}
	M=\{\bar{\lambda}\in Y\mid \nabla f(\bar{x})+A^{\ast}\bar{\lambda}=0_{X}\}.
\end{equation*}
The following result characterizes the weak limit of a solution of \eqref{sec2:arrowhurwicz} as the orthogonal projection of its initial data onto the (closed and convex) set $S\times M$.
\begin{corollary}
	Under the hypotheses of Theorem \ref{sec3:th:weakconv}, let $(\bar{x},\bar{\lambda})\in S\times M$ be such that $(x(t),\lambda(t))\rightharpoonup(\bar{x},\bar{\lambda})$ weakly in $X\times Y$ as $t\to+\infty$. Then, $(\bar{x},\bar{\lambda})=\proj_{S\times M}(x_{0},\lambda_{0})$.
\end{corollary}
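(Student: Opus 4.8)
The plan is to pin down $\proj_{S\times M}(x_0,\lambda_0)$ explicitly and then verify the single orthogonality relation that characterizes it. Since $\nabla f$ is $\alpha$-strongly monotone, the primal set $S=\{\bar{x}\}$ is a singleton, so that $S\times M=\{\bar{x}\}\times M$ with $M=\{\lambda\in Y\mid\nabla f(\bar{x})+A^{\ast}\lambda=0_{X}\}$ a closed affine subspace whose parallel (direction) subspace is $\ker A^{\ast}$, as $\lambda_{1},\lambda_{2}\in M$ forces $A^{\ast}(\lambda_{1}-\lambda_{2})=0_{X}$. By the product structure one has $\proj_{S\times M}(x_{0},\lambda_{0})=(\bar{x},\proj_{M}\lambda_{0})$, so the weak limit $(\bar{x},\bar{\lambda})$ coincides with this projection exactly when $\bar{\lambda}=\proj_{M}\lambda_{0}$. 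By the projection theorem on the closed affine subspace $M$, and recalling $(\ker A^{\ast})^{\perp}=\overline{\operatorname{ran}A}$, this reduces to showing the orthogonality relation $\lambda_{0}-\bar{\lambda}\in(\ker A^{\ast})^{\perp}$. Note that the $x$-component imposes no constraint: the direction subspace of $S\times M$ is $\{0_{X}\}\times\ker A^{\ast}$, so its orthogonal complement $X\times(\ker A^{\ast})^{\perp}$ leaves $x_{0}-\bar{x}$ entirely free.

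The crux is an invariance property: the dual velocity remains in a fixed closed subspace along the entire evolution. Using that the minimizer $\bar{x}$ is feasible, i.e.\ $A\bar{x}-b=0_{Y}$, the second line of \eqref{sec2:arrowhurwicz} gives $\dot{\lambda}(t)=Ax(t)-b=A(x(t)-\bar{x})$, whence $\dot{\lambda}(t)\in\operatorname{ran}A\subseteq(\ker A^{\ast})^{\perp}$ for every $t\geq0$. Integrating, $\lambda(t)-\lambda_{0}=\int_{0}^{t}\dot{\lambda}(\tau)\,\mathrm{d}\tau$; since $(\ker A^{\ast})^{\perp}$ is a closed linear subspace it is stable under the (Riemann) integral of a continuous $(\ker A^{\ast})^{\perp}$-valued curve, and therefore $\lambda(t)-\lambda_{0}\in(\ker A^{\ast})^{\perp}$ for all $t\geq0$.

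It remains to pass to the weak limit. By Theorem \ref{sec3:th:weakconv} we have $\lambda(t)\rightharpoonup\bar{\lambda}$, hence $\lambda(t)-\lambda_{0}\rightharpoonup\bar{\lambda}-\lambda_{0}$; being closed and convex, $(\ker A^{\ast})^{\perp}$ is weakly closed, so $\bar{\lambda}-\lambda_{0}\in(\ker A^{\ast})^{\perp}$, that is, $\lambda_{0}-\bar{\lambda}\in(\ker A^{\ast})^{\perp}$. Together with $\bar{\lambda}\in M$ this is precisely the variational characterization of $\proj_{M}\lambda_{0}$, and we conclude $(\bar{x},\bar{\lambda})=\proj_{S\times M}(x_{0},\lambda_{0})$.

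The only genuine step beyond bookkeeping is the invariance $\lambda(t)-\lambda_{0}\in(\ker A^{\ast})^{\perp}$, and this is where I would focus: everything hinges on recognizing that the dual velocity lives in $\operatorname{ran}A$ (using feasibility of $\bar{x}$, so that $b\in\operatorname{ran}A$) and that this closed subspace is preserved both under integration in time and under the weak limit. By contrast, the apparently delicate identification of the limit as a projection collapses, thanks to the affine product structure $S\times M=\{\bar{x}\}\times M$, to this one orthogonality relation in the dual variable.
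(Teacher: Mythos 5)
Your proof is correct, and it rests on the same underlying mechanism as the paper's --- an orthogonality between the velocity and the direction space of $S\times M$ that survives integration in time and passage to the weak limit, concluded by the projection theorem --- but the two arguments are organized genuinely differently. The paper never splits the product: for an arbitrary $(\xi,\eta)\in S\times M$ it uses $(\dot{x}(t),\dot{\lambda}(t))=-\big(T(x(t),\lambda(t))-T(\xi,\eta)\big)$, computes the pairing $\iprod{T(x(t),\lambda(t))-T(\xi,\eta)}{(\bar{x},\bar{\lambda})-(\xi,\eta)}$, and observes that it collapses to two $\nabla f$-difference terms which vanish because $S$ is a singleton; hence $\iprod{(\bar{x},\bar{\lambda})-(\xi,\eta)}{(\dot{x}(t),\dot{\lambda}(t))}=0$, and integrating and letting $t\to+\infty$ yields $\iprod{(\bar{x},\bar{\lambda})-(\xi,\eta)}{(\bar{x},\bar{\lambda})-(x_{0},\lambda_{0})}=0$ for every $(\xi,\eta)\in S\times M$, which is the variational characterization of the projection onto the closed convex set $S\times M$. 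You instead exploit the product structure $S\times M=\{\bar{x}\}\times M$ to reduce the claim to the dual variable alone, and you make the conservation law explicit: $\dot{\lambda}(t)=A(x(t)-\bar{x})\in\operatorname{ran}A\subseteq(\ker A^{\ast})^{\perp}$, so $\lambda(t)-\lambda_{0}$ remains in the weakly closed subspace $(\ker A^{\ast})^{\perp}$. The two key facts are the same in disguise: the paper's vanishing $\nabla f$-terms encode precisely your statement that the direction space of $M$ is $\ker A^{\ast}$, since $A^{\ast}(\bar{\lambda}-\eta)=\nabla f(\xi)-\nabla f(\bar{x})=0_{X}$ once $\bar{x}=\xi$. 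What your route buys is concreteness and economy: the invariant subspace $\overline{\operatorname{ran}A}$ is read off directly from the second equation of \eqref{sec2:arrowhurwicz} together with feasibility of $\bar{x}$, with no computation involving the operator $T$. What the paper's route buys is that it stays at the level of the abstract projection characterization on the closed convex set $S\times M$ --- it never needs to identify the direction space of $M$ nor invoke $(\ker A^{\ast})^{\perp}=\overline{\operatorname{ran}A}$ --- which keeps the argument within the monotone-operator framework used throughout and lets it transcribe unchanged to, e.g., the structured setting of Section~\ref{sec5}.
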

\begin{proof}
	Let $(\bar{x},\bar{\lambda})\in S\times M$ be such that $(x(t),\lambda(t))\rightharpoonup(\bar{x},\bar{\lambda})$ weakly in $X\times Y$ as $t\to+\infty$~and let $(\xi,\eta)\in S\times M$ be arbitrary. Using \eqref{sec2:arrowhurwicz} and the fact that $T(\xi,\eta)=(0_{X},0_{Y})$, we have for any $t\geq0$,
	\begin{equation*}
		\iprod{(\bar{x},\bar{\lambda})-(\xi,\eta)}{(\dot{x}(t),\dot{\lambda}(t))}+
		\iprod{T(x(t),\lambda(t))-T(\xi,\eta)}{(\bar{x},\bar{\lambda})-(\xi,\eta)}=
		0.
	\end{equation*}
	Observing that
	\begin{align*}
		\iprod{T(x(t),\lambda(t))-T(\xi,\eta)}{(\bar{x},\bar{\lambda})&-(\xi,\eta)}=
		\iprodX{\nabla f(x(t))-\nabla f(\xi)}{\bar{x}-\xi}\\
		+\,\iprodX{\nabla f(\bar{x})&-\nabla f(\xi)}{x(t)-\xi},
	\end{align*}
	and noticing that the right-hand side of the above equation vanishes (as $S$ is reduced to a singleton), it follows that
	\begin{equation*}
		\iprod{(\bar{x},\bar{\lambda})-(\xi,\eta)}{(\dot{x}(t),\dot{\lambda}(t))}=0.
	\end{equation*}
	Integration over $[0,t]$ yields
	\begin{equation*}
		\iprod{(\bar{x},\bar{\lambda})-(\xi,\eta)}
		{(x(t),\lambda(t))-(x(0),\lambda(0))}=0.
	\end{equation*}
	Since $(x(t),\lambda(t))\rightharpoonup(\bar{x},\bar{\lambda})$ weakly in $X\times Y$ as $t\to+\infty$, we infer
	\begin{equation*}
		\iprod{(\bar{x},\bar{\lambda})-(\xi,\eta)}{(\bar{x},\bar{\lambda})-
			(x(0),\lambda(0))}=0.
	\end{equation*}
	The above equality being true for any $(\xi,\eta)\in S\times M$, we conclude by virtue of the pro- jection theorem; see, e.g., Bauschke and Combettes \cite{HHB-PLC:17}.
\end{proof}

Finally, as an immediate consequence of Theorem \ref{sec3:th:weakconv}, we have the following refined asymptotic estimates whenever $A:X\to Y$ is bounded from below\footnote{%
	We recall that $A:X\to Y$ is bounded from below if and only if it is injective with closed range; see, e.g., Br{\'e}zis \cite{HB:11}.
}, i.e.,
\begin{equation*}\label{sec3:injective}
	\exists\beta>0\ \forall x\in X\quad\normY{Ax}^{2}\geq\beta\normX{x}^{2}.
\end{equation*}
\begin{corollary}\label{sec3:co:refinedgap}
	Under the hypotheses of Theorem \ref{sec3:th:weakconv}, let $A:X\to Y$ be bounded from below. Then, for any $(\xi,\eta)\in S\times M$, it holds that
	\begin{align*}
		L(x(t),\eta)-L(\xi,\lambda(t))&=o\Big(\frac{1}{t}\Big)\
		\text{as $t\to+\infty$};\\
		\normX{x(t)-\xi}&=o\Big(\frac{1}{\sqrt{t}}\Big)\ \text{as $t\to+\infty$}.
	\end{align*}
\end{corollary}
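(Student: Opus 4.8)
The plan is to refine the estimates of Theorem~\ref{sec3:th:weakconv} by squeezing everything out of the single inequality \eqref{sec3:cauchyschwarz},
\[
L(x(t),\eta)-L(\xi,\lambda(t))+\alpha\normX{x(t)-\xi}^{2}/2\leq\norm{(x(t),\lambda(t))-(\xi,\eta)}\,\norm{(\dot{x}(t),\dot{\lambda}(t))}.
\]
From Theorem~\ref{sec3:th:weakconv} I already have $\norm{(\dot{x}(t),\dot{\lambda}(t))}=o(1/\sqrt{t})$, whereas Proposition~\ref{sec2:pr:lagrangian}{\it (i)} only tells me the first factor on the right is bounded. The whole point is thus to upgrade that first factor to $\OO(\norm{(\dot{x}(t),\dot{\lambda}(t))})$: once $\norm{(x(t),\lambda(t))-(\xi,\eta)}=\OO(\norm{(\dot{x}(t),\dot{\lambda}(t))})$, the right-hand side is $o(1/t)$, which gives the gap estimate at once and, through the term $\alpha\normX{x(t)-\xi}^{2}/2$, the estimate $\normX{x(t)-\xi}=o(1/\sqrt{t})$. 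The new hypothesis that $A$ is bounded from below is exactly what enables this upgrade, by letting me control the dual component $\normY{\lambda(t)-\eta}$.

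First I would bound $\normY{\lambda(t)-\eta}$. Subtracting the optimality relation $\nabla f(\xi)+A^{\ast}\eta=0_{X}$ from the first line of \eqref{sec2:arrowhurwicz} gives $A^{\ast}(\lambda(t)-\eta)=-\dot{x}(t)-(\nabla f(x(t))-\nabla f(\xi))$; since the trajectory is bounded and $\nabla f$ is Lipschitz on bounded sets, this yields $\normX{A^{\ast}(\lambda(t)-\eta)}\leq\normX{\dot{x}(t)}+\ell\,\normX{x(t)-\xi}$ for some $\ell\geq0$. To convert this into a bound on $\normY{\lambda(t)-\eta}$ I need a lower bound on $A^{\ast}$: the hypothesis $\normY{Ax}^{2}\geq\beta\normX{x}^{2}$ (i.e. $A^{\ast}A\geq\beta\,\id$) gives $\normX{A^{\ast}\mu}^{2}\geq\beta\normY{\proj_{(\ker A^{\ast})^{\perp}}\mu}^{2}$, so $A^{\ast}$ is bounded below on $(\ker A^{\ast})^{\perp}$. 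The subtlety is that $A$ bounded from below need not make $A^{\ast}$ injective, so I must ensure $\lambda(t)-\eta\in(\ker A^{\ast})^{\perp}$. For this I use the second line of \eqref{sec2:arrowhurwicz}: as $\dot{\lambda}(t)=Ax(t)-b\in A(X)\subseteq(\ker A^{\ast})^{\perp}$, the component $\proj_{\ker A^{\ast}}\lambda(t)$ is conserved along the flow, equal to $\proj_{\ker A^{\ast}}\lambda_{0}$. Since $M$ is an affine subspace with direction $\ker A^{\ast}$, I may select $\eta\in M$ with $\proj_{\ker A^{\ast}}\eta=\proj_{\ker A^{\ast}}\lambda_{0}$; then $\lambda(t)-\eta\in(\ker A^{\ast})^{\perp}$ for all $t$, whence $\normY{\lambda(t)-\eta}\leq\beta^{-1/2}(\normX{\dot{x}(t)}+\ell\,\normX{x(t)-\xi})$.

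With this dual bound in hand, write $p(t)=\normX{x(t)-\xi}$ and $d(t)=\norm{(\dot{x}(t),\dot{\lambda}(t))}$. Combining the two components gives $\norm{(x(t),\lambda(t))-(\xi,\eta)}\leq C_{1}p(t)+C_{2}d(t)$ for suitable constants, and substituting this into \eqref{sec3:cauchyschwarz} and discarding the nonnegative gap leaves the quadratic inequality $\tfrac{\alpha}{2}p(t)^{2}\leq(C_{1}p(t)+C_{2}d(t))\,d(t)$. Solving it for $p(t)$ yields $p(t)\leq C_{3}\,d(t)$, that is $\normX{x(t)-\xi}=\OO(\norm{(\dot{x}(t),\dot{\lambda}(t))})=o(1/\sqrt{t})$, the second assertion. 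Back-substituting, $\normY{\lambda(t)-\eta}=o(1/\sqrt{t})$ as well, so $\norm{(x(t),\lambda(t))-(\xi,\eta)}=o(1/\sqrt{t})$, and \eqref{sec3:cauchyschwarz} then gives $L(x(t),\eta)-L(\xi,\lambda(t))=o(1/t)$. Both conclusions extend to every $(\xi,\eta)\in S\times M$: the minimizer $\xi=\bar{x}$ is unique by strong convexity, while the gap is independent of the choice of $\eta\in M$ because $A^{\ast}\eta=-\nabla f(\bar{x})$ is common to all such $\eta$.

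The step I expect to be most delicate is precisely this dual estimate: $\normY{\lambda(t)-\eta}$ need not even tend to zero for an arbitrary $\eta\in M$ when $A^{\ast}$ has a nontrivial kernel, and the remedy is the conservation of $\proj_{\ker A^{\ast}}\lambda(t)$ along trajectories together with the resulting freedom to choose $\eta$. The other device worth emphasizing is the quadratic-inequality argument, which closes the estimate in one step and sidesteps a bootstrap that would only reach decay exponents approaching $1/2$ without ever attaining it.
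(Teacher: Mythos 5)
Your proposal is correct, but it takes a genuinely different and considerably longer route than the paper. The paper's proof never needs to control the dual variable at all: it observes that the second equation of \eqref{sec2:arrowhurwicz} together with $A\xi-b=0_{Y}$ gives $\dot{\lambda}(t)=A(x(t)-\xi)$, so the bounded-from-below hypothesis yields
\begin{equation*}
	\beta\normX{x(t)-\xi}^{2}\leq\normY{A(x(t)-\xi)}^{2}=\normY{\dot{\lambda}(t)}^{2},
\end{equation*}
and multiplying by $t$ and invoking $\lim_{t\to+\infty}t\norm{(\dot{x}(t),\dot{\lambda}(t))}^{2}=0$ from Theorem \ref{sec3:th:weakconv} already gives $\normX{x(t)-\xi}=o(1/\sqrt{t})$; for the gap, it then uses \eqref{sec3:lagrangianbound} with $T(\xi,\eta)=(0_{X},0_{Y})$, in which the skew terms involving $A$ and $A^{\ast}$ cancel in the monotone pairing, leaving $L(x(t),\eta)-L(\xi,\lambda(t))+\alpha\normX{x(t)-\xi}^{2}/2\leq\iprodX{\nabla f(x(t))-\nabla f(\xi)}{x(t)-\xi}\leq\gamma\normX{x(t)-\xi}^{2}=o(1/t)$, by Lipschitz continuity of $\nabla f$ on bounded sets. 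You instead insist on working through \eqref{sec3:cauchyschwarz}, which forces you to control $\normY{\lambda(t)-\eta}$ and hence to deploy the closed-range decomposition $Y=A(X)\oplus\ker A^{\ast}$, the conservation of $\proj_{\ker A^{\ast}}\lambda(t)$ along the flow, a special choice of $\eta\in M$ matching that conserved component, and a closing quadratic inequality. All of these steps are sound---including the final reduction to a single $\eta$, since the gap equals $f(x(t))-f(\xi)-\iprodX{\nabla f(\xi)}{x(t)-\xi}$ for every $\eta\in M$---but each is avoidable, and note that the paper's argument applies directly to arbitrary $(\xi,\eta)$ with no such reduction. What your detour buys is a genuinely stronger byproduct: you obtain $\normY{\lambda(t)-\eta}=o(1/\sqrt{t})$ for $\eta=\proj_{M}\lambda_{0}$, i.e.\ strong convergence of the full trajectory with a rate, under the hypothesis that $A$ (rather than $A^{\ast}$, as in Proposition \ref{sec3:pr:strongconv}) is bounded from below, with a limit consistent with the localization $\proj_{S\times M}(x_{0},\lambda_{0})$ of the weak limit. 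What the paper's route buys is brevity and robustness: two lines, no kernel decomposition, no privileged multiplier.
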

\begin{proof}
	Let $(\xi,\eta)\in S\times M$. Since $A$ is bounded from below, there exists $\beta>0$ such that for any $t\geq0$,
	\begin{equation*}
		\normX{\dot{x}(t)}^{2}+\beta\normX{x(t)-\xi}^{2}\leq
		\normX{\dot{x}(t)}^{2}+\normY{A(x(t)-\xi)}^{2}.
	\end{equation*}
	Using \eqref{sec2:arrowhurwicz} together with the fact that $A\xi-b=0_{Y}$, we obtain
	\begin{equation*}
		\normX{\dot{x}(t)}^{2}+\beta\normX{x(t)-\xi}^{2}\leq
		\normX{\dot{x}(t)}^{2}+\normY{\dot{\lambda}(t)}^{2}.
	\end{equation*}
	Multiplying the above inequality by $t$ and using $\lim_{t\to+\infty}t\norm{(\dot{x}(t),\dot{\lambda}(t))}^{2}=0$, cf. Theorem \ref{sec3:th:weakconv}, we infer
	\begin{equation*}
		\lim_{t\to+\infty}t\big(\normX{\dot{x}(t)}^{2}+
		\beta\normX{x(t)-\xi}^{2}\big)=0.
	\end{equation*}
	Moreover, from \eqref{sec3:lagrangianbound} together with $T(\xi,\eta)=(0_{X},0_{Y})$, we observe that for any $t\geq0$,
	\begin{align*}
		L(x(t),\eta)-L(\xi,\lambda(t))+\alpha\normX{x(t)-\xi}^{2}/2
		&\leq\iprod{T(x(t),\lambda(t))-T(\xi,\eta)}{(x(t),\lambda(t))-(\xi,\eta)}\\
		&=\iprodX{\nabla f(x(t))-\nabla f(\xi)}{x(t)-\xi}.
	\end{align*}
	Using that $\nabla f$ is Lipschitz continuous on bounded sets, there exists $\gamma\geq0$ such that
	\begin{equation*}
		L(x(t),\eta)-L(\xi,\lambda(t))+\alpha\normX{x(t)-\xi}^{2}/2\leq
		\gamma\normX{x(t)-\xi}^{2}.
	\end{equation*}
	Multiplying this inequality by $t$ and using $\lim_{t\to+\infty}t\normX{x(t)-\xi}^{2}=0$, we conclude
	\begin{equation*}
		\lim_{t\to+\infty}t\big(L(x(t),\eta)-L(\xi,\lambda(t))+
		\alpha\normX{x(t)-\xi}^{2}/2\big)=0.\qedhere
	\end{equation*}
\end{proof}

\subsection{Strong convergence}
Let us now complement the previous discussion with a result on the strong convergence of the solutions of \eqref{sec2:arrowhurwicz}. To this end, we assume that $A^{\ast}:Y\to X$ is bounded from below\footnote{%
	We note that $A^{\ast}:Y\to X$ is bounded from below if and only if $A$ is surjective; see, e.g., Br{\'e}zis \cite{HB:11}.
}, i.e.,
\begin{equation*}
	\exists\beta>0\ \forall y\in Y\quad
	\normX{A^{\ast}y}^{2}\geq\beta\normY{y}^{2}.
\end{equation*}
This clearly implies that the set $S\times M$ is reduced to $\{(\bar{x},\bar{\lambda})\}$, where $\bar{x}\in X$ is the unique minimizer of \eqref{sec1:cvxproblem} and $\bar{\lambda}\in Y$ refers to the corresponding Lagrange multiplier given by
\begin{equation*}
	\bar{\lambda}=-(AA^{\ast})^{-1}A\nabla f(\bar{x}).
\end{equation*}
\begin{proposition}\label{sec3:pr:strongconv}
	Let $\nabla f:X\to X$ be $\alpha$-strongly monotone, let $A^{\ast}:Y\to X$ be bounded from below, and let $(x,\lambda):[0,+\infty)\to X\times Y$ be a solution of \eqref{sec2:arrowhurwicz}. Then, for $(\xi,\eta)\in S\times M$, it holds that
	\begin{align*}
		L(x(t),\eta)-L(\xi,\lambda(t))&=o\Big(\frac{1}{t}\Big)\
		\text{as $t\to+\infty$};\\
		\norm{(x(t),\lambda(t))-(\xi,\eta)}&=
		o\Big(\frac{1}{\sqrt{t}}\Big)\ \text{as $t\to+\infty$}.
	\end{align*}
	Consequently, $(x(t),\lambda(t))$ converges strongly, as $t\to+\infty$, to the unique element in $S\times M$.
\end{proposition}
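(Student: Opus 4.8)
The plan is to reduce the whole statement to a single stable linear differential inequality for the Lyapunov energy
$\mathcal{E}(t)=\tfrac12\normX{x(t)-\bar{x}}^{2}+\tfrac12\normY{\lambda(t)-\bar\lambda}^{2}$,
where $(\xi,\eta)=(\bar x,\bar\lambda)$ is the unique element of $S\times M$. First I would record the exact energy identity: differentiating $\mathcal E$ along \eqref{sec2:arrowhurwicz}, substituting $\dot x=-(\nabla f(x)+A^{\ast}\lambda)$ and $\dot\lambda=Ax-b$, and using $T(\bar x,\bar\lambda)=(0_{X},0_{Y})$, the skew-symmetric cross terms $\pm\iprodY{A(x-\bar x)}{\lambda-\bar\lambda}$ cancel, leaving $\dot{\mathcal E}(t)=-\iprodX{\nabla f(x(t))-\nabla f(\bar x)}{x(t)-\bar x}\le-\alpha\normX{x(t)-\bar x}^{2}$. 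Integrating this already yields $\normX{x-\bar x}^{2}\in\LL^{1}([0,+\infty);\rl)$.

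Next I would exploit that $A^{\ast}$ is bounded from below to transfer control from the primal trajectory to the dual one. Subtracting the first optimality condition from the first line of \eqref{sec2:arrowhurwicz} gives $A^{\ast}(\lambda-\bar\lambda)=-\dot x-(\nabla f(x)-\nabla f(\bar x))$; since $(x,\lambda)$ stays bounded by Proposition~\ref{sec2:pr:lagrangian}{\it (i)} and $\nabla f$ is Lipschitz on bounded sets, there is $\gamma\ge0$ with $\beta\normY{\lambda-\bar\lambda}^{2}\le\normX{A^{\ast}(\lambda-\bar\lambda)}^{2}\le 2\normX{\dot x}^{2}+2\gamma^{2}\normX{x-\bar x}^{2}$. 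This is the analogue of the relation $\normY{A(x-\xi)}=\normX{\dot\lambda}$ used in Corollary~\ref{sec3:co:refinedgap}, except that the extra $\nabla f$-remainder precludes the clean identity and forces the primal estimate to be sharpened first.

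Combining the two displays, I would bound $\mathcal E\le c_{1}\normX{x-\bar x}^{2}+c_{2}\normX{\dot x}^{2}$ with $c_{1}=\tfrac12+\gamma^{2}/\beta$, $c_{2}=1/\beta$, and insert this into the energy inequality to obtain $\dot{\mathcal E}+\mu\mathcal E\le\nu\normX{\dot x}^{2}$ with $\mu=\alpha/c_{1}>0$ and $\nu=\alpha c_{2}/c_{1}$. A Gr{\"o}nwall argument then yields $\mathcal E(t)\le\mathrm{e}^{-\mu t}\mathcal E(0)+\nu\int_{0}^{t}\mathrm{e}^{-\mu(t-\tau)}\normX{\dot x(\tau)}^{2}\,\mathrm{d}\tau$. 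Since Theorem~\ref{sec3:th:weakconv} gives $\norm{(\dot x,\dot\lambda)}=o(1/\sqrt t)$, hence $\normX{\dot x(t)}^{2}=o(1/t)$, splitting the convolution integral at $t/2$ shows $t\,\mathcal E(t)\to0$, i.e.\ $\norm{(x(t),\lambda(t))-(\xi,\eta)}=o(1/\sqrt t)$; in particular $\mathcal E(t)\to0$, which is the asserted strong convergence to the unique saddle point. Finally, feeding this rate into the Cauchy--Schwarz estimate \eqref{sec3:cauchyschwarz}, namely $L(x(t),\eta)-L(\xi,\lambda(t))+\alpha\normX{x(t)-\xi}^{2}/2\le\norm{(x(t),\lambda(t))-(\xi,\eta)}\,\norm{(\dot x(t),\dot\lambda(t))}=o(1/\sqrt t)\cdot o(1/\sqrt t)$, delivers the gap bound $o(1/t)$.

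I expect the main obstacle to be upgrading the primal decay from the $o(1/\sqrt t)$ rate on $\normX{x-\bar x}^{2}$ that Theorem~\ref{sec3:th:weakconv} provides (through its estimate on $\sqrt t\,(L(x,\eta)-L(\xi,\lambda)+\alpha\normX{x-\xi}^{2}/2)$) to the sharp $t\normX{x-\bar x}^{2}\to0$. Unlike Corollary~\ref{sec3:co:refinedgap}, where $\normX{\dot\lambda}=\normY{A(x-\xi)}$ holds \emph{exactly}, the Lipschitz remainder $\nabla f(x)-\nabla f(\bar x)$ couples the primal and dual trajectories; the energy-based linear differential inequality together with the convolution estimate is precisely what decouples them and produces the claimed rates.
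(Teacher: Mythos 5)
Your proposal is correct and shares its skeleton with the paper's proof: the same squared-error Lyapunov function, the same use of the lower bound on $A^{\ast}$ together with the Lipschitz continuity of $\nabla f$ on bounded sets to dominate $\normY{\lambda-\eta}^{2}$ by $\normX{\dot x}^{2}+\gamma^{2}\normX{x-\xi}^{2}$, the same resulting differential inequality (your $\dot{\mathcal E}+\mu\mathcal E\le\nu\normX{\dot x}^{2}$ is, after rescaling, precisely the paper's $(\beta/2+\gamma^{2})\tfrac{\mathrm{d}}{\mathrm{d}t}\norm{(x,\lambda)-(\xi,\eta)}^{2}+\alpha\beta\norm{(x,\lambda)-(\xi,\eta)}^{2}\le2\alpha\normX{\dot x}^{2}$), and the identical concluding step for the gap via \eqref{sec3:cauchyschwarz}. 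The genuine difference lies in how that differential inequality is converted into $t\,\norm{(x(t),\lambda(t))-(\xi,\eta)}^{2}\to0$. The paper integrates it \emph{without} an exponential weight, absorbs $2\alpha\int_{0}^{t}\normX{\dot x(\tau)}^{2}\,\mathrm{d}\tau$ using \eqref{sec3:strongmono}, deduces $\norm{(x,\lambda)-(\xi,\eta)}^{2}\in\LL^{1}([0,+\infty);\rl)$, and then passes from integrability to the pointwise rate via the monotonicity of $t\mapsto\norm{(x(t),\lambda(t))-(\xi,\eta)}$ from Proposition \ref{sec2:pr:lagrangian}(i) --- the same integrability-plus-monotonicity device used for the velocity in Theorem \ref{sec3:th:weakconv}. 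You instead apply Gr\"onwall with the weight $\mathrm{e}^{\mu t}$ and estimate the convolution $\int_{0}^{t}\mathrm{e}^{-\mu(t-\tau)}\normX{\dot x(\tau)}^{2}\,\mathrm{d}\tau$ by splitting at $t/2$: the tail is $o(1/t)$ by the velocity rate of Theorem \ref{sec3:th:weakconv}, and the head is exponentially negligible since $\normX{\dot x}$ is bounded by Theorem \ref{sec2:th:existence}(i) (or square-integrable, as established within the proof of Theorem \ref{sec3:th:weakconv}); this detail is sketched rather than written out in your proposal, but it goes through. Both mechanisms are sound and consume comparable inputs; the paper's is slightly more economical, needing only integrability of $\normX{\dot x}^{2}$ and norm monotonicity rather than the pointwise velocity rate, whereas yours dispenses with the monotonicity argument entirely and makes the exponentially decaying influence of the initial data explicit, which foreshadows the exponential estimates of Section \ref{sec4}.
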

\begin{proof}
	Let $(\xi,\eta)$ be the unique element in $S\times M$. Using \eqref{sec2:arrowhurwicz} together with $T(\xi,\eta)=(0_{X},0_{Y})$ and the fact that $\nabla f$ is $\alpha$-strongly monotone, we have for any $t\geq0$,
	\begin{equation}
		\begin{split}\label{sec3:strongmonestimate}
			\frac{\mathrm{d}}{\mathrm{d}t}\big(\normX{x(t)-\xi}^{2}+
			\normY{\lambda(t)-\eta}^{2}\big)/2&+\alpha
			\big(\normX{x(t)-\xi}^{2}+\normY{\lambda(t)-\eta}^{2}\big)\\
			\leq\alpha\normY{\lambda(t)&-\eta}^{2}.
		\end{split}
	\end{equation}
	Since $A^{\ast}$ is bounded from below, there exists $\beta>0$ such that
	\begin{equation*}
		\beta\normY{\lambda(t)-\eta}^{2}\leq
		\normX{A^{\ast}(\lambda(t)-\eta)}^{2}.
	\end{equation*}
	Using again \eqref{sec2:arrowhurwicz} together with $\nabla f(\xi)+A^{\ast}\eta=0_{X}$, we get
	\begin{equation*}
		\beta\normY{\lambda(t)-\eta}^{2}\leq
		\normX{\dot{x}(t)+\nabla f(x(t))-\nabla f(\xi)}^{2}
	\end{equation*}
	and thus,
	\begin{equation*}
		\beta\normY{\lambda(t)-\eta}^{2}/2\leq\normX{\dot{x}(t)}^{2}+
		\normX{\nabla f(x(t))-\nabla f(\xi)}^{2}.
	\end{equation*}
	Since $(x,\lambda)$ remains bounded on $[0,+\infty)$, cf. Proposition \ref{sec2:pr:lagrangian}{\it (i)}, and owing to the fact that $\nabla f$ is Lipschitz continuous on bounded sets, there further exists $\gamma\geq0$ such that
	\begin{equation*}
		\beta\normY{\lambda(t)-\eta}^{2}/2\leq\normX{\dot{x}(t)}^{2}+
		\gamma^{2}\normX{x(t)-\xi}^{2}.
	\end{equation*}
	In view of the above derivations, we obtain
	\begin{align*}
		\frac{\mathrm{d}}{\mathrm{d}t}\big(\normX{x(t)-\xi}^{2}+
		\normY{\lambda(t)-\eta}^{2}\big)/2+\alpha
		\big(\normX{x(t)&-\xi}^{2}+\normY{\lambda(t)-\eta}^{2}\big)\\
		\leq2\alpha\normX{\dot{x}(t)}^{2}/\beta+
		2\alpha\gamma^{2}\normX{x(t)&-\xi}^{2}/\beta,
	\end{align*}
	which, by applying \eqref{sec3:strongmonestimate} again, reads
	\begin{align*}
		(&\beta/2+\gamma^{2})\frac{\mathrm{d}}{\mathrm{d}t}
		\big(\normX{x(t)-\xi}^{2}+\normY{\lambda(t)-\eta}^{2}\big)\\
		+\,\alpha&\beta\big(\normX{x(t)-\xi}^{2}+
		\normY{\lambda(t)-\eta}^{2}\big)\leq2\alpha\normX{\dot{x}(t)}^{2}.
	\end{align*}
	Integration over $[0,t]$ yields
	\begin{align*}
		(\beta/2+\gamma^{2})\big(\normX{x(t)-\xi}^{2}+
		\normY{\lambda(t)-\eta}^{2}\big)+
		\alpha\beta\int_{0}^{t}\normX{x(\tau)&-\xi}^{2}+
		\normY{\lambda(\tau)-\eta}^{2}\,\mathrm{d}\tau\\
		\leq2\alpha\int_{0}^{t}\normX{\dot{x}(\tau)}^{2}\,\mathrm{d}\tau+
		(\beta/2+\gamma^{2})\big(\normX{x(0)-\xi}^{2}&+
		\normY{\lambda(0)-\eta}^{2}\big).
	\end{align*}
	Combining the above inequality with \eqref{sec3:strongmono} gives
	\begin{align*}
		\normX{\dot{x}(t)}^{2}+\normY{\dot{\lambda}(t)}^{2}+(\beta/2+\gamma^{2})
		\big(\normX{x(t)&-\xi}^{2}+\normY{\lambda(t)-\eta}^{2}\big)\\
		+\,\alpha\beta\int_{0}^{t}\normX{x(\tau)-\xi}^{2}+
		\normY{\lambda(\tau)&-\eta}^{2}\,\mathrm{d}\tau\leq C,
	\end{align*}
	where $C=\normX{\dot{x}(0)}^{2}+\normY{\dot{\lambda}(0)}^{2}+(\beta/2+\gamma^{2})\big(\normX{x(0)-\xi}^{2}+\normY{\lambda(0)-\eta}^{2}\big)$. Taking into account that $\norm{(\dot{x}(t),\dot{\lambda}(t))}^{2}\geq0$ and $\norm{(x(t),\lambda(t))-(\xi,\eta)}^{2}\geq0$, and subsequently passing to the limit as $t\to+\infty$ yields
	\begin{equation*}
		\int_{0}^{\infty}\normX{x(\tau)-\xi}^{2}+
		\normY{\lambda(\tau)-\eta}^{2}\,\mathrm{d}\tau<+\infty.
	\end{equation*}
	Since $t\mapsto\norm{(x(t),\lambda(t))-(\xi,\eta)}^{2}$ is non-increasing on $[0,+\infty)$, cf. Proposition \ref{sec2:pr:lagrangian}{\it (i)}, we have for any $t\geq0$,
	\begin{equation*}
		\int_{t/2}^{t}\normX{x(\tau)-\xi}^{2}+
		\normY{\lambda(\tau)-\eta}^{2}\,\mathrm{d}\tau\geq
		t\big(\normX{x(t)-\xi}^{2}+\normY{\lambda(t)-\eta}^{2}
		\big)/2.
	\end{equation*}
	Noticing that $\norm{(x,\lambda)-(\xi,\eta)}^{2}$ belongs to $\LL^{1}([0,+\infty);\rl)$, we classically deduce
	\begin{equation*}
		\lim_{t\to+\infty}t\big(\normX{x(t)-\xi}^{2}+
		\normY{\lambda(t)-\eta}^{2}\big)=0.
	\end{equation*}
	Finally, recall from \eqref{sec3:cauchyschwarz} that for any $t\geq0$,
	\begin{equation*}
		L(x(t),\eta)-L(\xi,\lambda(t))+\alpha\normX{x(t)-\xi}^{2}/2
		\leq\norm{(x(t),\lambda(t))-(\xi,\eta)}
		\norm{(\dot{x}(t),\dot{\lambda}(t))}.
	\end{equation*}
	Multiplying the above inequality by $t$ and using $\lim_{t\to+\infty}\sqrt{t}\norm{(\dot{x}(t),\dot{\lambda}(t))}=0$, cf. Theorem \ref{sec3:th:weakconv}, together with $\lim_{t\to+\infty}\sqrt{t}\norm{(x(t),\lambda(t))-(\xi,\eta)}=0$, we infer
	\begin{equation*}
		\lim_{t\to+\infty}t\big(L(x(t),\eta)-L(\xi,\lambda(t))+
		\alpha\normX{x(t)-\xi}^{2}/2\big)=0,
	\end{equation*}
	concluding the desired estimates.
\end{proof}

\section{Exponential decay rate estimates}\label{sec4}
In this section, we provide decay rate estimates of exponential type on the solutions of \eqref{sec2:arrowhurwicz} under the additional assumption that $f$ is twice continuously differentiable. In particular, we presuppose that
\begin{enumerate}[\hspace{6pt}({A}1)]
	\setcounter{enumi}{5}
	\item $f:X\to\rl$ satisfies condition ${\rm (C)}$, i.e.,
	\begin{equation*}
		2D_{f}(y,x)-\iprodX{\nabla^{2}f(x)(x-y)}{x-y}\geq0,\quad\forall x,y\in X;
	\end{equation*}
	\item $\nabla^{2}f(\,\cdot\,):X\to X$ is $\gamma$-bounded, i.e.,
	\begin{equation*}
		\exists\gamma>0\ \forall x,y\in X\quad
		\iprodX{\nabla^{2}f(x)y}{y}\leq\gamma\normX{y}^{2}.
	\end{equation*}
\end{enumerate}
Here, $D_{f}$ denotes again the Bregman distance associated with $f$, cf. Bregman \cite{LMB:67}, and $\nabla^{2}f$ refers to the Hessian of $f$. We remark that condition ${\rm (C)}$ is verified whenever $f$ is minorized by its second-order Taylor approximations.

\subsection{`Primal exponential estimates'}
Let us first establish exponential decay rate estimates on the solutions of \eqref{sec2:arrowhurwicz} in the case when $A:X\to Y$ is bounded from below, i.e.,
\begin{equation*}
	\exists\beta>0\ \forall x\in X\quad\normY{Ax}^{2}\geq\beta\normX{x}^{2}.
\end{equation*}
\begin{theorem}\label{sec4:th:expconv}
	Let $\nabla^{2}f(\,\cdot\,):X\to X$ be $\alpha$-elliptic and $\gamma$-bounded, and suppose that $A:X\to Y$ is bounded from below with constant $\beta$. Let $f:X\to\rl$ satisfy condition {\rm (C)} and set
	\begin{equation*}
		\rho=
		\begin{cases}
			\hspace{45pt}\alpha/2,&\text{if $\gamma^{2}\leq4\beta$,}\\
			\min\{\alpha,\gamma-\sqrt{\gamma^{2}-4\beta}\}/2,
			&\text{if $\gamma^{2}>4\beta$}.
		\end{cases}
	\end{equation*}
	Let $(x,\lambda):[0,+\infty)\to X\times Y$ be a solution of \eqref{sec2:arrowhurwicz}. Then, for any $(\xi,\eta)\in S\times M$, the following assertions hold:
	\begin{enumerate}[(i)]
		\item If $\rho^{2}-\gamma\rho+\beta>0$, then it holds that
		\begin{align*}
			L(x(t),\eta)-L(\xi,\lambda(t))&=
			\OO\big(\mathrm{e}^{-2\rho t}\big)\ \text{as $t\to+\infty$};\\
			\norm{(\dot{x}(t),\dot{\lambda}(t))}^{2}&=
			\OO\big(\mathrm{e}^{-2\rho t}\big)\ \text{as $t\to+\infty$};\\
			\normX{x(t)-\xi}^{2}&=
			\OO\big(\mathrm{e}^{-2\rho t}\big)\ \text{as $t\to+\infty$};
		\end{align*}
		\item If $\rho^{2}-\gamma\rho+\beta=0$, then it holds that
		\begin{align*}
			L(x(t),\eta)-L(\xi,\lambda(t))&=
			\OO\big(t^{2}\mathrm{e}^{-2\rho t}\big)\ \text{as $t\to+\infty$};\\
			\norm{(\dot{x}(t),\dot{\lambda}(t))}^{2}&=
			\OO\big(t^{2}\mathrm{e}^{-2\rho t}\big)\ \text{as $t\to+\infty$};\\
			\normX{x(t)-\xi}^{2}&=
			\OO\big(t^{2}\mathrm{e}^{-2\rho t}\big)\ \text{as $t\to+\infty$}.
		\end{align*}
	\end{enumerate}
\end{theorem}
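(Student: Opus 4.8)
The plan is to build a single scalar Lyapunov functional out of the velocity energy, the Lagrangian gap, and the squared primal distance, to show it decays at rate $2\rho$, and then to read off all three estimates. First I would record the identities available under these hypotheses. Since $f\in\CC^{2}$, the solution $(x,\lambda)$ is of class $\CC^{2}$, and differentiating the first line of \eqref{sec2:arrowhurwicz} (using $\dot\lambda=A(x-\xi)$, which follows from $A\xi=b$) shows that $x$ solves the damped-oscillator equation $\ddot x+\nabla^{2}f(x)\dot x+A^{\ast}A(x-\xi)=0_{X}$; equivalently, $\tfrac{\mathrm{d}}{\mathrm{d}t}\tfrac12\norm{(\dot x,\dot\lambda)}^{2}=-\iprodX{\nabla^{2}f(x)\dot x}{\dot x}$. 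Using $A\xi=b$ and $\nabla f(\xi)+A^{\ast}\eta=0_{X}$, the primal-dual gap satisfies $L(x(t),\eta)-L(\xi,\lambda(t))=D_{f}(x(t),\xi)$, while the Lagrangian identity \eqref{sec1:lagidentity} gives $\tfrac{\mathrm{d}}{\mathrm{d}t}\big(L(\xi,\eta)-L(x,\lambda)\big)=\normX{\dot x}^{2}-\normY{\dot\lambda}^{2}$. The $\alpha$-ellipticity and $\gamma$-boundedness of $\nabla^{2}f$ yield $\tfrac{\alpha}{2}\normX{x-\xi}^{2}\le D_{f}(x,\xi)\le\tfrac{\gamma}{2}\normX{x-\xi}^{2}$, and since $\normY{\dot\lambda}^{2}=\normY{A(x-\xi)}^{2}$ the lower bound on $A$ gives $\beta\normX{x-\xi}^{2}\le\normY{\dot\lambda}^{2}$. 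Hence it is enough to control $\normX{\dot x}^{2}+\normY{\dot\lambda}^{2}+\normX{x-\xi}^{2}$ by a coercive functional and prove its decay at rate $2\rho$; the three stated estimates then follow from these equivalences.

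Next I would analyse the functional
\[ \mathcal{E}(t)=\tfrac12\norm{(\dot x(t),\dot\lambda(t))}^{2}+2\rho\big(L(\xi,\eta)-L(x(t),\lambda(t))\big)+\kappa\,\normX{x(t)-\xi}^{2}, \]
with $\kappa\ge0$ to be matched to $\rho$. Writing $L(\xi,\eta)-L(x,\lambda)=D_{f}(\xi,x)+\iprodX{\dot x}{x-\xi}$ and completing the square against $\tfrac12\normX{\dot x}^{2}$, the bounds above show that $\mathcal{E}\ge0$ and that $\mathcal{E}$ dominates $\normX{\dot x}^{2}+\normY{\dot\lambda}^{2}+\normX{x-\xi}^{2}$ (up to a positive constant) for $\kappa$ in a suitable range. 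Differentiating and inserting the two identities, the gap term contributes the clean $2\rho(\normX{\dot x}^{2}-\normY{\dot\lambda}^{2})$ and the velocity term contributes $-\iprodX{\nabla^{2}f(x)\dot x}{\dot x}$. The Hessian-dependent terms are then absorbed by the two second-order hypotheses — $\gamma$-boundedness for the upper estimate $D_{f}(\xi,x)\le\tfrac{\gamma}{2}\normX{x-\xi}^{2}$, and condition ${\rm (C)}$ in the form $\iprodX{\nabla^{2}f(x)(x-\xi)}{x-\xi}\le2D_{f}(\xi,x)$ for the matching second-order remainder — after which $\dot{\mathcal{E}}+2\rho\mathcal{E}$ is bounded above by a quadratic form in $(\normX{\dot x},\normX{x-\xi})$ (using $\alpha I\le\nabla^{2}f(x)\le\gamma I$ and $A^{\ast}A\ge\beta I$ as quadratic forms) whose coefficients are governed by the scalar quantity $\rho^{2}-\gamma\rho+\beta$.

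The dichotomy of the theorem mirrors the sign of this quadratic form, which is exactly the characteristic expression of the worst-case oscillator (damping $\gamma$, stiffness $\beta$) evaluated at $-\rho$. In case {\it (i)}, where $\rho^{2}-\gamma\rho+\beta>0$, an admissible $\kappa$ makes the form negative definite, so $\dot{\mathcal{E}}+2\rho\mathcal{E}\le0$; Grönwall's lemma gives $\mathcal{E}(t)\le\mathcal{E}(0)\mathrm{e}^{-2\rho t}$, and the coercivity of $\mathcal{E}$ delivers the three $\OO(\mathrm{e}^{-2\rho t})$ estimates. In case {\it (ii)}, where $\rho^{2}-\gamma\rho+\beta=0$, the form degenerates and the coercive estimate no longer closes; this is the critically damped regime. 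I would then pass to the reduced quantity $w=\dot x+\rho(x-\xi)$, establish — via a companion functional in $(w,\dot\lambda)$ whose governing inequality remains non-resonant at the threshold — that $\normX{w}=\OO(\mathrm{e}^{-\rho t})$, and finally recover $x-\xi$ from the integrating-factor relation $\dot x+\rho(x-\xi)=w$. The resonance between the $\mathrm{e}^{-\rho t}$ decay of $w$ and the factor $\mathrm{e}^{-\rho t}$ then produces $\normX{x-\xi}=\OO(t\,\mathrm{e}^{-\rho t})$, whence the gap and the velocities inherit the $\OO(t^{2}\mathrm{e}^{-2\rho t})$ bounds.

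The main obstacle throughout is the state-dependent damping operator $\nabla^{2}f(x(t))$: it is known only to lie between $\alpha I$ and $\gamma I$, and its time derivative is uncontrolled, so it cannot be placed inside $\mathcal{E}$. Condition ${\rm (C)}$ is the device that circumvents this, trading the Hessian form $\iprodX{\nabla^{2}f(x)(x-\xi)}{x-\xi}$ for the well-behaved $2D_{f}(\xi,x)$, whose time derivative is precisely the cross term $\iprodX{\nabla^{2}f(x)\dot x}{x-\xi}$ furnished by the Lagrangian identity. The delicate bookkeeping is to choose $\kappa$ so that $\mathcal{E}$ is simultaneously coercive and satisfies $\dot{\mathcal{E}}+2\rho\mathcal{E}\le0$ exactly when $\rho^{2}-\gamma\rho+\beta\ge0$; and, in the borderline case {\it (ii)}, to verify that the companion functional for $w$ still decays at the full rate $2\rho$ so that the integrating-factor step yields the sharp polynomial factor $t^{2}$ without degrading the exponent.
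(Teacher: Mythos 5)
Your overall architecture --- a Lyapunov functional mixing velocity energy, the Lagrangian gap and the squared primal distance, a differential inequality $\dot{\mathcal{E}}+2\rho\mathcal{E}\le0$ closed by Gr\"onwall, and an integrating-factor step producing the factor $t^{2}$ in the critical case --- is the same as the paper's, and your preliminary identities ($\tfrac{\mathrm{d}}{\mathrm{d}t}\tfrac12\norm{(\dot x,\dot\lambda)}^{2}=-\iprodX{\nabla^{2}f(x)\dot x}{\dot x}$, $\dot\lambda=A(x-\xi)$, the gap being a Bregman distance) are correct. The genuine gap is in the functional itself: your Bregman/gap term carries the wrong sign, and this is fatal for the claimed rate. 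Writing $G=L(\xi,\eta)-L(x,\lambda)=\iprodX{\dot x}{x-\xi}+D_{f}(\xi,x)$, the Lagrangian identity gives $\dot G=\normX{\dot x}^{2}-\normY{\dot\lambda}^{2}$, hence
\begin{align*}
	\dot{\mathcal{E}}+2\rho\mathcal{E}
	&=-\iprodX{\nabla^{2}f(x)\dot x}{\dot x}+3\rho\normX{\dot x}^{2}
	-\rho\normY{\dot\lambda}^{2}\\
	&\quad+(2\kappa+4\rho^{2})\iprodX{\dot x}{x-\xi}
	+4\rho^{2}D_{f}(\xi,x)+2\rho\kappa\normX{x-\xi}^{2}.
\end{align*}
After $\alpha$-ellipticity the coefficient of $\normX{\dot x}^{2}$ is $3\rho-\alpha$, which is strictly positive at the theorem's value $\rho=\alpha/2$ (take the case $\gamma^{2}\le4\beta$), and no other term supplies a negative multiple of $\normX{\dot x}^{2}$ to absorb it: for instance, when $\nabla^{2}f(\,\cdot\,)=\alpha\id$ and the trajectory starts at $x_{0}=\xi$, $\lambda_{0}\notin M$, one has $x(0)-\xi=0_{X}$, $\dot\lambda(0)=0_{Y}$, $\dot x(0)=A^{\ast}(\eta-\lambda_{0})\neq0_{X}$, so the right-hand side equals $(3\rho-\alpha)\normX{\dot x(0)}^{2}>0$ for every choice of $\kappa$. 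So the key step ``an admissible $\kappa$ makes the form negative definite'' is false for the $\rho$ of the theorem; your route would at best force $\rho\le\alpha/3$ plus a discriminant condition coupling $\alpha,\beta,\gamma,\kappa$ that is not the stated quantity $\rho^{2}-\gamma\rho+\beta$. Relatedly, condition {\rm (C)} never gets to act in your computation: because you differentiate the gap through the Lagrangian identity, the form $\iprodX{\nabla^{2}f(x)(x-\xi)}{x-\xi}$ simply never appears, so the ``matching second-order remainder'' you invoke does not exist in your scheme.

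The paper's functional is exactly your $2\mathcal{E}$ with $\kappa=\rho^{2}$ and the sign of the Bregman term flipped,
\begin{equation*}
	E=\normX{\dot x+\rho(x-\xi)}^{2}+\rho^{2}\normX{x-\xi}^{2}
	-2\rho D_{f}(\xi,x)+\normY{\dot\lambda}^{2},
\end{equation*}
and with $w=\dot x+\rho(x-\xi)$ it satisfies the exact identity
\begin{align*}
	\frac{\mathrm{d}}{\mathrm{d}t}E+2\rho E
	&=-2\iprodX{(\nabla^{2}f(x)-2\rho\id)w}{w}\\
	&\quad-2\rho^{2}\big(2D_{f}(\xi,x)-\iprodX{\nabla^{2}f(x)(x-\xi)}{x-\xi}\big).
\end{align*}
Differentiating $-2\rho D_{f}(\xi,x)$ produces precisely the Hessian cross term that completes $\dot x$ to the shifted velocity $w$ inside the Hessian quadratic, so the velocity coefficient becomes $\alpha-2\rho\ge0$ rather than $\alpha-3\rho$, and condition {\rm (C)} is exactly what disposes of the leftover $\rho^{2}$-term; no indefinite cross terms survive. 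The price is that $E$ is not coercive by inspection: coercivity is recovered only a posteriori from $D_{f}(\xi,x)\le\gamma\normX{x-\xi}^{2}/2$ and $\normY{\dot\lambda}^{2}=\normY{A(x-\xi)}^{2}\ge\beta\normX{x-\xi}^{2}$, which give $E\ge\normX{w}^{2}+(\rho^{2}-\gamma\rho+\beta)\normX{x-\xi}^{2}$ --- this, and only this, is where the dichotomy on $\rho^{2}-\gamma\rho+\beta$ enters. Your insistence on a manifestly coercive functional is what destroys the cancellation. Once the correct $E$ is used, your case-(ii) plan needs no separate ``companion functional'': the bound $\normX{w}=\OO(\mathrm{e}^{-\rho t})$ comes from the same inequality (the coefficient of $\normX{w}^{2}$ in it is $1$ whether or not $\rho^{2}-\gamma\rho+\beta$ vanishes), and your integrating-factor recovery of $\normX{x(t)-\xi}=\OO(t\mathrm{e}^{-\rho t})$ then coincides with the paper's argument.
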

\begin{proof}
	Let $(\xi,\eta)\in S\times M$ and let $\rho>0$ to be chosen. Using again the `Lagrangian identity' \eqref{sec1:lagidentity} together with \eqref{sec2:arrowhurwicz}, we have for any $t\geq0$,
	\begin{align*}
		\frac{\mathrm{d}}{\mathrm{d}t}\big(\normX{\dot{x}(t)}^{2}+
		\normY{\dot{\lambda}(t)}^{2}&\big)/2+
		\iprodX{\nabla^{2}f(x(t))\dot{x}(t)}{\dot{x}(t)}\\
		+\,\rho\frac{\mathrm{d}}{\mathrm{d}t}\big(L(\xi,\eta)-
		L(x(t),\lambda(t))\big)&+\rho\big(\normX{\dot{x}(t)}^{2}+
		\normY{\dot{\lambda}(t)}^{2}\big)=2\rho\normX{\dot{x}(t)}^{2}.
	\end{align*}
	Moreover, from equation \eqref{sec3:lagdifequality}, we obtain
	\begin{align*}
		\frac{\mathrm{d}}{\mathrm{d}t}
		\big(L(\xi,\eta)&-L(x(t),\lambda(t))\big)=
		\frac{\mathrm{d}}{\mathrm{d}t}\iprodX{x(t)-\xi}{\dot{x}(x)}\\
		&+\iprodX{\nabla^{2}f(x(t))\dot{x}(t)}{x(t)-\xi}.
	\end{align*}
	Combining the above expressions yields
	\begin{align*}
		\frac{\mathrm{d}}{\mathrm{d}t}\big(\normX{\dot{x}(t)+
		\rho(x(t)&-\xi)}^{2}+\rho^{2}\normX{x(t)-\xi}^{2}+
		\normY{\dot{\lambda}(t)}^{2}\big)/2\\
		+\,\iprodX{(\nabla^{2}f(x(t))&-2\rho\id)\dot{x}(t)}{\dot{x}(t)+
		\rho(x(t)-\xi)}\\
		+\,\rho&\big(\normX{\dot{x}(t)}^{2}+\normY{\dot{\lambda}(t)}^{2}\big)=0.
	\end{align*}
	Developing the term in the second line gives
	\begin{align*}
		\frac{\mathrm{d}}{\mathrm{d}t}\big(\normX{\dot{x}(t)+
		\rho(x(t)-\xi)}^{2}+\rho^{2}\normX{x(t)-\xi}^{2}-
		2\rho D_{f}(\xi,x(t))&+\normY{\dot{\lambda}(t)}^{2}\big)/2\\
		+\,\rho\big(\normX{\dot{x}(t)+\rho(x(t)-\xi)}^{2}+
		\rho^{2}\normX{x(t)-\xi}^{2}-2\rho D_{f}(\xi,x(t))&+
		\normY{\dot{\lambda}(t)}^{2}\big)\\
		+\,\iprodX{(\nabla^{2}f(x(t))-2\rho\id)(\dot{x}(t)+
		\rho(x(t)-\xi))}{\dot{x}(t)+\rho(x(t)&-\xi)}\\
		+\,\rho^{2}\big(2D_{f}(\xi,x(t))-\iprodX{\nabla^{2}f(x(t))(x(t)-
		\xi)}{x(t)-\xi}\big)&=0.
	\end{align*}
	Since $\nabla^{2}f(\,\cdot\,)$ is $\alpha$-elliptic and $f$ satisfies condition {\rm (C)}, we obtain
	\begin{align*}
		\frac{\mathrm{d}}{\mathrm{d}t}\big(\normX{\dot{x}(t)+
		\rho(x(t)-\xi)}^{2}&+\rho^{2}\normX{x(t)-\xi}^{2}-
		2\rho D_{f}(\xi,x(t))+\normY{\dot{\lambda}(t)}^{2}\big)/2\\
		+\,\rho\big(\normX{\dot{x}(t)+\rho(x(t)-\xi)}^{2}&+
		\rho^{2}\normX{x(t)-\xi}^{2}-2\rho D_{f}(\xi,x(t))+
		\normY{\dot{\lambda}(t)}^{2}\big)\\
		+\,(\alpha-2&\rho)\normX{\dot{x}(t)+\rho(x(t)-\xi)}^{2}\leq0.
	\end{align*}
	An immediate integration over $[0,t]$ shows that there exists $C\geq0$ such that
	\begin{equation}\label{sec4:primalexp}
		\begin{split}
			\normX{\dot{x}(t)&+\rho(x(t)-\xi)}^{2}+
			\rho^{2}\normX{x(t)-\xi}^{2}-
			2\rho D_{f}(\xi,x(t))+\normY{\dot{\lambda}(t)}^{2}\\
			+\,2&(\alpha-2\rho)\int_{0}^{t}\mathrm{e}^{-2\rho(t-\tau)}
			\normX{\dot{x}(\tau)+\rho(x(\tau)-\xi)}^{2}\,\mathrm{d}\tau
			\leq C\mathrm{e}^{-2\rho t}.
		\end{split}
	\end{equation}
	Using that $\nabla^{2}f(\,\cdot\,)$ is $\gamma$-bounded and that $A$ is bounded from below with constant $\beta$, we have both $D_{f}(\xi,x(t))\leq\gamma\normX{x(t)-\xi}^{2}/2$ and $\normY{A(x(t)-\xi)}^{2}\geq\beta\normX{x(t)-\xi}^{2}$. In view of \eqref{sec2:arrowhurwicz} and $A\xi-b=0_{Y}$, we infer
	\begin{equation}\label{sec4:expinequ}
		\begin{split}
			\normX{\dot{x}(t)+\rho(x(t)-\xi)}^{2}+(\rho^{2}-\gamma\rho&+\beta)
			\normX{x(t)-\xi}^{2}\\
			+\,2(\alpha-2\rho)\int_{0}^{t}\mathrm{e}^{-2\rho(t-\tau)}
			\normX{\dot{x}(\tau)+\rho(x(\tau)&-\xi)}^{2}\,\mathrm{d}\tau
			\leq C\mathrm{e}^{-2\rho t}.
		\end{split}
	\end{equation}

	Let us now determine the largest value for $\rho\in(0,\alpha/2]$ such that $\rho^{2}-\gamma\rho+\beta\geq0$. Clearly, if $\gamma^{2}\leq4\beta$, then $\rho^{2}-\gamma\rho+\beta\geq0$ holds for any $\rho>0$. On the other hand, if $\gamma^{2}>4\beta$, then $\rho^{2}-\gamma\rho+\beta\geq0$ is attained whenever $\rho\leq\gamma/2-\sqrt{\gamma^{2}-4\beta}/2$. Consequently, we may take
	\begin{equation*}
		\rho=
		\begin{cases}
			\hspace{45pt}\alpha/2,&\text{if $\gamma^{2}\leq4\beta$,}\\
			\min\{\alpha,\gamma-\sqrt{\gamma^{2}-4\beta}\}/2,
			&\text{if $\gamma^{2}>4\beta$}.
		\end{cases}
	\end{equation*}
	We have either one of the following cases:

	{\it (i)} Suppose that $\rho^{2}-\gamma\rho+\beta>0$. In this case, we deduce from \eqref{sec4:expinequ} that
	\begin{align*}
		\mathrm{e}^{2\rho t}\normX{\dot{x}(t)+\rho(x(t)-\xi)}^{2}&\leq C;\\
		\mathrm{e}^{2\rho t}\normX{x(t)-\xi}^{2}&\leq
		\frac{C}{\rho^{2}-\gamma\rho+\beta}.
	\end{align*}
	Passing to the upper limit as $t\to+\infty$ yields
	\begin{align*}
		\limsup_{t\to+\infty}\,\mathrm{e}^{2\rho t}
		\normX{\dot{x}(t)+\rho(x(t)-\xi)}^{2}&<+\infty;\\
		\limsup_{t\to+\infty}\,\mathrm{e}^{2\rho t}
		\normX{x(t)-\xi}^{2}&<+\infty.
	\end{align*}
	Moreover, in view of the basic inequality
	\begin{equation*}
		\normX{\dot{x}(t)}^{2}\leq2\normX{\dot{x}(t)+\rho(x(t)-\xi)}^{2}+
		2\rho^{2}\normX{x(t)-\xi}^{2}
	\end{equation*}
	and the fact that
	\begin{equation*}
		\normY{\dot{\lambda}(t)}^{2}=\normY{A(x(t)-\xi)}^{2}\leq
		\norm{A}^{2}\normX{x(t)-\xi}^{2},
	\end{equation*}
	we obtain
	\begin{equation*}
		\limsup_{t\to+\infty}\,\mathrm{e}^{2\rho t}\big(\normX{\dot{x}(t)}^{2}+
		\normY{\dot{\lambda}(t)}^{2}\big)<+\infty.
	\end{equation*}
	The remaining estimate is now readily deduced as in Corollary \ref{sec3:co:refinedgap}.

	{\it (ii)} Suppose now that $\rho^{2}-\gamma\rho+\beta=0$. In this case, we observe from \eqref{sec4:expinequ} that
	\begin{equation*}
		\mathrm{e}^{2\rho t}\normX{\dot{x}(t)+\rho(x(t)-\xi)}^{2}\leq C.
	\end{equation*}
	Passing to the upper limit as $t\to+\infty$ entails
	\begin{equation*}
		\limsup_{t\to+\infty}\,\mathrm{e}^{2\rho t}
		\normX{\dot{x}(t)+\rho(x(t)-\xi)}^{2}<+\infty.
	\end{equation*}
	Moreover, given the fact that
	\begin{equation*}
		\mathrm{e}^{\rho t}\normX{x(t)-\xi}\leq\normX{x(0)-\xi}+
		\int_{0}^{t}\mathrm{e}^{\rho\tau}\normX{\dot{x}(\tau)+\rho(x(\tau)-\xi)}\,
		\mathrm{d}\tau,
	\end{equation*}
	we deduce
	\begin{equation*}
		\mathrm{e}^{\rho t}\normX{x(t)-\xi}\leq\sqrt{C}t+\normX{x(0)-\xi}.
	\end{equation*}
	Taking the square and multiplying the resulting inequality by $t^{-2}$ yields
	\begin{equation*}
		t^{-2}\mathrm{e}^{2\rho t}\normX{x(t)-\xi}^{2}\leq
		C+2\sqrt{C}\normX{x(0)-\xi}t^{-1}+\normX{x(0)-\xi}^{2}t^{-2}.
	\end{equation*}
	This majorization being valid for any $t>0$, we conclude
	\begin{equation*}
		\limsup_{t\to+\infty}\,t^{-2}\mathrm{e}^{2\rho t}
		\normX{x(t)-\xi}^{2}<+\infty.
	\end{equation*}
	The remaining estimates now follow at once.
\end{proof}

The previous result complements the exponential decay rate estimates obtained by Polyak \cite{BTP:70} based on spectral arguments. We further note that the above decay rate es- timates are comparable to the spectral bounds known for `saddle matrices'; cf. the sur- vey paper by Benzi et al. \cite[Section 3.4]{MB-GHG-JL:05} and references therein.

Assuming, moreover, that $\nabla^{2}f(\,\cdot\,)=\alpha\id$, we have the following refined exponential decay rate estimates.
\begin{corollary}\label{sec4:co:expconv}
	Let $\nabla^{2}f(\,\cdot\,)=\alpha\id$ and suppose that $A:X\to Y$ is bounded from below with constant $\beta$. Let $(x,\lambda):[0,+\infty)\to X\times Y$ be a solution of \eqref{sec2:arrowhurwicz}. Then, for any $(\xi,\eta)\in S\times M$, the following assertions hold:
	\begin{enumerate}[(i)]
		\item If $\alpha^{2}<4\beta$, then it holds that
		\begin{align*}
			L(x(t),\eta)-L(\xi,\lambda(t))&=
			\OO\big(\mathrm{e}^{-\alpha t}\big)\ \text{as $t\to+\infty$};\\
			\norm{(\dot{x}(t),\dot{\lambda}(t))}^{2}&=
			\OO\big(\mathrm{e}^{-\alpha t}\big)\ \text{as $t\to+\infty$};\\
			\normX{x(t)-\xi}^{2}&=
			\OO\big(\mathrm{e}^{-\alpha t}\big)\ \text{as $t\to+\infty$};
		\end{align*}
		\item If $\alpha^{2}=4\beta$, then it holds that
		\begin{align*}
			L(x(t),\eta)-L(\xi,\lambda(t))&=
			\OO\big(t^{2}\mathrm{e}^{-\alpha t}\big)\ \text{as $t\to+\infty$};\\
			\norm{(\dot{x}(t),\dot{\lambda}(t))}^{2}&=
			\OO\big(t^{2}\mathrm{e}^{-\alpha t}\big)\ \text{as $t\to+\infty$};\\
			\normX{x(t)-\xi}^{2}&=
			\OO\big(t^{2}\mathrm{e}^{-\alpha t}\big)\ \text{as $t\to+\infty$};
		\end{align*}
		\item If $\alpha^{2}>4\beta$, then it holds that
		\begin{align*}
			L(x(t),\eta)-L(\xi,\lambda(t))&=
			\OO\big(\mathrm{e}^{-(\alpha-\delta)t}\big)\ \text{as $t\to+\infty$};\\
			\norm{(\dot{x}(t),\dot{\lambda}(t))}^{2}&=
			\OO\big(\mathrm{e}^{-(\alpha-\delta)t}\big)\ \text{as $t\to+\infty$};\\
			\normX{x(t)-\xi}^{2}&=
			\OO\big(\mathrm{e}^{-(\alpha-\delta)t}\big)\ \text{as $t\to+\infty$},
		\end{align*}
		where $\delta=\sqrt{\alpha^{2}-4\beta}$.
	\end{enumerate}
\end{corollary}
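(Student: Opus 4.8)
The plan is to obtain the three cases by reducing to Theorem~\ref{sec4:th:expconv} wherever possible and supplementing it with a direct linear-ODE analysis in the over-damped regime. The starting observation is that $\nabla^{2}f(\,\cdot\,)=\alpha\id$ makes $f$ quadratic, so that $\nabla^{2}f(\,\cdot\,)$ is simultaneously $\alpha$-elliptic and $\gamma$-bounded with $\gamma=\alpha$, while condition ${\rm (C)}$ holds with equality (both sides equal $\alpha\normX{x-y}^{2}$). First I would invoke Theorem~\ref{sec4:th:expconv} with $\gamma=\alpha$. The prescribed value of $\rho$ then reads $\rho=\alpha/2$ when $\alpha^{2}\leq4\beta$ and $\rho=(\alpha-\delta)/2$ when $\alpha^{2}>4\beta$, where $\delta=\sqrt{\alpha^{2}-4\beta}$. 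The dichotomy in Theorem~\ref{sec4:th:expconv} is governed by the sign of $\rho^{2}-\gamma\rho+\beta$, which equals $\beta-\alpha^{2}/4$ in the first two cases and vanishes precisely when $\rho$ is a root of $r^{2}-\alpha r+\beta$.

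For case~{\it (i)}, $\alpha^{2}<4\beta$, one has $\rho=\alpha/2$ and $\rho^{2}-\gamma\rho+\beta=\beta-\alpha^{2}/4>0$, so Theorem~\ref{sec4:th:expconv}{\it (i)} applies verbatim and yields the rate $\OO(\mathrm{e}^{-2\rho t})=\OO(\mathrm{e}^{-\alpha t})$. For case~{\it (ii)}, $\alpha^{2}=4\beta$, again $\rho=\alpha/2$ but now $\rho^{2}-\gamma\rho+\beta=0$, so Theorem~\ref{sec4:th:expconv}{\it (ii)} gives $\OO(t^{2}\mathrm{e}^{-2\rho t})=\OO(t^{2}\mathrm{e}^{-\alpha t})$. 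These two cases are thus immediate.

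The delicate case is {\it (iii)}. Here $\rho=(\alpha-\delta)/2$ is precisely the smaller root of $r^{2}-\alpha r+\beta$, so $\rho^{2}-\gamma\rho+\beta=0$ and a naive appeal to Theorem~\ref{sec4:th:expconv}{\it (ii)} would only produce the weaker estimate $\OO(t^{2}\mathrm{e}^{-(\alpha-\delta)t})$. To remove the spurious polynomial factor I would exploit that $\nabla^{2}f(\,\cdot\,)=\alpha\id$ renders the dynamics linear: writing $u=x-\xi$ and $v=\lambda-\eta$ and using $\nabla f(\xi)+A^{\ast}\eta=0_{X}$, $A\xi-b=0_{Y}$, the system \eqref{sec2:arrowhurwicz} becomes $\dot{u}=-\alpha u-A^{\ast}v$, $\dot{v}=Au$, whence $u$ solves the damped harmonic oscillator $\ddot{u}+\alpha\dot{u}+A^{\ast}Au=0_{X}$ with self-adjoint stiffness $A^{\ast}A\geq\beta\id$. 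The substitution $u=\mathrm{e}^{-\rho t}z$ with $\rho=(\alpha-\delta)/2$, using $\alpha-2\rho=\delta$ and $\rho^{2}-\alpha\rho=-\beta$, transforms this into
\[
	\ddot{z}+\delta\dot{z}+(A^{\ast}A-\beta\id)z=0_{X},
\]
with $\delta>0$ and $B:=A^{\ast}A-\beta\id\geq0$.

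It then remains to show that $z$ stays bounded, for then $\normX{x(t)-\xi}^{2}=\mathrm{e}^{-2\rho t}\normX{z(t)}^{2}=\OO(\mathrm{e}^{-(\alpha-\delta)t})$ and the estimates for $\norm{(\dot{x},\dot{\lambda})}^{2}$ and the primal-dual gap follow exactly as in Theorem~\ref{sec4:th:expconv} and Corollary~\ref{sec3:co:refinedgap} (via $\normY{\dot{\lambda}}^{2}=\normY{A(x-\xi)}^{2}\leq\norm{A}^{2}\normX{x-\xi}^{2}$ and inequality \eqref{sec3:cauchyschwarz}). The energy $\mathcal{E}(t)=\normX{\dot{z}(t)}^{2}/2+\iprodX{Bz(t)}{z(t)}/2$ satisfies $\dot{\mathcal{E}}=-\delta\normX{\dot{z}}^{2}\leq0$, so $\dot{z}$ is bounded and $\normX{\dot{z}}\in\LL^{2}$; this is the \emph{main obstacle}, because when $B$ is degenerate (precisely when $\beta=\inf\sigma(A^{\ast}A)$ sits at the bottom of the spectrum) the energy controls $\dot{z}$ but not the component of $z$ in the near-kernel of $B$, and merely integrating $\normX{\dot{z}}$ would reintroduce linear growth. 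I would resolve this through the spectral decomposition of the self-adjoint operator $A^{\ast}A$, reducing the equation to the scalar family $\ddot{z}_{s}+\delta\dot{z}_{s}+(s-\beta)z_{s}=0$ indexed by $s\in\sigma(A^{\ast}A)\subset[\beta,\norm{A}^{2}]$: for $s=\beta$ the solution converges to a constant, for $s>\beta$ it decays, and a uniform bound $\envert{z_{s}(t)}\leq C(\envert{z_{s}(0)}+\envert{\dot{z}_{s}(0)})$ valid for all $s\geq\beta$ and $t\geq0$ (with $C$ depending only on $\delta$) integrates over the spectral measure to yield $\normX{z(t)}\leq C'(\normX{z(0)}+\normX{\dot{z}(0)})$, uniformly in $t$. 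This boundedness closes case~{\it (iii)}.
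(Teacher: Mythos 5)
Your cases \emph{(i)} and \emph{(ii)} coincide with the paper's own proof: both are read off directly from Theorem \ref{sec4:th:expconv} with $\gamma=\alpha$, exactly as you argue. For case \emph{(iii)} you take a genuinely different route. The paper stays inside the Lyapunov framework of Theorem \ref{sec4:th:expconv}: it observes that inequality \eqref{sec4:expinequ} is valid for \emph{any} $\rho>0$ and chooses the \emph{larger} root $\rho=(\alpha+\delta)/2$ of $r^{2}-\alpha r+\beta$, so that the integral term carries the negative coefficient $2(\alpha-2\rho)=-2\delta$; moving it to the right-hand side turns \eqref{sec4:expinequ} into a Gronwall inequality, giving $\normX{\dot{x}(t)+\rho(x(t)-\xi)}^{2}\leq C\mathrm{e}^{-(\alpha-\delta)t}$, and the state estimate follows by integrating $\frac{\mathrm{d}}{\mathrm{d}t}\big(\mathrm{e}^{\rho t}(x(t)-\xi)\big)=\mathrm{e}^{\rho t}\big(\dot{x}(t)+\rho(x(t)-\xi)\big)$. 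You instead pick the \emph{smaller} root, exploit that $\nabla^{2}f(\,\cdot\,)=\alpha\id$ makes the dynamics linear, reduce to the damped oscillator $\ddot{u}+\alpha\dot{u}+A^{\ast}Au=0_{X}$ (precisely the second-order system of Remark \ref{sec4:rm:cdhosc}), and control the rescaled variable $z=\mathrm{e}^{\rho t}u$ via the spectral decomposition of $A^{\ast}A$. This works: writing $z(t)=\phi_{t}(B)z(0)+\psi_{t}(B)\dot{z}(0)$ with $B=A^{\ast}A-\beta\id$ by functional calculus, the scalar bounds $\abs{\phi_{t}(\mu)},\abs{\psi_{t}(\mu)}\leq C(\delta)$, uniform over $\mu\geq0$ and $t\geq0$, do hold, although the uniformity requires the case analysis over overdamped, critical and underdamped scalar modes that you assert rather than carry out (the individual exponential coefficients blow up as the two scalar roots coalesce, so one must estimate the paired combination, not each term). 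The trade-off: the paper's Gronwall trick is shorter and entirely elementary, whereas your reduction needs the spectral theorem but explains structurally why no polynomial factor $t^{2}$ can occur in case \emph{(iii)} --- the critical spectral mode $\mu=0$ of the rescaled equation has two \emph{distinct} characteristic roots $0$ and $-\delta$, unlike the genuine double root in case \emph{(ii)}.

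One caution on your final step: for the primal-dual gap you invoke inequality \eqref{sec3:cauchyschwarz}, which bounds the gap by $\norm{(x(t),\lambda(t))-(\xi,\eta)}\,\norm{(\dot{x}(t),\dot{\lambda}(t))}$. Since only $A$ (not $A^{\ast}$) is assumed bounded from below, $\normY{\lambda(t)-\eta}$ need not decay at all; the first factor is merely bounded, so \eqref{sec3:cauchyschwarz} delivers only $\OO\big(\mathrm{e}^{-(\alpha-\delta)t/2}\big)$, half the claimed exponent. The correct deduction --- the one actually used in Corollary \ref{sec3:co:refinedgap}, which you also cite --- is $L(x(t),\eta)-L(\xi,\lambda(t))+\alpha\normX{x(t)-\xi}^{2}/2\leq\iprodX{\nabla f(x(t))-\nabla f(\xi)}{x(t)-\xi}=\alpha\normX{x(t)-\xi}^{2}$, which converts your bound on $\normX{x(t)-\xi}^{2}$ into the full-rate gap estimate. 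Drop the reference to \eqref{sec3:cauchyschwarz} and route the gap through this quadratic bound instead; with that repair your argument is complete.
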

\begin{proof}
	{\it (i)--(ii)} This is an immediate consequence of Theorem \ref{sec4:th:expconv}{\it (i)--(ii)}.

	{\it (iii)} Suppose that $\alpha^{2}>4\beta$ and let $\rho=(\alpha+\delta)/2$, where $\delta=\sqrt{\alpha^{2}-4\beta}$, so that $\rho^{2}-\alpha\rho+\beta=0$. From \eqref{sec4:expinequ} and the fact that $\nabla^{2}f(\,\cdot\,)=\alpha\id$, we observe that there exists $C\geq0$ such that for any $t\geq0$,
	\begin{equation*}
		\mathrm{e}^{(\alpha+\delta)t}\normX{\dot{x}(t)+\rho(x(t)-\xi)}^{2}\leq
		C+2\delta\int_{0}^{t}\mathrm{e}^{(\alpha+\delta)\tau}
		\normX{\dot{x}(\tau)+\rho(x(\tau)-\xi)}^{2}\,\mathrm{d}\tau.
	\end{equation*}
	Applying Gronwall's inequality yields
	\begin{equation*}
		\mathrm{e}^{(\alpha+\delta)t}\normX{\dot{x}(t)+\rho(x(t)-\xi)}^{2}\leq
		C\mathrm{e}^{2\delta t}.
	\end{equation*}
	Using this inequality together with the fact that
	\begin{equation*}
		\mathrm{e}^{(\alpha+\delta)t/2}\normX{x(t)-\xi}\leq
		\normX{x(0)-\xi}+\int_{0}^{t}\mathrm{e}^{(\alpha+\delta)\tau/2}
		\normX{\dot{x}(\tau)+\rho(x(\tau)-\xi)}\,\mathrm{d}\tau,
	\end{equation*}
	we obtain
	\begin{equation*}
		\mathrm{e}^{(\alpha+\delta)t/2}\normX{x(t)-\xi}\leq
		\frac{\sqrt{C}}{\delta}\mathrm{e}^{\delta t}+\normX{x(0)-\xi}-
		\frac{\sqrt{C}}{\delta}.
	\end{equation*}
	Consequently,
	\begin{align*}
		\mathrm{e}^{(\alpha-\delta)t}\normX{x(t)-\xi}^{2}&\leq
		\frac{C}{\delta^{2}}+2\frac{\sqrt{C}}{\delta}\Big(\normX{x(0)-\xi}-
		\frac{\sqrt{C}}{\delta}\Big)\mathrm{e}^{-\delta t}\\
		&\quad+\Big(\normX{x(0)-\xi}-\frac{\sqrt{C}}{\delta}\Big)^{2}
		\mathrm{e}^{-2\delta t}.
	\end{align*}
	Passing to the upper limit as $t\to+\infty$ yields the desired estimate. The remaining as- sertions are now easily obtained.
\end{proof}
\begin{remark}\label{sec4:rm:cdhosc}
	The previous result essentially recovers the optimal decay rate estimates known for the classical damped harmonic oscillator. Indeed, in the case when $\nabla^{2}f(\,\cdot\,)=\alpha\id$, we observe in view of an immediate differentiation that the solutions of \eqref{sec2:arrowhurwicz} fur- ther obey the second-order dynamics
	\begin{equation*}
		\ddot{x}+\alpha\dot{x}+\nabla\normY{A(x-\bar{x})}^{2}/2=0_{X},
	\end{equation*}
	where $\bar{x}$ denotes the unique element in $S$. The above second-order differential system was first introduced, from a more general optimization perspective, by Polyak \cite{BTP:64} and is known to inherit remarkable minimizing properties; see, e.g., Alvarez \cite{FA:00} and Attouch et al. \cite{HA-XG-PR:00} for a general exposition.
\end{remark}

\subsection{`Dual exponential estimates'}
Let us now complement the previous discussion with decay rate estimates on the solutions of \eqref{sec2:arrowhurwicz} under the assumption that $A^{\ast}:Y\to X$ is bounded from below, i.e.,
\begin{equation*}
	\exists\beta>0\ \forall y\in Y\quad\normX{A^{\ast}y}^{2}\geq\beta\normY{y}^{2}.
\end{equation*}
\begin{proposition}\label{sec4:pr:dualexp}
	Let $\nabla^{2}f(\,\cdot\,)=\alpha\id$ and suppose that $A^{\ast}:Y\to X$ is bounded from below with constant $\beta$. Let $(x,\lambda):[0,+\infty)\to X\times Y$ be a solution of \eqref{sec2:arrowhurwicz}. Then, for $(\xi,\eta)\in S\times M$, the following assertions hold:
	\begin{enumerate}[(i)]
		\item If $\alpha^{2}<4\beta$, then it holds that
		\begin{align*}
			\normY{\lambda(t)-\eta}^{2}&=
			\OO\big(\mathrm{e}^{-\alpha t}\big)\ \text{as $t\to+\infty$};\\
			\normY{\dot{\lambda}(t)}^{2}&=
			\OO\big(\mathrm{e}^{-\alpha t}\big)\ \text{as $t\to+\infty$};
		\end{align*}
		\item If $\alpha^{2}=4\beta$, then it holds that
		\begin{align*}
			\normY{\lambda(t)-\eta}^{2}&=
			\OO\big(t^{2}\mathrm{e}^{-\alpha t}\big)\ \text{as $t\to+\infty$};\\
			\normY{\dot{\lambda}(t)}^{2}&=
			\OO\big(t^{2}\mathrm{e}^{-\alpha t}\big)\ \text{as $t\to+\infty$};
		\end{align*}
		\item If $\alpha^{2}>4\beta$, then it holds that
		\begin{align*}
			\normY{\lambda(t)-\eta}^{2}&=
			\OO\big(\mathrm{e}^{-(\alpha-\delta)t}\big)\ \text{as $t\to+\infty$};\\
			\normY{\dot{\lambda}(t)}^{2}&=
			\OO\big(\mathrm{e}^{-(\alpha-\delta)t}\big)\ \text{as $t\to+\infty$},
		\end{align*}
		where $\delta=\sqrt{\alpha^{2}-4\beta}$.
	\end{enumerate}
\end{proposition}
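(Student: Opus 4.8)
The plan is to reduce Proposition~\ref{sec4:pr:dualexp} to a damped harmonic oscillator in the dual variable, in exact analogy with the primal analysis of Corollary~\ref{sec4:co:expconv} and Remark~\ref{sec4:rm:cdhosc}. First I would derive a closed second-order equation for $\lambda$. Fix $(\xi,\eta)\in S\times M$ and set $v=\lambda-\eta$. Since $\nabla^{2}f(\,\cdot\,)=\alpha\id$, the gradient is affine with $\nabla f(x)-\nabla f(\xi)=\alpha(x-\xi)$. Differentiating the second line of \eqref{sec2:arrowhurwicz} and using $A\xi-b=0_{Y}$ gives $\dot{\lambda}=A(x-\xi)$ and hence $\ddot{\lambda}=A\dot{x}$; substituting the first line of \eqref{sec2:arrowhurwicz} together with $\nabla f(\xi)+A^{\ast}\eta=0_{X}$ yields $\dot{x}=-\alpha(x-\xi)-A^{\ast}v$, whence
\begin{equation*}
	\ddot{v}+\alpha\dot{v}+AA^{\ast}v=0_{Y}.
\end{equation*}
(The regularity needed to differentiate is automatic, as the right-hand side of \eqref{sec2:arrowhurwicz} is affine and the solution is therefore smooth.) This is precisely the oscillator of Remark~\ref{sec4:rm:cdhosc} with the self-adjoint positive operator $A^{\ast}A$ replaced by $AA^{\ast}$, and the hypothesis that $A^{\ast}$ is bounded from below with constant $\beta$ reads $\iprodY{AA^{\ast}v}{v}=\normX{A^{\ast}v}^{2}\geq\beta\normY{v}^{2}$ for all $v\in Y$.

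Next I would introduce, for $\rho\in(0,\alpha/2]$ to be fixed, the shifted energy
\begin{equation*}
	\Theta_{\rho}(t)=\normY{\dot{v}(t)+\rho v(t)}^{2}+(\rho^{2}-\alpha\rho)\normY{v(t)}^{2}+\normX{A^{\ast}v(t)}^{2}
\end{equation*}
and differentiate it along the oscillator. Using $\ddot{v}+\rho\dot{v}=(\rho-\alpha)\dot{v}-AA^{\ast}v$ and the self-adjointness of $AA^{\ast}$, a direct computation shows that the indefinite cross terms cancel and that
\begin{equation*}
	\frac{\mathrm{d}}{\mathrm{d}t}\Theta_{\rho}(t)+2\rho\,\Theta_{\rho}(t)+2(\alpha-2\rho)\normY{\dot{v}(t)+\rho v(t)}^{2}=0,\quad\forall t\geq0.
\end{equation*}
Multiplying by $\mathrm{e}^{2\rho t}$, integrating over $[0,t]$, and discarding the non-negative potential through $\normX{A^{\ast}v}^{2}\geq\beta\normY{v}^{2}$ then produces the dual counterpart of \eqref{sec4:expinequ}, namely
\begin{equation*}
	\normY{\dot{v}(t)+\rho v(t)}^{2}+(\rho^{2}-\alpha\rho+\beta)\normY{v(t)}^{2}+2(\alpha-2\rho)\int_{0}^{t}\mathrm{e}^{-2\rho(t-\tau)}\normY{\dot{v}(\tau)+\rho v(\tau)}^{2}\,\mathrm{d}\tau\leq C\mathrm{e}^{-2\rho t}
\end{equation*}
for some $C\geq0$.

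From here the argument is identical to the proof of Corollary~\ref{sec4:co:expconv}, now carried out in $Y$. In cases (i) and (ii) I would take $\rho=\alpha/2$, so that $\alpha-2\rho=0$ and the integral term drops: when $\alpha^{2}<4\beta$ the coefficient $\rho^{2}-\alpha\rho+\beta=\beta-\alpha^{2}/4>0$ yields $\normY{v(t)}^{2}=\OO(\mathrm{e}^{-\alpha t})$ directly, while when $\alpha^{2}=4\beta$ that coefficient vanishes and I would recover $\normY{v(t)}^{2}=\OO(t^{2}\mathrm{e}^{-\alpha t})$ by integrating $\mathrm{e}^{\rho t}\normY{v(t)}\leq\normY{v(0)}+\int_{0}^{t}\mathrm{e}^{\rho\tau}\normY{\dot{v}(\tau)+\rho v(\tau)}\,\mathrm{d}\tau$ against the bound $\normY{\dot{v}+\rho v}=\OO(\mathrm{e}^{-\rho t})$. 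In case (iii), $\alpha^{2}>4\beta$, I would set $\rho=(\alpha+\delta)/2$ with $\delta=\sqrt{\alpha^{2}-4\beta}$, so that $\rho^{2}-\alpha\rho+\beta=0$ and $2(\alpha-2\rho)=-2\delta$; moving the integral to the right and applying Gronwall's inequality gives $\normY{\dot{v}+\rho v}^{2}=\OO(\mathrm{e}^{-(\alpha-\delta)t})$, and the same integrating-factor estimate yields $\normY{v(t)}^{2}=\OO(\mathrm{e}^{-(\alpha-\delta)t})$. In every case the bound on $\normY{\dot{\lambda}(t)}^{2}=\normY{\dot{v}(t)}^{2}$ follows from $\normY{\dot{v}}^{2}\leq2\normY{\dot{v}+\rho v}^{2}+2\rho^{2}\normY{v}^{2}$, which is of the same order as $\normY{v}^{2}$.

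I expect the main obstacle to be the energy identity in the second step: one must identify the correct potential term $\normX{A^{\ast}v}^{2}$ together with the coefficient $(\rho^{2}-\alpha\rho)$ on $\normY{v}^{2}$, since only this precise combination makes the cross terms $\iprodY{\dot{v}}{v}$ cancel and delivers the clean identity with the dissipative remainder $2(\alpha-2\rho)\normY{\dot{v}+\rho v}^{2}$. Once this is in place, the trichotomy of the damped oscillator (under-, critically, and over-damped according to the sign of $\alpha^{2}-4\beta$) is reproduced essentially verbatim from Corollary~\ref{sec4:co:expconv}.
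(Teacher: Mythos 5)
Your proposal is correct and follows essentially the same route as the paper: the shifted energy $\Theta_{\rho}$ with the coefficient $\rho^{2}-\alpha\rho$ and potential $\normX{A^{\ast}(\lambda-\eta)}^{2}$, the identity $\frac{\mathrm{d}}{\mathrm{d}t}\Theta_{\rho}+2\rho\Theta_{\rho}+2(\alpha-2\rho)\normY{\dot{\lambda}+\rho(\lambda-\eta)}^{2}=0$, and the trichotomy with $\rho=\alpha/2$ versus $\rho=(\alpha+\delta)/2$ plus Gronwall are exactly the paper's ingredients (cf.\ \eqref{sec4:dualexp} and the deductions of Corollary \ref{sec4:co:expconv}). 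The only cosmetic difference is that you obtain the energy identity from the closed dual oscillator $\ddot{v}+\alpha\dot{v}+AA^{\ast}v=0_{Y}$ --- a structure the paper states only in the remark following the proposition --- whereas the paper's proof manipulates the first-order system directly, ``using similar derivations as in Theorem \ref{sec4:th:expconv}.''
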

\begin{proof}
	Let $(\xi,\eta)$ be the unique element in $S\times M$ and let $\rho>0$. Using similar derivations as in Theorem \ref{sec4:th:expconv}, we have for any $t\geq0$
	\begin{align*}
		\frac{\mathrm{d}}{\mathrm{d}t}\big(\normY{\dot{\lambda}(t)
		+\rho(\lambda(t)-\eta)}^{2}&+\rho^{2}\normY{\lambda(t)-\eta}^{2}+
		\normX{A^{\ast}(\lambda(t)-\eta)}^{2}\big)/2\\
		+\,\rho\big(\normY{\dot{\lambda}(t)+\rho(\lambda(t)-\eta)}^{2}&+
		\rho^{2}\normY{\lambda(t)-\eta}^{2}+
		\normX{A^{\ast}(\lambda(t)-\eta)}^{2}\big)\\
		+\,\iprodY{A(\nabla f(x(t))\,-&\,\nabla f(\xi))}{\dot{\lambda}(t)+
		\rho(\lambda(t)-\eta)}\\
		-\,2\rho\normY{\dot{\lambda}(t)&+\rho(\lambda(t)-\eta)}^{2}=0.
	\end{align*}
	From $\nabla^{2}f(\,\cdot\,)=\alpha\id$ together with \eqref{sec2:arrowhurwicz} and $A\xi-b=0_{Y}$, we obtain
	\begin{align*}
		\frac{\mathrm{d}}{\mathrm{d}t}\big(\normY{\dot{\lambda}(t)+
		\rho(\lambda(t)-\eta)}^{2}+(\rho^{2}-\alpha\rho)
		\normY{\lambda(t)-\eta}^{2}&+
		\normX{A^{\ast}(\lambda(t)-\eta)}^{2}\big)/2\\
		+\,\rho\big(\normY{\dot{\lambda}(t)+\rho(\lambda(t)-\eta)}^{2}+
		(\rho^{2}-\alpha\rho)\normY{\lambda(t)-\eta}^{2}&+
		\normX{A^{\ast}(\lambda(t)-\eta)}^{2}\big)\\
		+\,(\alpha-2\rho)\normY{\dot{\lambda}(t)+
		\rho(\lambda(t)-\eta)}^{2}&=0.
	\end{align*}
	An immediate integration over $[0,t]$ shows that there exists $C\geq0$ such that
	\begin{equation}\label{sec4:dualexp}
		\begin{split}
			\normY{\dot{\lambda}(t)+\rho(\lambda(t)-\eta)}^{2}+
			(\rho^{2}-\alpha\rho)\normY{\lambda(t)&-\eta}^{2}+
			\normX{A^{\ast}(\lambda(t)-\eta)}^{2}\\
			+\,2(\alpha-2\rho)\int_{0}^{t}\normY{\dot{\lambda}(\tau)+
			\rho(\lambda(\tau)&-\eta)}^{2}\,\mathrm{d}\tau=
			C\mathrm{e}^{-2\rho t}.
		\end{split}
	\end{equation}
	Since $A^{\ast}$ is bounded from below with constant $\beta$, we infer
	\begin{align*}
		\normY{\dot{\lambda}(t)+\rho(\lambda(t)-\eta)}^{2}+
		(\rho^{2}-\alpha\rho+\beta)\normY{\lambda(t)-\eta}^{2}\\
		+\,2(\alpha-2\rho)\int_{0}^{t}\normY{\dot{\lambda}(\tau)+
		\rho(\lambda(\tau)-\eta)}^{2}\,\mathrm{d}\tau\leq
		C\mathrm{e}^{-2\rho t}.
	\end{align*}
	The desired estimates are now readily deduced.
\end{proof}
\begin{remark}
	The above result again retrieves the well-known decay rate estimates for the classical damped harmonic oscillator. As in the previous case (and, in fact, dual to our observation in Remark \ref{sec4:rm:cdhosc}), we note that whenever $\nabla^{2}f(\,\cdot\,)=\alpha\id$, the solutions of \eqref{sec2:arrowhurwicz} further obey the second-order dynamics
	\begin{equation*}
		\ddot{\lambda}+\alpha\dot{\lambda}+\nabla
		\normX{A^{\ast}(\lambda-\bar{\lambda})}^{2}/2=0_{Y}
	\end{equation*}
	with $\bar{\lambda}$ denoting the unique element in $M$. We leave the details to the reader.
\end{remark}

In order to obtain asymptotic estimates on the primal-dual gap function in the case when $A^{\ast}$ is bounded from below, we utilize the following relation between the primal and dual variables.
\begin{lemma}\label{sec4:lm:combbound}
	Let $\nabla^{2}f(\,\cdot\,)=\alpha\id$, let $A^{\ast}:Y\to X$ be bounded from below with constant $\beta$, and let $(x,\lambda):[0,+\infty)\to X\times Y$ be a solution of \eqref{sec2:arrowhurwicz}. Then, for $(\xi,\eta)\in S\times M$, there exists $C\geq0$ such that for any $t\geq0$,
	\begin{equation*}
		\beta\norm{(x(t),\lambda(t))-(\xi,\eta)}^{2}-
		\normY{A(x(t)-\xi)}^{2}-\normX{A^{\ast}(\lambda(t)-\eta)}^{2}
		\leq C\mathrm{e}^{-2\alpha t}.
	\end{equation*}
\end{lemma}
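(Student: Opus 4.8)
The plan is to pass to the error variables $u = x-\xi$ and $v = \lambda-\eta$ and to reduce the claim to a decoupled, scalar-type estimate for the component of $u$ lying in $\ker A$. Using $T(\xi,\eta)=(0_{X},0_{Y})$ together with the hypothesis $\nabla^{2}f(\,\cdot\,)=\alpha\id$ (so that $\nabla f(x(t))-\nabla f(\xi)=\alpha\,u(t)$), the system \eqref{sec2:arrowhurwicz} rewrites, for any $t\ge 0$, as
\begin{equation*}
	\dot{u}=-\alpha u-A^{\ast}v,\qquad \dot{v}=Au.
\end{equation*}
Since $A^{\ast}$ is bounded from below with constant $\beta$, we have $\normX{A^{\ast}v}^{2}\ge\beta\normY{v}^{2}$, so the `dual part' of the left-hand side obeys $\beta\normY{v}^{2}-\normX{A^{\ast}v}^{2}\le 0$ and can be discarded. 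Recalling that $\norm{(x,\lambda)-(\xi,\eta)}^{2}=\normX{u}^{2}+\normY{v}^{2}$, it therefore suffices to prove $\beta\normX{u(t)}^{2}-\normY{Au(t)}^{2}\le C\mathrm{e}^{-2\alpha t}$.

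Next I would split $u=u_{0}+u_{1}$ orthogonally, with $u_{0}=\proj_{\ker A}u\in\ker A$ and $u_{1}\in(\ker A)^{\perp}$, whence $Au=Au_{1}$ and $\normX{u}^{2}=\normX{u_{0}}^{2}+\normX{u_{1}}^{2}$. Because $A^{\ast}$ is bounded from below, it is injective with closed range, so $(\ker A)^{\perp}=A^{\ast}(Y)$; thus $u_{1}=A^{\ast}w$ for some $w\in Y$, and the bounded-below property yields $\beta\normY{w}^{2}\le\normX{A^{\ast}w}^{2}=\normX{u_{1}}^{2}$. Combining this with $\normX{u_{1}}^{2}=\iprodX{A^{\ast}w}{u_{1}}=\iprodY{w}{Au_{1}}\le\normY{w}\,\normY{Au_{1}}$ gives $\normY{Au_{1}}^{2}\ge\beta\normX{u_{1}}^{2}$. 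Consequently,
\begin{equation*}
	\beta\normX{u}^{2}-\normY{Au}^{2}=\beta\normX{u_{0}}^{2}+\big(\beta\normX{u_{1}}^{2}-\normY{Au_{1}}^{2}\big)\le\beta\normX{u_{0}}^{2},
\end{equation*}
and the problem is reduced to controlling the kernel component $u_{0}$.

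The key observation is that $u_{0}$ decouples into a trivial linear flow. Since $\proj_{\ker A}$ is a bounded linear operator it commutes with $\tfrac{\mathrm{d}}{\mathrm{d}t}$, and since $A^{\ast}v\in A^{\ast}(Y)=(\ker A)^{\perp}$ we have $\proj_{\ker A}(A^{\ast}v)=0_{X}$; projecting the first equation of the rewritten system onto $\ker A$ therefore gives $\tfrac{\mathrm{d}}{\mathrm{d}t}u_{0}=-\alpha u_{0}$, so that $u_{0}(t)=\mathrm{e}^{-\alpha t}u_{0}(0)$ and $\normX{u_{0}(t)}^{2}=\mathrm{e}^{-2\alpha t}\normX{\proj_{\ker A}(x_{0}-\xi)}^{2}$. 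Taking $C=\beta\normX{\proj_{\ker A}(x_{0}-\xi)}^{2}$ then closes the argument. The main obstacle is the functional-analytic input of the second step, namely converting the lower bound on $A^{\ast}$ into the lower bound $\normY{Au_{1}}^{2}\ge\beta\normX{u_{1}}^{2}$ valid on $(\ker A)^{\perp}$, which rests on the closedness of $A^{\ast}(Y)$ and the identity $(\ker A)^{\perp}=A^{\ast}(Y)$; once this is in place, the discarding of the dual part and the decoupling of $u_{0}$ are immediate.
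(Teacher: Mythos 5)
Your proof is correct, and it takes a genuinely different route from the paper's. The paper argues by a Lyapunov-type computation: it differentiates the full quantity $W(t)=\beta\normX{x(t)-\xi}^{2}+\beta\normY{\lambda(t)-\eta}^{2}-\normY{A(x(t)-\xi)}^{2}-\normX{A^{\ast}(\lambda(t)-\eta)}^{2}$ along \eqref{sec2:arrowhurwicz}, observes that all coupling terms cancel by skew-symmetry, uses $\nabla^{2}f(\,\cdot\,)=\alpha\id$ to write the remainder as $\alpha W(t)+\alpha\big(\normX{A^{\ast}(\lambda(t)-\eta)}^{2}-\beta\normY{\lambda(t)-\eta}^{2}\big)$, discards the second (non-negative) term via the bounded-below hypothesis, and integrates the resulting differential inequality $\frac{\mathrm{d}}{\mathrm{d}t}W+2\alpha W\leq0$. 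You instead discard the dual bracket pointwise, split the primal error along $\ker A\oplus(\ker A)^{\perp}$, transfer the lower bound on $A^{\ast}$ to the lower bound $\normY{Au_{1}}^{2}\geq\beta\normX{u_{1}}^{2}$ on $(\ker A)^{\perp}$ (via the closed-range theorem and $(\ker A)^{\perp}=A^{\ast}(Y)$), and then solve the decoupled kernel flow $\dot{u}_{0}=-\alpha u_{0}$ explicitly; all of these steps are sound, including the duality argument $\normX{u_{1}}^{2}=\iprodY{w}{Au_{1}}\leq\normY{w}\normY{Au_{1}}$ with $\normY{w}\leq\normX{u_{1}}/\sqrt{\beta}$. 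What your route buys is a sharper and fully explicit constant, $C=\beta\normX{\proj_{\ker A}(x_{0}-\xi)}^{2}$, together with the structural insight that the entire left-hand side is controlled by the kernel component of the primal error alone; in particular, the left-hand side is non-positive for all $t$ whenever $x_{0}-\xi\perp\ker A$ (e.g., when $A$ is injective), which is invisible in the paper's estimate $W(t)\leq W(0)\mathrm{e}^{-2\alpha t}$. What the paper's route buys is economy of means: it is a three-line computation requiring no functional-analytic input beyond the definition of boundedness from below, and it follows the same template (differentiate, discard a signed term, integrate) as the proofs of Theorem \ref{sec4:th:expconv} and Proposition \ref{sec4:pr:dualexp}, which is presumably why the author chose it.
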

\begin{proof}
	Let $(\xi,\eta)$ be the unique element in $S\times M$. Using \eqref{sec2:arrowhurwicz} together with the fact that $T(\xi,\eta)=(0_{X},0_{Y})$, we have for any $t\geq0$,
	\begin{align*}
		\frac{\mathrm{d}}{\mathrm{d}t}\big(\beta\normX{x(t)-\xi}^{2}+
		\beta\normY{\lambda(t)-\eta}^{2}-\normY{A(x(t)-\xi)}^{2}&-
		\normX{A^{\ast}(\lambda(t)-\eta)}^{2}\big)/2\\
		+\,\beta\iprodX{\nabla f(x(t))-\nabla f(\xi)}{x(t)-\xi}-
		\iprodX{\nabla f(x(t))-\nabla f(&\xi)}{A^{\ast}A(x(t)-\xi)}=0.
	\end{align*}
	In view of $\nabla^{2}f(\,\cdot\,)=\alpha\id$, the above equality reads
	\begin{align*}
		\frac{\mathrm{d}}{\mathrm{d}t}\big(\beta\normX{x(t)-\xi}^{2}+
		\beta\normY{\lambda(t)-\eta}^{2}-\normY{A(x(t)-\xi)}^{2}&-
		\normX{A^{\ast}(\lambda(t)-\eta)}^{2}\big)/2\\
		+\,\alpha\big(\beta\normX{x(t)-\xi}^{2}+\beta
		\normY{\lambda(t)-\eta}^{2}-\normY{A(x(t)-\xi)}^{2}&-
		\normX{A^{\ast}(\lambda(t)-\eta)}^{2}\big)\\
		+\,\alpha\big(\normX{A^{\ast}(\lambda(t)-\eta)}^{2}-\beta
		\normY{\lambda(t)-\eta}^{2}\big)&=0.
	\end{align*}
	Since $A^{\ast}$ is bounded from below with constant $\beta$, we obtain
	\begin{align*}
		\frac{\mathrm{d}}{\mathrm{d}t}\big(\beta\normX{x(t)-\xi}^{2}+
		\beta\normY{\lambda(t)-\eta}^{2}-\normY{A(x(t)-\xi)}^{2}&-
		\normX{A^{\ast}(\lambda(t)-\eta)}^{2}\big)/2\\
		+\,\alpha\big(\beta\normX{x(t)-\xi}^{2}+\beta
		\normY{\lambda(t)-\eta}^{2}-\normY{A(x(t)-\xi)}^{2}&-
		\normX{A^{\ast}(\lambda(t)-\eta)}^{2}\big)\leq0.
	\end{align*}
	An immediate integration over $[0,t]$ then shows that there exists $C\geq0$ such that
	\begin{equation*}
		\beta\normX{x(t)-\xi}^{2}+\beta
		\normY{\lambda(t)-\eta}^{2}-\normY{A(x(t)-\xi)}^{2}-
		\normX{A^{\ast}(\lambda(t)-\eta)}^{2}\leq C\mathrm{e}^{-2\alpha t}.
		\qedhere
	\end{equation*}
\end{proof}

Combining the above results finally gives the following asymptotic estimates.
\begin{corollary}\label{sec4:co:dualexpconv}
	Let $\nabla^{2}f(\,\cdot\,)=\alpha\id$ and suppose that $A^{\ast}:Y\to X$ is bounded from below with constant $\beta$. Let $(x,\lambda):[0,+\infty)\to X\times Y$ be a solution of \eqref{sec2:arrowhurwicz}. Then, for $(\xi,\eta)\in S\times M$, the following assertions hold:
	\begin{enumerate}[(i)]
		\item If $\alpha^{2}<4\beta$, then it holds that
		\begin{align*}
			L(x(t),\eta)-L(\xi,\lambda(t))&=
			\OO\big(\mathrm{e}^{-\alpha t}\big)\ \text{as $t\to+\infty$};\\
			\norm{(\dot{x}(t),\dot{\lambda}(t))}^{2}&=
			\OO\big(\mathrm{e}^{-\alpha t}\big)\ \text{as $t\to+\infty$};\\
			\norm{(x(t),\lambda(t))-(\xi,\eta)}^{2}&=
			\OO\big(\mathrm{e}^{-\alpha t}\big)\ \text{as $t\to+\infty$};
		\end{align*}
		\item If $\alpha^{2}=4\beta$, then it holds that
		\begin{align*}
			L(x(t),\eta)-L(\xi,\lambda(t))&=
			\OO\big(t^{2}\mathrm{e}^{-\alpha t}\big)\ \text{as $t\to+\infty$};\\
			\norm{(\dot{x}(t),\dot{\lambda}(t))}^{2}&=
			\OO\big(t^{2}\mathrm{e}^{-\alpha t}\big)\ \text{as $t\to+\infty$};\\
			\norm{(x(t),\lambda(t))-(\xi,\eta)}^{2}&=
			\OO\big(t^{2}\mathrm{e}^{-\alpha t}\big)\ \text{as $t\to+\infty$};
		\end{align*}
		\item If $\alpha^{2}>4\beta$, then it holds that
		\begin{align*}
			L(x(t),\eta)-L(\xi,\lambda(t))&=
			\OO\big(\mathrm{e}^{-(\alpha-\delta)t}\big)\ \text{as $t\to+\infty$};\\
			\norm{(\dot{x}(t),\dot{\lambda}(t))}^{2}&=
			\OO\big(\mathrm{e}^{-(\alpha-\delta)t}\big)\ \text{as $t\to+\infty$};\\
			\norm{(x(t),\lambda(t))-(\xi,\eta)}^{2}&=
			\OO\big(\mathrm{e}^{-(\alpha-\delta)t}\big)\ \text{as $t\to+\infty$},
		\end{align*}
		where $\delta=\sqrt{\alpha^{2}-4\beta}$.
	\end{enumerate}
\end{corollary}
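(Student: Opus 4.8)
The plan is to assemble the three estimates from the dual decay rates of Proposition~\ref{sec4:pr:dualexp} and the coupling inequality of Lemma~\ref{sec4:lm:combbound}, treating the distance to the saddle point first, then the velocity, and finally the primal-dual gap. Throughout, let $(\xi,\eta)$ denote the unique element of $S\times M$ and write $r(t)$ for the common dual rate furnished by Proposition~\ref{sec4:pr:dualexp}, that is $r(t)=\mathrm{e}^{-\alpha t}$ if $\alpha^{2}<4\beta$, $r(t)=t^{2}\mathrm{e}^{-\alpha t}$ if $\alpha^{2}=4\beta$, and $r(t)=\mathrm{e}^{-(\alpha-\delta)t}$ if $\alpha^{2}>4\beta$. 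By Proposition~\ref{sec4:pr:dualexp} we may thus assume $\normY{\lambda(t)-\eta}^{2}=\OO(r(t))$ and $\normY{\dot{\lambda}(t)}^{2}=\OO(r(t))$.

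First I would bound the distance $\norm{(x(t),\lambda(t))-(\xi,\eta)}^{2}$. Since $A$ is continuous, $\normX{A^{\ast}(\lambda(t)-\eta)}^{2}\leq\norm{A}^{2}\normY{\lambda(t)-\eta}^{2}=\OO(r(t))$, while \eqref{sec2:arrowhurwicz} together with $A\xi-b=0_{Y}$ gives $\normY{A(x(t)-\xi)}^{2}=\normY{\dot{\lambda}(t)}^{2}=\OO(r(t))$. Inserting both into Lemma~\ref{sec4:lm:combbound} yields
\begin{equation*}
	\beta\norm{(x(t),\lambda(t))-(\xi,\eta)}^{2}\leq\normY{A(x(t)-\xi)}^{2}+\normX{A^{\ast}(\lambda(t)-\eta)}^{2}+C\mathrm{e}^{-2\alpha t}=\OO(r(t)),
\end{equation*}
where the final step uses that $\mathrm{e}^{-2\alpha t}=\OO(r(t))$ in each case (for $\alpha^{2}=4\beta$ because $\mathrm{e}^{-2\alpha t}/(t^{2}\mathrm{e}^{-\alpha t})\to0$, and for $\alpha^{2}>4\beta$ because $2\alpha>\alpha-\delta$). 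This is the third asserted estimate.

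Next I would treat the velocity. As $\nabla^{2}f(\,\cdot\,)=\alpha\id$, the gradient $\nabla f$ is affine, so $\nabla f(x(t))-\nabla f(\xi)=\alpha(x(t)-\xi)$; combining this with \eqref{sec2:arrowhurwicz} and $\nabla f(\xi)+A^{\ast}\eta=0_{X}$ gives $\dot{x}(t)=-\alpha(x(t)-\xi)-A^{\ast}(\lambda(t)-\eta)$, whence
\begin{equation*}
	\normX{\dot{x}(t)}^{2}\leq2\alpha^{2}\normX{x(t)-\xi}^{2}+2\norm{A}^{2}\normY{\lambda(t)-\eta}^{2}=\OO(r(t))
\end{equation*}
by the distance bound and Proposition~\ref{sec4:pr:dualexp}. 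Together with $\normY{\dot{\lambda}(t)}^{2}=\OO(r(t))$ this gives $\norm{(\dot{x}(t),\dot{\lambda}(t))}^{2}=\OO(r(t))$, the second assertion. For the gap, the cleanest route exploits that $f$ is now quadratic: expanding $f(x(t))-f(\xi)=\iprodX{\nabla f(\xi)}{x(t)-\xi}+\alpha\normX{x(t)-\xi}^{2}/2$ and using $\nabla f(\xi)=-A^{\ast}\eta$ together with $A\xi=b$ collapses the gap to $L(x(t),\eta)-L(\xi,\lambda(t))=\alpha\normX{x(t)-\xi}^{2}/2$, so it inherits the distance rate $\OO(r(t))$ at once; alternatively one may invoke \eqref{sec3:cauchyschwarz}, which bounds the gap by $\norm{(x(t),\lambda(t))-(\xi,\eta)}\,\norm{(\dot{x}(t),\dot{\lambda}(t))}$, and multiply the two $\OO(r(t)^{1/2})$ factors.

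Since Lemma~\ref{sec4:lm:combbound} and Proposition~\ref{sec4:pr:dualexp} carry the analytic weight, I expect no serious obstacle here: the corollary is essentially a bookkeeping exercise. The only point requiring genuine care is matching the exponential orders — in particular verifying that the $\mathrm{e}^{-2\alpha t}$ remainder from the Lemma is dominated by $r(t)$ in the critical case $\alpha^{2}=4\beta$, where $r(t)$ carries the slower factor $t^{2}\mathrm{e}^{-\alpha t}$ — and, if one follows the Cauchy--Schwarz route for the gap, keeping track of the square roots so that the product again lands at $\OO(r(t))$.
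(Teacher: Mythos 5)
Your proof is correct, but it takes a genuinely different route from the paper's. The paper does not pass through Proposition~\ref{sec4:pr:dualexp} as a black box: it goes back to the two energy inequalities \eqref{sec4:primalexp} and \eqref{sec4:dualexp}, adds them (using $\nabla^{2}f(\,\cdot\,)=\alpha\id$ and $A\xi-b=0_{Y}$), absorbs the cross terms $\normY{A(x(t)-\xi)}^{2}+\normX{A^{\ast}(\lambda(t)-\eta)}^{2}$ via Lemma~\ref{sec4:lm:combbound}, and then re-runs the case analysis of Theorem~\ref{sec4:th:expconv}/Corollary~\ref{sec4:co:expconv} (direct estimate when $\rho^{2}-\alpha\rho+\beta>0$, the integration trick in the critical case, Gronwall when $\alpha^{2}>4\beta$) on the combined Lyapunov quantity $\kappa$. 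You instead bootstrap pointwise from the \emph{conclusions} of Proposition~\ref{sec4:pr:dualexp}: the identity $\dot{\lambda}(t)=A(x(t)-\xi)$ and the bound $\normX{A^{\ast}(\lambda(t)-\eta)}\leq\norm{A}\normY{\lambda(t)-\eta}$ feed the dual rates into Lemma~\ref{sec4:lm:combbound} to control the full distance; the affine identity $\dot{x}(t)=-\alpha(x(t)-\xi)-A^{\ast}(\lambda(t)-\eta)$ then gives the velocity; and the exact collapse $L(x(t),\eta)-L(\xi,\lambda(t))=\alpha\normX{x(t)-\xi}^{2}/2$ (valid here since $f$ is exactly quadratic and $\nabla f(\xi)=-A^{\ast}\eta$, $A\xi=b$) finishes the gap — all three identities check out, as does the domination $\mathrm{e}^{-2\alpha t}=\OO(r(t))$ in each of the three regimes. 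What each approach buys: the paper's combined-energy argument is self-contained and treats all quantities simultaneously through one inequality, so it would survive even if the gap did not reduce to a pure quadratic; your argument is shorter and more elementary \emph{given} Proposition~\ref{sec4:pr:dualexp}, avoids repeating any Lyapunov/Gronwall analysis, and makes transparent exactly which structural facts ($\dot{\lambda}=A(x-\xi)$, affineness of $\nabla f$, quadratic gap) drive the transfer from dual to primal rates. Either way the logical dependencies are identical (Proposition~\ref{sec4:pr:dualexp} and Lemma~\ref{sec4:lm:combbound} share the corollary's hypotheses), so your proof is a valid and clean alternative.
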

\begin{proof}
	Let $(\xi,\eta)$ be the unique element in $S\times M$ and let $\rho>0$. Since $\nabla^{2}f(\,\cdot\,)=\alpha\id$, by combining \eqref{sec4:primalexp} with \eqref{sec4:dualexp} and subsequently using \eqref{sec2:arrowhurwicz} together with the fact that $A\xi-b=0_{Y}$, there exists $C\geq0$ such that for any $t\geq0$,
	\begin{align*}
		\normX{\dot{x}(t)+\rho(x(t)-\xi)}^{2}+\normY{\dot{\lambda}(t)+
		\rho(\lambda(t)&-\eta)}^{2}+\normY{A(x(t)-\xi)}^{2}\\
		+\,(\rho^{2}-\alpha\rho)\big(\normX{x(t)-\xi}^{2}+
		\normY{\lambda(t)-\eta&}^{2}\big)+
		\normX{A^{\ast}(\lambda(t)-\eta)}^{2}\\
		+\,2(\alpha-2\rho)\int_{0}^{t}\mathrm{e}^{-2\rho(t-\tau)}\kappa(\tau)&\,
		\mathrm{d}\tau\leq C\mathrm{e}^{-2\rho t},
	\end{align*}
	where $\kappa(\tau)=\normX{\dot{x}(\tau)+\rho(x(\tau)-\xi)}^{2}+\normY{\dot{\lambda}(\tau)+\rho(\lambda(\tau)-\eta)}^{2}$. Since $A^{\ast}$ is bounded from below with constant $\beta$, we observe from Lemma \ref{sec4:lm:combbound} that there exists $\tilde{C}\geq0$ such that
	\begin{equation*}
		\beta\big(\normX{x(t)-\xi}^{2}+
		\normY{\lambda(t)-\eta}^{2}\big)-\normY{A(x(t)-\xi)}^{2}-
		\normX{A^{\ast}(\lambda(t)-\eta)}^{2}\leq\tilde{C}\mathrm{e}^{-2\alpha t}.
	\end{equation*}
	In view of the above inequalities, we obtain
	\begin{align*}
		\normX{\dot{x}(t)+\rho(x(t)-\xi)}^{2}+\normY{\dot{\lambda}(t)&+
			\rho(\lambda(t)-\eta)}^{2}\\
		+\,(\rho^{2}-\alpha\rho+\beta)\big(\normX{x(t)-\xi}^{2}&+
		\normY{\lambda(t)-\eta}^{2}\big)\\
		+\,2(\alpha-2\rho)\int_{0}^{t}\mathrm{e}^{-2\rho(t-\tau)}\kappa(\tau)\,
		\mathrm{d}\tau&\leq C\mathrm{e}^{-2\rho t}+\tilde{C}\mathrm{e}^{-2\alpha t}.
	\end{align*}
	The desired estimates are now easily derived.
\end{proof}

\section{Structured convex minimization}\label{sec5}
In this section, we aim to extend some of our previous results on the Arrow--Hurwicz differential system \eqref{sec1:arrowhurwicz} to the more general case of solving structured convex minimization problems. Let $X$, $Y$ and $Z$ be real Hilbert spaces, and let $X\times Y\times Z$ be endowed with the product structure $\iprod{\,\cdot\,}{\,\cdot\,}=\iprodX{\,\cdot\,}{\,\cdot\,}+\iprodY{\,\cdot\,}{\,\cdot\,}+\iprodZ{\,\cdot\,}{\,\cdot\,}$ and associated norm $\norm{\,\cdot\,}$. Consider the structured minimization problem
\begin{equation}
\renewcommand{\theequation}{SP}\tag{\theequation}\label{sec5:strucproblem}
	\inf\,\{f(x)+g(y)\mid Ax+By-c=0_{Z}\}
\end{equation}
and suppose that the following assumptions are verified:
\begin{enumerate}[\hspace{6pt}({A}1)$^{\prime}$]
	\item $f:X\to\rl$ and $g:Y\to\rl$ are convex and continuously differentiable;
	\item $\nabla f:X\to X$ and $\nabla g:Y\to Y$ are Lipschitz continuous on
	bounded sets;
	\item $A:X\to Z$ and $B:Y\to Z$ are linear and continuous, and $c\in Z$.
\end{enumerate}

We associate with \eqref{sec5:strucproblem} the Lagrangian
\begin{align*}
	L:X\times Y\times Z&\longrightarrow\rl\\
	(x,y,\lambda)&\longmapsto f(x)+g(y)+\iprodZ{\lambda}{Ax+By-c},
\end{align*}
which, given the above assumptions, is a convex function with respect to $(x,y)\in X\times Y$ and a concave (in fact, affine) function with respect to $\lambda\in Z$. We denote by $S\times U\times M\subset X\times Y\times Z$ the (possibly empty) set of saddle points of the Lagrangian $L$.

Consider now the generalized Arrow--Hurwicz differential system
\begin{equation}
\renewcommand{\theequation}{GAH}\tag{\theequation}\label{sec5:genarrowhurwicz}
	\begin{cases}
		\dot{x}+\nabla f(x)+A^{\ast}\lambda=0_{X}\\
		\hspace{1.48pt}\dot{y}+\nabla g(y)+B^{\ast}\lambda=0_{Y}\\
		\hspace{2.25pt}\dot{\lambda}+c-Ax-By=0_{Z}
	\end{cases}
\end{equation}
with initial data $(x_{0},y_{0},\lambda_{0})\in X\times Y\times Z$ and observe that \eqref{sec5:genarrowhurwicz} admits, for any ini- tial data, a unique (classical) solution $(x,y,\lambda):[0,+\infty)\to X\times Y\times Z$. Recall that the zeros of the maximally monotone operator
\begin{align*}
	T:X\times Y\times Z&\longrightarrow X\times Y\times Z\\
	(x,y,\lambda)&\longmapsto(\nabla f(x)+A^{\ast}\lambda,
	\nabla g(y)+B^{\ast}\lambda,c-Ax-By)
\end{align*}
are nothing but the saddle points of the Lagrangian $L$.

The following discussion suggests that our results on the \eqref{sec2:arrowhurwicz} differential system directly convey to the \eqref{sec5:genarrowhurwicz} evolution system for solving the structured convex minimization problem \eqref{sec5:strucproblem}.
\begin{theorem}
	Let $S\times U\times M$ be non-empty and let $(x,y,\lambda):[0,+\infty)\to X\times Y\times Z$ be a solution of \eqref{sec5:genarrowhurwicz}. Then, for any $(\xi,\psi,\eta)\in S\times U\times M$,
	\begin{enumerate}[(i)]
		\item $\lim_{t\to+\infty}\norm{(x(t),y(t),\lambda(t))-(\xi,\psi,\eta)}$
		exists;
		\item $\lim_{t\to+\infty}\norm{(\dot{x}(t),\dot{y}(t),\dot{\lambda}(t))}$
		exists;
		\item it holds that
		\begin{equation*}
			\int_{0}^{\infty}L(x(\tau),y(\tau),\eta)-L(\xi,\psi,\lambda(\tau))\,
			\mathrm{d}\tau<+\infty.
		\end{equation*}
	\end{enumerate}
\end{theorem}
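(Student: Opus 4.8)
The plan is to prove the theorem by reduction. I would recast the structured problem \eqref{sec5:strucproblem} as an instance of the linearly constrained problem \eqref{sec1:cvxproblem}, in such a way that the generalized system \eqref{sec5:genarrowhurwicz} becomes literally the Arrow--Hurwicz system \eqref{sec2:arrowhurwicz} for the recast data. The three assertions would then follow at once by invoking Theorem \ref{sec2:th:existence} and Proposition \ref{sec2:pr:lagrangian}, with no further analysis.

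Concretely, I would set $\hat{X}=X\times Y$ with its natural product inner product and define $\hat{f}:\hat{X}\to\rl$ by $\hat{f}(x,y)=f(x)+g(y)$, the operator $\hat{A}:\hat{X}\to Z$ by $\hat{A}(x,y)=Ax+By$, and $\hat{b}=c$. Under this identification \eqref{sec5:strucproblem} reads $\inf\,\{\hat{f}(\hat{x})\mid\hat{A}\hat{x}-\hat{b}=0_{Z}\}$, which is precisely of the form \eqref{sec1:cvxproblem}. The first step would be to check that (A1)$^{\prime}$--(A3)$^{\prime}$ entail (A1)--(A3) for this data: $\hat{f}$ is convex and continuously differentiable with $\nabla\hat{f}(x,y)=(\nabla f(x),\nabla g(y))$ Lipschitz continuous on bounded sets, and $\hat{A}$ is linear and continuous.

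The only computation worth spelling out is the adjoint. I would verify that $\hat{A}^{\ast}:Z\to\hat{X}$ acts by $\hat{A}^{\ast}\lambda=(A^{\ast}\lambda,B^{\ast}\lambda)$, so that, writing $\hat{x}=(x,y)$, the first two lines of \eqref{sec5:genarrowhurwicz} collapse into $\dot{\hat{x}}+\nabla\hat{f}(\hat{x})+\hat{A}^{\ast}\lambda=0_{\hat{X}}$ while the third reads $\dot{\lambda}+\hat{b}-\hat{A}\hat{x}=0_{Z}$; this is exactly \eqref{sec2:arrowhurwicz} for the recast problem. I would also observe that the associated Lagrangian satisfies $\hat{L}(\hat{x},\lambda)=\hat{f}(\hat{x})+\iprodZ{\lambda}{\hat{A}\hat{x}-\hat{b}}=L(x,y,\lambda)$, whence the primal-dual gap functions agree and the saddle-point set $\hat{S}\times M$ of $\hat{L}$ is identified with $S\times U\times M$ via $((\xi,\psi),\eta)\leftrightarrow(\xi,\psi,\eta)$; in particular $\hat{S}\times M$ is non-empty.

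With the reduction established, assertion {\it (ii)} is Theorem \ref{sec2:th:existence}{\it (ii)}, and assertions {\it (i)} and {\it (iii)} are Proposition \ref{sec2:pr:lagrangian}{\it (i)} and {\it (ii)}, respectively, read through the above identification. I do not expect a genuine obstacle: the entire difficulty, such as it is, lies in the bookkeeping --- confirming that the product structure on $\hat{X}$ reproduces the norms and inner products appearing in \eqref{sec5:genarrowhurwicz}, and that $\hat{A}^{\ast}$ is computed correctly. Once these routine identifications are in place, nothing further is required.
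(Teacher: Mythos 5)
Your proposal is correct and takes essentially the same route as the paper: the paper offers no separate proof of this theorem, asserting instead that the results on \eqref{sec2:arrowhurwicz} ``directly convey'' to \eqref{sec5:genarrowhurwicz}, and the reduction you spell out --- $\hat{X}=X\times Y$, $\hat{f}(x,y)=f(x)+g(y)$, $\hat{A}(x,y)=Ax+By$, $\hat{b}=c$, with $\hat{A}^{\ast}\lambda=(A^{\ast}\lambda,B^{\ast}\lambda)$ --- is precisely the identification that makes this so, after which the three assertions are Theorem \ref{sec2:th:existence}\emph{(ii)} and Proposition \ref{sec2:pr:lagrangian}\emph{(i)--(ii)} applied to the recast data.
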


Let the Ces{\`a}ro average $(\sigma,\tau,\omega):(0,+\infty)\to X\times Y\times Z$ of a solution of \eqref{sec5:genarrowhurwicz} be defined by
\begin{equation*}
	(\sigma(t),\tau(t),\omega(t))=\frac{1}{t}\int_{0}^{t}
	(x(\tau),y(\tau),\lambda(\tau))\,\mathrm{d}\tau.
\end{equation*}
We have the following asymptotic estimate on the primal-dual gap function.
\begin{proposition}
	Let $S\times U\times M$ be non-empty and let $(\sigma,\tau,\omega):(0,+\infty)\to X\times Y\times Z$ be the Ces{\`a}ro average of a solution of \eqref{sec5:genarrowhurwicz}. Then, for any $(\xi,\psi,\eta)\in S\times U\times M$, it holds that
	\begin{equation*}
		L(\sigma(t),\tau(t),\eta)-L(\xi,\psi,\omega(t))=\OO\Big(\frac{1}{t}\Big)\
		\text{as $t\to+\infty$}.
	\end{equation*}
	Moreover, there exists $(\bar{x},\bar{y},\bar{\lambda})\in S\times U\times M$ such that $(\sigma(t),\tau(t),\omega(t))\rightharpoonup(\bar{x},\bar{y},\bar{\lambda})$ weakly in $X\times Y\times Z$ as $t\to+\infty$.
\end{proposition}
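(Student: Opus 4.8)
The plan is to transcribe the proof of Proposition~\ref{sec2:pr:weakergodic} into the three--block setting, the only genuinely new ingredient being the structured counterpart of the Lagrangian bound~\eqref{sec2:lagrangianbound}. First I would record that, for any $(x,y,\lambda),(\xi,\psi,\eta)\in X\times Y\times Z$,
\begin{equation*}
	\iprod{T(x,y,\lambda)}{(x,y,\lambda)-(\xi,\psi,\eta)}\geq L(x,y,\eta)-L(\xi,\psi,\lambda).
\end{equation*}
This is verified by expanding the left-hand side into its three components: the subgradient inequalities $\iprodX{\nabla f(x)}{x-\xi}\geq f(x)-f(\xi)$ and $\iprodY{\nabla g(y)}{y-\psi}\geq g(y)-g(\psi)$ supply the smooth part, whereas the coupling terms $\iprodX{A^{\ast}\lambda}{x-\xi}$, $\iprodY{B^{\ast}\lambda}{y-\psi}$ and $\iprodZ{c-Ax-By}{\lambda-\eta}$ recombine, after passing the adjoints across via $\iprodX{A^{\ast}\lambda}{x-\xi}=\iprodZ{\lambda}{A(x-\xi)}$ and $\iprodY{B^{\ast}\lambda}{y-\psi}=\iprodZ{\lambda}{B(y-\psi)}$, into exactly $\iprodZ{\eta}{Ax+By-c}-\iprodZ{\lambda}{A\xi+B\psi-c}$, which is the linear part of $L(x,y,\eta)-L(\xi,\psi,\lambda)$.

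Next I would anchor this bound to a saddle point. Given $(\xi,\psi,\eta)\in S\times U\times M$ we have $T(\xi,\psi,\eta)=(0_{X},0_{Y},0_{Z})$, and since \eqref{sec5:genarrowhurwicz} reads $(\dot{x},\dot{y},\dot{\lambda})=-T(x,y,\lambda)$, the bound yields the exact analogue of~\eqref{sec2:lagdec}, namely
\begin{equation*}
	\frac{\mathrm{d}}{\mathrm{d}t}\norm{(x(t),y(t),\lambda(t))-(\xi,\psi,\eta)}^{2}/2+L(x(t),y(t),\eta)-L(\xi,\psi,\lambda(t))\leq0.
\end{equation*}
Integrating over $[0,t]$, discarding the non-negative distance term, and dividing by $t>0$ gives
\begin{equation*}
	\frac{1}{t}\int_{0}^{t}L(x(s),y(s),\eta)-L(\xi,\psi,\lambda(s))\,\mathrm{d}s\leq\frac{1}{2t}\norm{(x_{0},y_{0},\lambda_{0})-(\xi,\psi,\eta)}^{2}.
\end{equation*}
Because $(x,y)\mapsto L(x,y,\eta)$ is jointly convex while $\lambda\mapsto-L(\xi,\psi,\lambda)$ is convex (indeed affine), a single application of Jensen's inequality transfers this time-average to the gap evaluated at the Ces\`{a}ro averages $(\sigma(t),\tau(t),\omega(t))$, whence $L(\sigma(t),\tau(t),\eta)-L(\xi,\psi,\omega(t))\leq\tfrac{1}{2t}\norm{(x_{0},y_{0},\lambda_{0})-(\xi,\psi,\eta)}^{2}$ and the claimed $\OO(1/t)$ estimate follows.

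The weak convergence is then obtained exactly as in Proposition~\ref{sec2:pr:weakergodic}, from the Opial--Passty lemma applied to the set $S\times U\times M$: the operator $T$ is maximally monotone, and item~{\it (i)} of the preceding Theorem guarantees that $t\mapsto\norm{(x(t),y(t),\lambda(t))-(\xi,\psi,\eta)}$ converges for every $(\xi,\psi,\eta)\in S\times U\times M$ (Fej\'{e}r monotonicity with respect to that set), so Passty's ergodic convergence result forces $(\sigma(t),\tau(t),\omega(t))$ to converge weakly to some element of $S\times U\times M$. I expect the only step requiring genuine care to be the structured Lagrangian bound above --- specifically the bookkeeping that cancels the three coupling terms against the Lagrangian's linear part --- together with the verification that $L(\,\cdot\,,\cdot\,,\eta)$ is jointly convex in $(x,y)$, which is what legitimizes the Jensen step; everything downstream is a verbatim repetition of the two-block argument.
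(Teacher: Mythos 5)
Your proposal is correct and takes exactly the approach the paper intends: the paper states this proposition without a separate proof precisely because it is the verbatim transcription of Proposition~\ref{sec2:pr:weakergodic} to the three-block setting, which is what you carried out --- the structured analogue of \eqref{sec2:lagrangianbound} (with the coupling terms recombining as you describe), the integrated descent inequality anchored at a saddle point, the Jensen step using joint convexity of $L(\,\cdot\,,\,\cdot\,,\eta)$ and affinity of $L(\xi,\psi,\,\cdot\,)$, and the Opial--Passty lemma applied to $S\times U\times M$. Your level of detail on the new ingredient (the three-term bookkeeping) is exactly where the only genuine, if routine, work lies.
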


Let us now investigate the asymptotic properties of the solutions of \eqref{sec5:genarrowhurwicz} under the more stringent assumption that
\begin{align*}
	f+g:X\times Y&\longrightarrow\rl\\
	(x,y)&\longmapsto f(x)+g(y)
\end{align*}
is $\alpha$-strongly convex (or, equivalently, $\nabla(f+g):X\times Y\to X\times Y$ is $\alpha$-strongly mo- notone). In this case, the following asymptotic properties are verified.
\begin{theorem}\label{sec5:th:asympprop}
	Let $\nabla(f+g):X\times Y\to X\times Y$ be $\alpha$-strongly monotone, let $S\times U\times M$ be non-empty, and let $(x,y,\lambda):[0,+\infty)\to X\times Y\times Z$ be a solution of \eqref{sec5:genarrowhurwicz} with initial data $(x_{0},y_{0},\lambda_{0})\in X\times Y\times Z$. Then, for any $(\xi,\psi,\eta)\in S\times U\times M$, it holds that
	\begin{align*}
		L(x(t),y(t),\eta)-L(\xi,\psi,\lambda(t))&=
		o\Big(\frac{1}{\sqrt{t}}\Big)\ \text{as $t\to+\infty$};\\
		\norm{(\dot{x}(t),\dot{y}(t),\dot{\lambda}(t))}&=
		o\Big(\frac{1}{\sqrt{t}}\Big)\ \text{as $t\to+\infty$}.
	\end{align*}
	Moreover, $(x(t),y(t),\lambda(t))$ converges weakly, as $t\to+\infty$, to $\proj_{S\times U\times M}(x_{0},y_{0},\lambda_{0})\in S\times U\times M$.
\end{theorem}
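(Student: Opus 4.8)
The plan is to observe that the generalized system~\eqref{sec5:genarrowhurwicz} is nothing but the original Arrow--Hurwicz system~\eqref{sec2:arrowhurwicz} for a suitably aggregated problem, and then to invoke Theorem~\ref{sec3:th:weakconv} together with its corollary verbatim. First I would set $W=X\times Y$, equipped with the inner product $\iprodX{\,\cdot\,}{\,\cdot\,}+\iprodY{\,\cdot\,}{\,\cdot\,}$, and introduce the aggregated data $F:W\to\rl$, $F(x,y)=f(x)+g(y)$, together with the operator $\mathcal{A}:W\to Z$, $\mathcal{A}(x,y)=Ax+By$, and the fixed $c\in Z$. A direct computation gives $\nabla F(x,y)=(\nabla f(x),\nabla g(y))$ and $\mathcal{A}^{\ast}\lambda=(A^{\ast}\lambda,B^{\ast}\lambda)$. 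Writing $w=(x,y)$, the first two lines of~\eqref{sec5:genarrowhurwicz} collapse into $\dot{w}+\nabla F(w)+\mathcal{A}^{\ast}\lambda=0_{X\times Y}$, while the third line reads $\dot{\lambda}+c-\mathcal{A}w=0_{Z}$. This is precisely the system~\eqref{sec2:arrowhurwicz} for the problem $\inf\{F(w)\mid\mathcal{A}w-c=0_{Z}\}$ posed over the Hilbert space $W\times Z$, whose product inner product coincides with the structure fixed on $X\times Y\times Z$ by associativity of the Hilbertian product.

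Next I would check that the hypotheses of Theorem~\ref{sec3:th:weakconv} are inherited. Assumptions (A1)$^{\prime}$--(A3)$^{\prime}$ immediately yield that $F$ is convex and continuously differentiable, that $\nabla F$ is Lipschitz continuous on bounded sets, and that $\mathcal{A}$ is linear and continuous, i.e.\ (A1)--(A3) hold for the aggregated data. The standing hypothesis of the theorem states exactly that $\nabla F=\nabla(f+g)$ is $\alpha$-strongly monotone, which is~(A4); in particular $F$ is strongly convex, so the aggregated primal problem admits a unique minimizer $\bar{w}=(\bar{x},\bar{y})$ and $S\times U$ is reduced to the singleton $\{\bar{w}\}$. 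Finally, the Lagrangian of the aggregated problem is $F(w)+\iprodZ{\lambda}{\mathcal{A}w-c}=L(x,y,\lambda)$, so its saddle-point set equals $(S\times U)\times M$, which is non-empty by hypothesis. Every assumption of Theorem~\ref{sec3:th:weakconv} thus transfers.

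With this identification in hand, the estimates on the primal--dual gap and on $\norm{(\dot{x}(t),\dot{y}(t),\dot{\lambda}(t))}$, as well as the weak convergence of $(x(t),y(t),\lambda(t))$ to a saddle point, follow at once from Theorem~\ref{sec3:th:weakconv} applied to the aggregated system. To identify the weak limit I would transcribe the corollary following Theorem~\ref{sec3:th:weakconv}: setting $\zeta=(\xi,\psi)$, the monotone cross terms $\iprod{\nabla F(w(t))-\nabla F(\zeta)}{\bar{w}-\zeta}+\iprod{\nabla F(\bar{w})-\nabla F(\zeta)}{w(t)-\zeta}$ vanish because $S\times U=\{\bar{w}\}$ forces $\bar{w}=\zeta$, whence the projection-theorem argument goes through unchanged and delivers $(x(t),y(t),\lambda(t))\rightharpoonup\proj_{S\times U\times M}(x_{0},y_{0},\lambda_{0})$. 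I do not anticipate any genuine obstacle: the entire proof is a bookkeeping reduction, and the only points demanding care are the adjoint identity $\mathcal{A}^{\ast}\lambda=(A^{\ast}\lambda,B^{\ast}\lambda)$ and the verification that the aggregated saddle-point set is exactly $(S\times U)\times M$ rather than something larger---a fact secured by the strong convexity of $f+g$, which pins down the primal solution uniquely.
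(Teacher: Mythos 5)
Your reduction is correct and is precisely the route the paper intends: the paper states this theorem without an explicit proof, asserting only that the results for \eqref{sec2:arrowhurwicz} ``directly convey'' to \eqref{sec5:genarrowhurwicz}, and your aggregation $W=X\times Y$, $F=f+g$, $\mathcal{A}=(A\ B)$, $\mathcal{A}^{\ast}\lambda=(A^{\ast}\lambda,B^{\ast}\lambda)$ is exactly the bookkeeping that makes this rigorous. The transfer of assumptions (A1)--(A5), the identification of the aggregated saddle-point set with $S\times U\times M$, and the transcription of the projection corollary (using that strong convexity of $f+g$ collapses $S\times U$ to a singleton) are all handled correctly.
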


Assuming, moreover, that $(A\ B)^{\ast}:Z\to X\times Y$ is bounded from below, i.e.,
\begin{equation*}
	\exists\beta>0\ \forall z\in Z\quad
	\norm{(A\ B)^{\ast}z}_{X\times Y}^{2}\geq\beta\normZ{z}^{2},
\end{equation*}
we have the following refined asymptotic estimates.
\begin{corollary}
	Under the hypotheses of Theorem \ref{sec5:th:asympprop}, let $(A\ B)^{\ast}:Z\to X\times Y$ be bounded from below. Then, for $(\xi,\psi,\eta)\in S\times U\times M$, it holds that
	\begin{align*}
		L(x(t),y(t),\eta)-L(\xi,\psi,\lambda(t))&=
		o\Big(\frac{1}{t}\Big)\ \text{as $t\to+\infty$};\\
		\norm{(x(t),y(t),\lambda(t))-(\xi,\psi,\eta)}&=
		o\Big(\frac{1}{\sqrt{t}}\Big)\ \text{as $t\to+\infty$}.
	\end{align*}
	Consequently, $(x(t),y(t),\lambda(t))$ converges strongly, as $t\to+\infty$, to the unique element in $S\times U\times M$.
\end{corollary}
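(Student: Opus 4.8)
The plan is to recognise that the generalized Arrow--Hurwicz system \eqref{sec5:genarrowhurwicz} is, after grouping the two primal variables into one, nothing but an instance of the Arrow--Hurwicz system \eqref{sec2:arrowhurwicz}, so that the claimed estimates follow by a verbatim application of Proposition \ref{sec3:pr:strongconv}. Concretely, I would set $\tilde{X}=X\times Y$, keep the dual space $Z$, and introduce the combined data $\tilde{f}:\tilde{X}\to\rl$, $\tilde{f}(x,y)=f(x)+g(y)$, together with the block operator $\tilde{A}=(A\ B):\tilde{X}\to Z$, $\tilde{A}(x,y)=Ax+By$, and constant $\tilde{b}=c$. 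A short computation gives the block adjoint $\tilde{A}^{\ast}z=(A^{\ast}z,B^{\ast}z)$, so that, writing $w=(x,y)$, the first two lines of \eqref{sec5:genarrowhurwicz} coalesce into $\dot{w}+\nabla\tilde{f}(w)+\tilde{A}^{\ast}\lambda=0_{\tilde{X}}$ while the third reads $\dot{\lambda}+\tilde{b}-\tilde{A}w=0_{Z}$. This is precisely \eqref{sec2:arrowhurwicz} for the pair $(w,\lambda)$.

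Next I would check that the hypotheses of Proposition \ref{sec3:pr:strongconv} hold for the reduced system. The assumption that $\nabla(f+g)$ is $\alpha$-strongly monotone is exactly the statement that $\nabla\tilde{f}$ is $\alpha$-strongly monotone, and the standing assumption (A2)$^{\prime}$ renders $\nabla\tilde{f}$ Lipschitz continuous on bounded sets. The added hypothesis that $(A\ B)^{\ast}$ is bounded from below with constant $\beta$ is verbatim the condition that $\tilde{A}^{\ast}$ is bounded from below, which forces the reduced set of saddle points to be the singleton $\{(\tilde{\xi},\eta)\}$ with $\tilde{\xi}=(\xi,\psi)$. Non-emptiness is inherited from the hypotheses of Theorem \ref{sec5:th:asympprop} through the identification $S\times U\times M\leftrightarrow(S\times U)\times M$.

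I would then observe that the reduced Lagrangian $\tilde{L}(w,\lambda)=\tilde{f}(w)+\iprodZ{\lambda}{\tilde{A}w-\tilde{b}}$ coincides with the structured Lagrangian $L(x,y,\lambda)$ under the identification $w=(x,y)$, so the primal-dual gap function is preserved, $\tilde{L}(w(t),\eta)-\tilde{L}(\tilde{\xi},\lambda(t))=L(x(t),y(t),\eta)-L(\xi,\psi,\lambda(t))$. Since the product norm on $X\times Y\times Z$ agrees with the product norm on $\tilde{X}\times Z$, one also has $\norm{(w(t),\lambda(t))-(\tilde{\xi},\eta)}=\norm{(x(t),y(t),\lambda(t))-(\xi,\psi,\eta)}$. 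Applying Proposition \ref{sec3:pr:strongconv} to the pair $(w,\lambda)$ therefore yields at once the estimate $L(x(t),y(t),\eta)-L(\xi,\psi,\lambda(t))=o(1/t)$, the estimate $\norm{(x(t),y(t),\lambda(t))-(\xi,\psi,\eta)}=o(1/\sqrt{t})$, and the strong convergence of $(x(t),y(t),\lambda(t))$ to the unique element $(\xi,\psi,\eta)$ of $S\times U\times M$ as $t\to+\infty$.

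The argument is in essence a bookkeeping reduction, so there is no genuine analytic obstacle to overcome. The only points deserving care are the consistency of the block-adjoint identity $\tilde{A}^{\ast}=(A^{\ast},B^{\ast})$ and the verification that the structured saddle set maps onto the reduced saddle set under the grouping $(x,y)\mapsto w$; once these identifications are in place, every estimate and the strong-convergence conclusion of Proposition \ref{sec3:pr:strongconv} transfer unchanged to \eqref{sec5:genarrowhurwicz}.
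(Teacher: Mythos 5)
Your proposal is correct and coincides with the paper's (implicit) argument: Section~\ref{sec5} deliberately omits proofs because the results are obtained by exactly your reduction, grouping $w=(x,y)$, $\tilde f(w)=f(x)+g(y)$, $\tilde A=(A\ B)$, $\tilde b=c$, and invoking Proposition~\ref{sec3:pr:strongconv} verbatim. The identifications you single out (the block adjoint $\tilde A^{\ast}z=(A^{\ast}z,B^{\ast}z)$ and the correspondence of saddle sets) are precisely the bookkeeping the paper relies on, so there is nothing to add.
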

\begin{remark}
	The structured convex minimization problem \eqref{sec5:strucproblem} has recently been~ap- proached by Attouch et al. \cite{HA-ZC-JF-HR:22b} and Bo\c{t} and Nguyen \cite{RIB-DKN:21} using the second-order non-autonomous differential system
	\begin{equation}
	\renewcommand{\theequation}{AAH}\tag{\theequation}\label{sec5:accarrowhurwicz}
		\begin{cases}
			\hspace{30.5pt}\ddot{x}+\dfrac{\nu}{t}\dot{x}+\nabla_{x}L_{\mu}
			(x,y,\lambda+\theta t\dot{\lambda})=0_{X}\\[1.5ex]
			\hspace{31.5pt}\ddot{y}+\dfrac{\nu}{t}\dot{y}+\nabla_{y}L_{\mu}
			(x,y,\lambda+\theta t\dot{\lambda})=0_{Y}\\[1.5ex]
			\ddot{\lambda}+\dfrac{\nu}{t}\dot{\lambda}-\nabla_{\lambda}L_{\mu}
			(x+\theta t\dot{x},y+\theta t\dot{y},\lambda)=0_{Z}
		\end{cases}
	\end{equation}
	with $\nu\geq3$, $\mu\geq0$, $\theta\in[1/(\nu-1),1/2]$, and initial data $(x_{0},y_{0},\lambda_{0}),(v_{0},w_{0},\mu_{0})\in X\times Y\times Z$. As a decisive feature, the \eqref{sec5:accarrowhurwicz} dynamics are governed by `asymptotically vanishing damping coefficients' which relate the above system to Nesterov's accelerated gradient method (see Nesterov \cite{YN:83}, Su et al. \cite{WS-SB-EC:16}), and additional `exploration terms' within the partial gradients of the augmented Lagrangian $L_{\mu}$ associated with \eqref{sec5:strucproblem}. This particular structure allows for remarkably fast mini-maximizing properties with respect to the Lagrangian $L$ given the sole convexity hypothesis on the objective function of \eqref{sec5:strucproblem}. In particular, the (classical) solutions $(x,y,\lambda):[t_{0},+\infty)\to X\times Y\times Z$ of \eqref{sec5:accarrowhurwicz} with $t_{0}>0$ evolve, for any $(\xi,\psi,\eta)\in S\times U\times M$, according to the asymptotic estimate (see Attouch et al. \cite{HA-ZC-JF-HR:22b}, Bo\c{t} and Nguyen \cite{RIB-DKN:21})
	\begin{equation*}
		L(x(t),y(t),\eta)-L(\xi,\psi,\lambda(t))=\OO\Big(\frac{1}{t^{2}}\Big)\
		\text{as $t\to+\infty$}.
	\end{equation*}
	If, in addition, $\nu>3$ and $\theta\in(1/(\nu-1),1/2]$, then the solutions of \eqref{sec5:accarrowhurwicz} further re- main bounded on $[t_{0},+\infty)$ and it holds that
	\begin{equation*}
		\norm{(\dot{x}(t),\dot{y}(t),\dot{\lambda}(t))}=\OO\Big(\frac{1}{t}\Big)\
		\text{as $t\to+\infty$}.
	\end{equation*}
	Assuming, moreover, that $f+g:X\times Y\to\rl$ is strongly convex, then, for any $(\xi,\psi,\eta)\in S\times U\times M$, the following estimate is verified (see Attouch et al. \cite{HA-ZC-JF-HR:22b}):
	\begin{equation*}
		\norm{(x(t),y(t))-(\xi,\psi)}_{X\times Y}=\OO\Big(\frac{1}{t}\Big)\ \text{as $t\to+\infty$}.
	\end{equation*}
	The latter may be particularized to an exponential estimate by further introducing tem- poral scaling factors in \eqref{sec5:accarrowhurwicz}; cf. Attouch et al. \cite[Remark 5.2]{HA-ZC-JF-HR:22b}.

	In view of the above discussion, we observe that the second-order differential system \eqref{sec5:accarrowhurwicz} clearly outperforms the first-order differential system \eqref{sec1:arrowhurwicz} in the case of a con- vex objective function. However, this may not be the case, as we shall see next, whenever the objective function is strongly convex.
\end{remark}

\section{Numerical experiments}\label{sec6}
In this section, we perform numerical experiments on the Arrow--Hurwicz differential system \eqref{sec1:arrowhurwicz} to support our theoretical findings. In particular, we consider two simple but representative (strongly) convex minimization problems in two dimensions.

\begin{example}
	Let $X,Y=\rl^{2}$ and consider the quadratic function $f:\rl^{2}\to\rl$ defined by $f(x_{1},x_{2})=(x_{1}^{2}-x_{1}x_{2}+x_{2}^{2})/2$. Clearly, $f$ is $\alpha$-strongly convex with $\alpha=1/2$. Moreover, $\nabla^{2}f(\,\cdot\,)$ is $\gamma$-bounded with $\gamma=3/2$. Further, let $A(x_{1},x_{2})=(x_{1},x_{2})$, $b=(1,1)$, and observe that $A$ is bounded from below with constant $\beta=1$. The unique minimizer of $f$ subject to the linear constraints corresponds to $\xi=(1,1)$; with associated Lagrange multiplier $\eta=(-1/2,-1/2)$. The evolution of the primal-dual gap function $L(x(t),\eta)-L(\xi,\lambda(t))$, the squared velocity $\norm{(\dot{x}(t),\dot{\lambda}(t))}^{2}$, the squared error $\normX{x(t)-\xi}^{2}$, and the trajectory of the solution component $x(t)=(x_{1}(t),x_{2}(t))$ of \eqref{sec1:arrowhurwicz} with initial data $x_{0}=(-1,1)$ and $\lambda_{0}=(1,1)$ is depicted in Figure \refeq{sec6:fig1:scvxprog}. For comparison, the corresponding quantities of the \eqref{sec5:accarrowhurwicz} differential system are displayed with damping parameter $\nu=3$, exploration coefficient $\theta=1/2$, and augmentation parameter $\mu=1/2$. The initial data of the \eqref{sec5:accarrowhurwicz} differential system is set accordingly to $x_{0}=(-1,1)$, $\lambda_{0}=(1,1)$, $v_{0}=(0,0)$, and $\mu_{0}=(1,1)$.
	\begin{figure}[t]
		\centering
		\subfloat{\includegraphics[trim=12pt 0pt 52pt 23pt,clip,
			width=.49\linewidth]{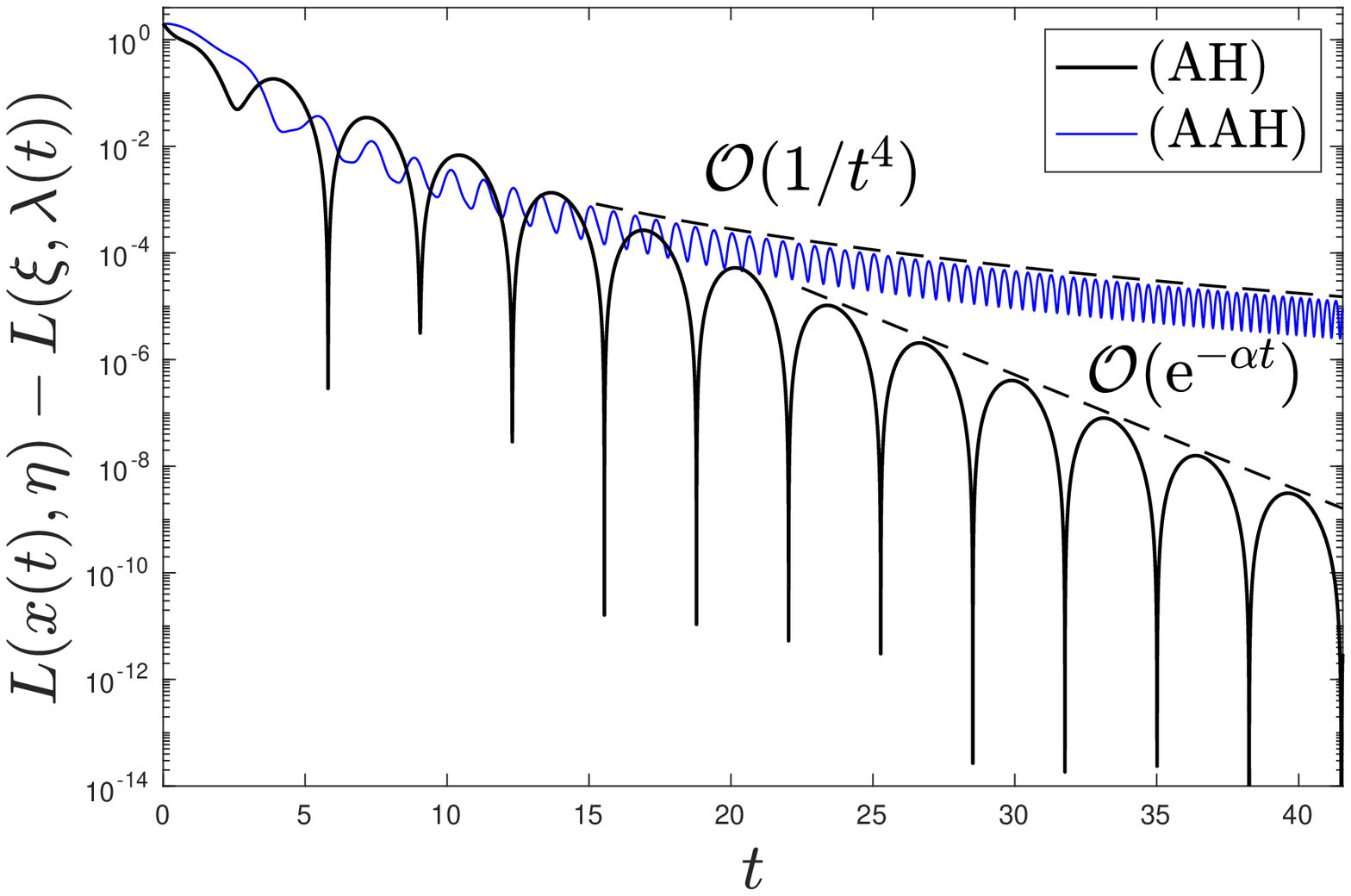}}%
		\hfill
		\subfloat{\includegraphics[trim=12pt 0pt 52pt 23pt,clip,
			width=.49\linewidth]{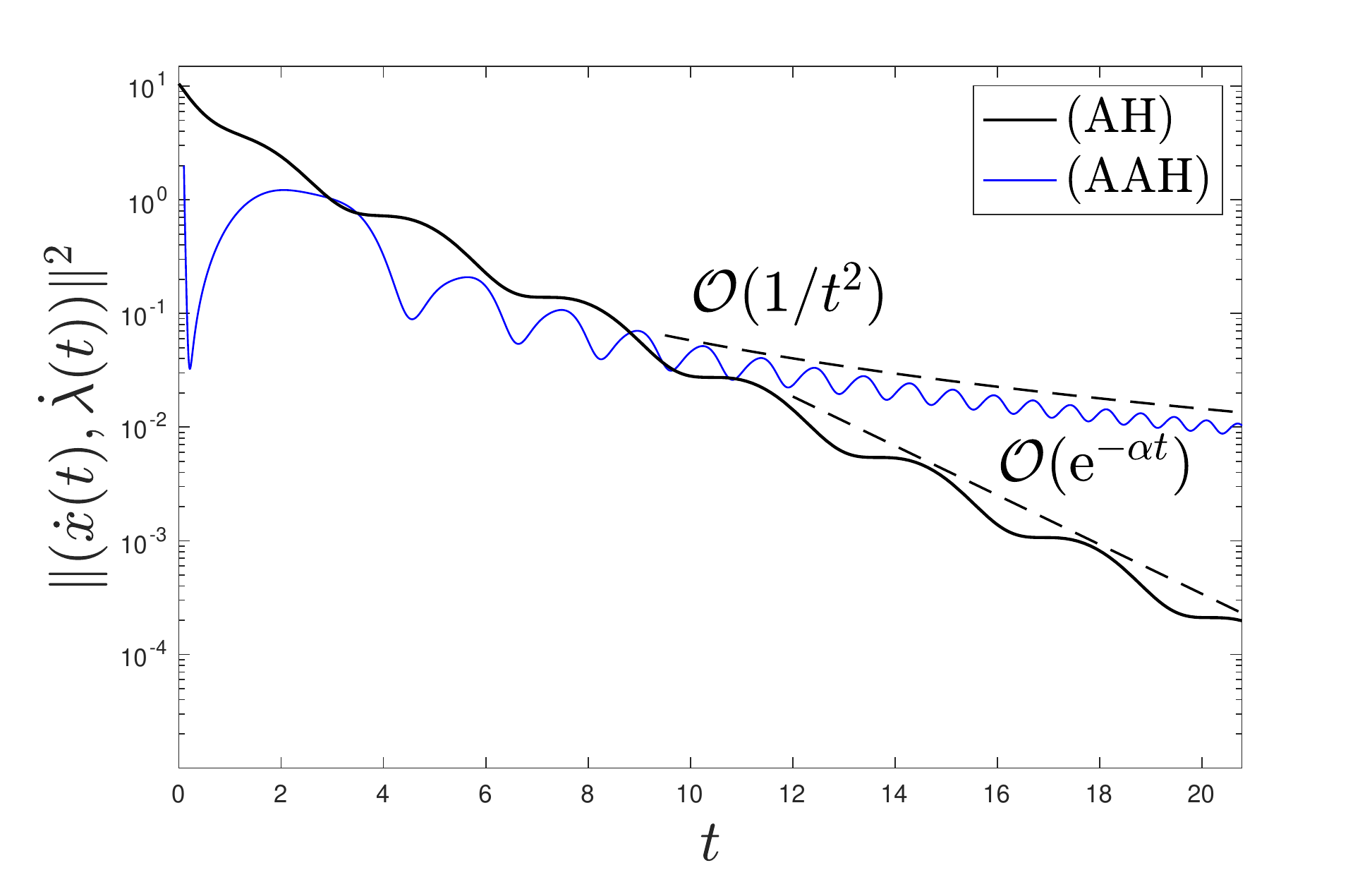}}%
		\hfill
		\subfloat{\includegraphics[trim=12pt 0pt 52pt 23pt,clip,
			width=.49\linewidth]{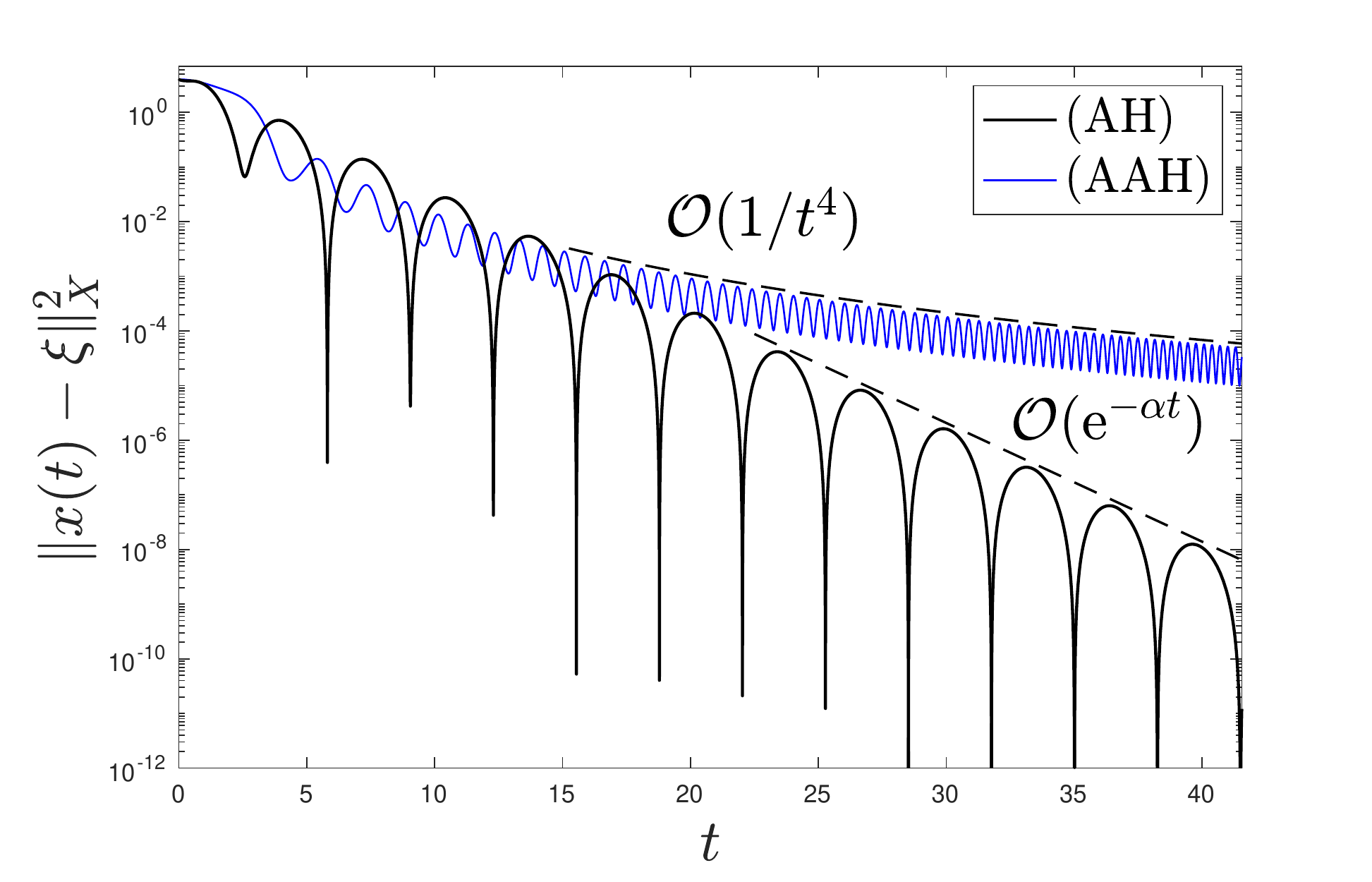}}%
		\hfill
		\subfloat{\includegraphics[trim=12pt 0pt 52pt 23pt,clip,
			width=.49\linewidth]{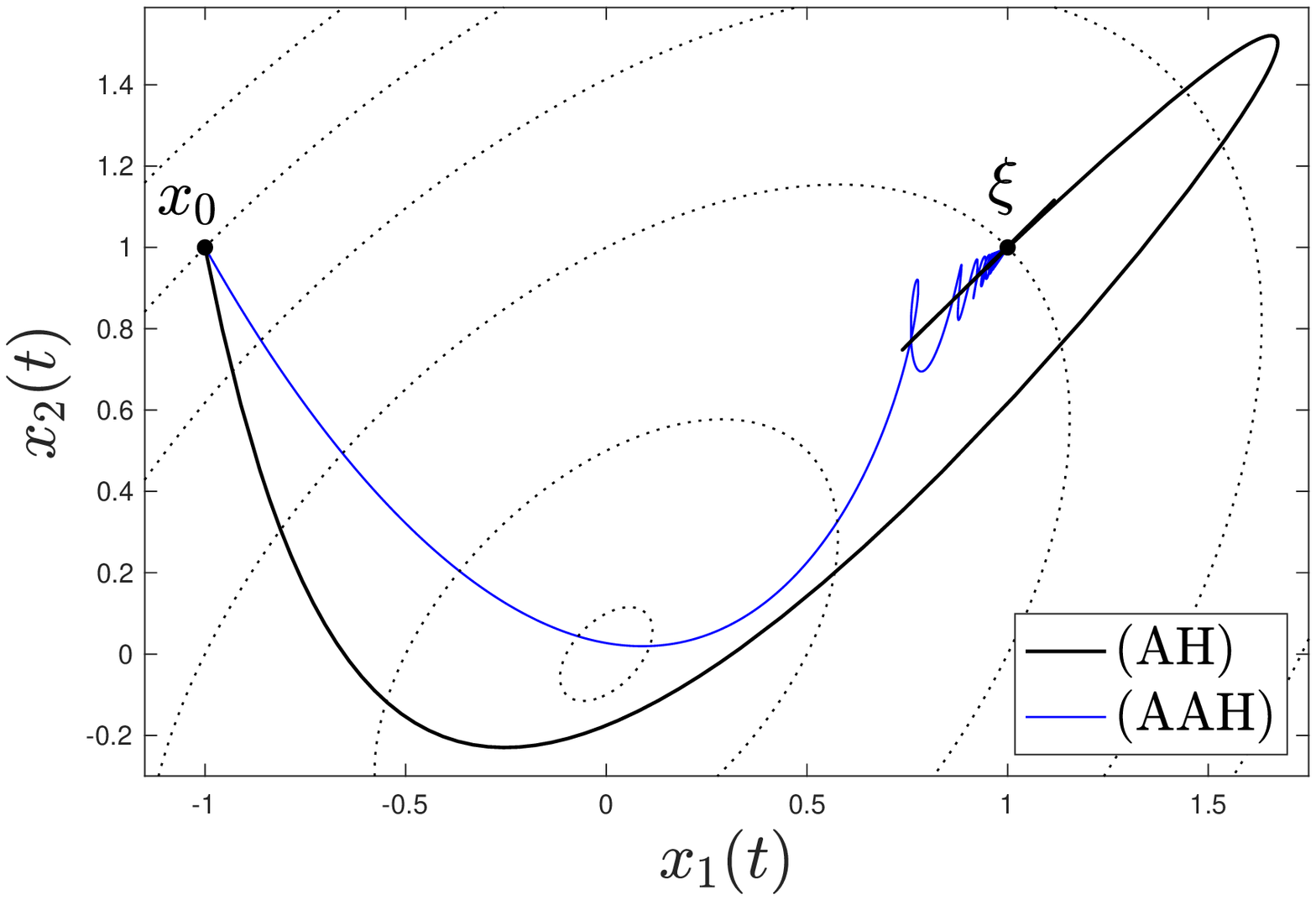}}%
		\vspace{-2pt}
		\caption{%
			Graphical view on the evolution of the primal-dual gap function $L(x(t),\eta)-L(\xi,\lambda(t))$, the squared velocity $\norm{(\dot{x}(t),\dot{\lambda}(t))}^{2}$, the squared error $\normX{x(t)-\xi}^{2}$, and the trajectories of the solution components $x(t)=(x_{1}(t),x_{2}(t))$ of \eqref{sec1:arrowhurwicz} and \eqref{sec5:accarrowhurwicz}.
		}
		\label{sec6:fig1:scvxprog}
	\end{figure}

	Analyzing Figure \refeq{sec6:fig1:scvxprog}, we observe that the solutions $(x(t),\lambda(t))$ of \eqref{sec1:arrowhurwicz} converge, as $t\to+\infty$, towards the unique mini-maximizer $(\xi,\eta)$ of the convex minimization problem \eqref{sec1:cvxproblem} and its associated Lagrange dual \eqref{sec1:dualcvxproblem}; cf. Proposition \ref{sec3:pr:strongconv}. Moreover, according to Theorem \ref{sec4:th:expconv}{\it (i)}, we find that the primal-dual gap function $L(x(t),\eta)-L(\xi,\lambda(t))$, the squared velocity $\norm{(\dot{x}(t),\dot{\lambda}(t))}^{2}$ and the squared error $\normX{x(t)-\xi}^{2}$ obey the exponential estimate $\OO\big(\mathrm{e}^{-\alpha t}\big)$ as $t\to+\infty$. Compared to the \eqref{sec5:accarrowhurwicz} dynamics for which the quantity $\norm{(\dot{x}(t),\dot{\lambda}(t))}^{2}$ evolves according to the estimate $\OO\big(1/t^{2}\big)$ as $t\to+\infty$ (even though the damping parameter is chosen to be $\nu=3$), we find that the solutions of \eqref{sec1:arrowhurwicz} indeed admit a faster and less oscillatory decay. It is interesting to note that, in this example, the primal-dual gap function $L(x(t),\eta)-L(\xi,\lambda(t))$ and the squared error $\normX{x(t)-\xi}^{2}$ for \eqref{sec5:accarrowhurwicz} appear to obey the estimate $\OO\big(1/t^{4}\big)$ rather than $\OO\big(1/t^{2}\big)$ as $t\to+\infty$.
\end{example}

\begin{example}
	Let $X,Y=\rl^{2}$ and consider the parameterized quadratic function $f:\rl^{2}\to\rl$ defined by $f(x_{1},x_{2})=\alpha(x_{1}^{2}+x_{2}^{2})/2$ with $\alpha>0$. Further, let $A(x_{1},x_{2})=\sqrt{2}(x_{1}+x_{2})/2$ and $\beta=\sqrt{2}/2$ so that $A^{\ast}$ is bounded from below with constant $\beta=1$. The unique minimizer of $f$ subject to the linear constraints is denoted by $\xi$; with cor- responding Lagrange multiplier $\eta$. Figure \refeq{sec6:fig2:decayrate} illustrates the decay properties of the primal-dual gap function $L(x(t),\eta)-L(\xi,\lambda(t))$, the squared velocity $\norm{(\dot{x}(t),\dot{\lambda}(t))}^{2}$, and the squared error $\norm{(x(t),\lambda(t))-(\xi,\eta)}^{2}$ of the solutions $(x(t),\lambda(t))$ of \eqref{sec1:arrowhurwicz} for the distinct values $\alpha=1$, $\alpha=2$, and $\alpha=3$. The initial data is set to $x_{0}=(-1,1)$ and $\lambda_{0}=1$.
	\begin{figure}[t]
		\centering
		\subfloat{\includegraphics[trim=12pt 0pt 52pt 23pt,clip,
			width=.49\linewidth]{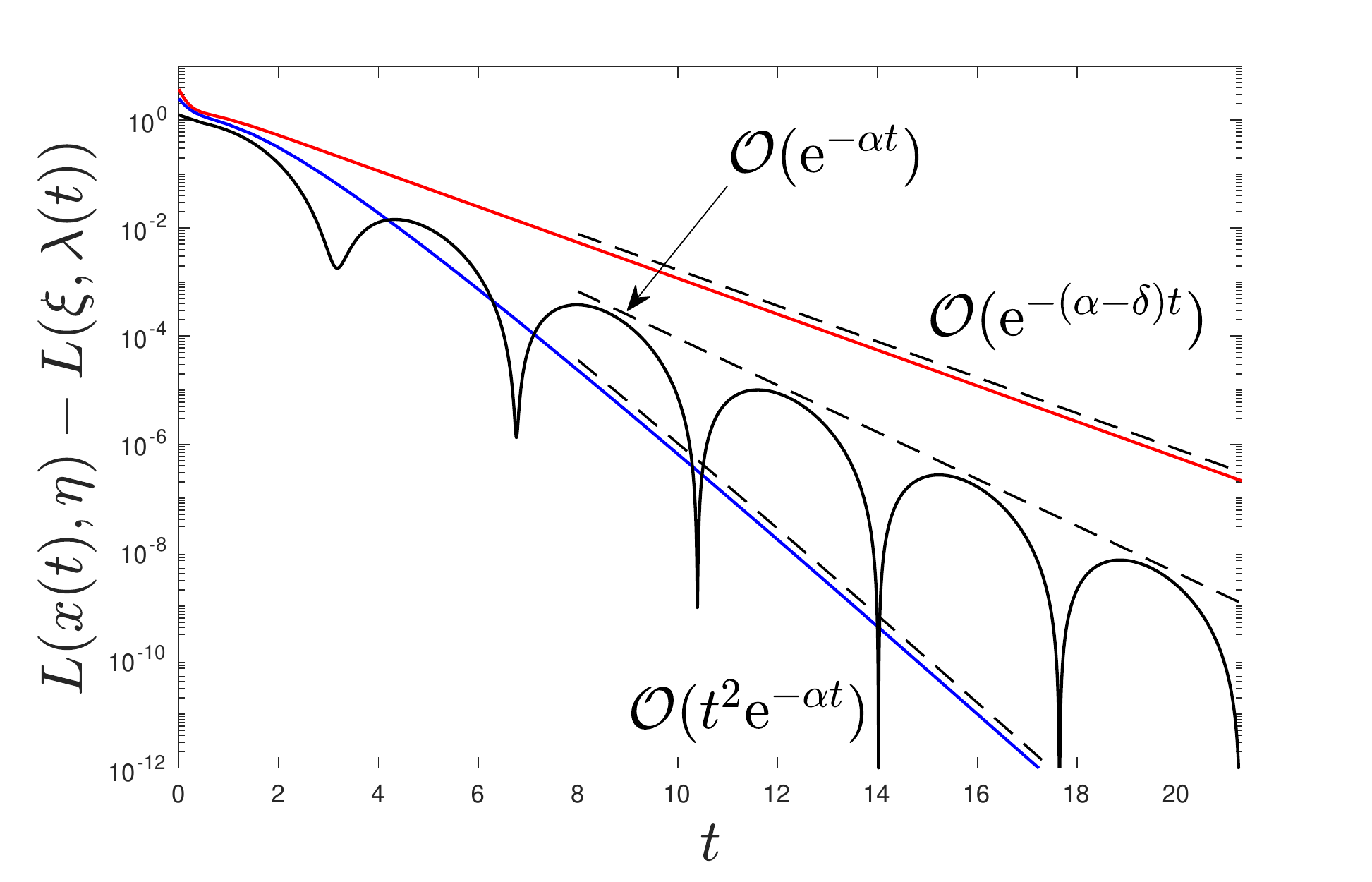}}%
		\hfill
		\subfloat{\includegraphics[trim=12pt 0pt 52pt 23pt,clip,
			width=.49\linewidth]{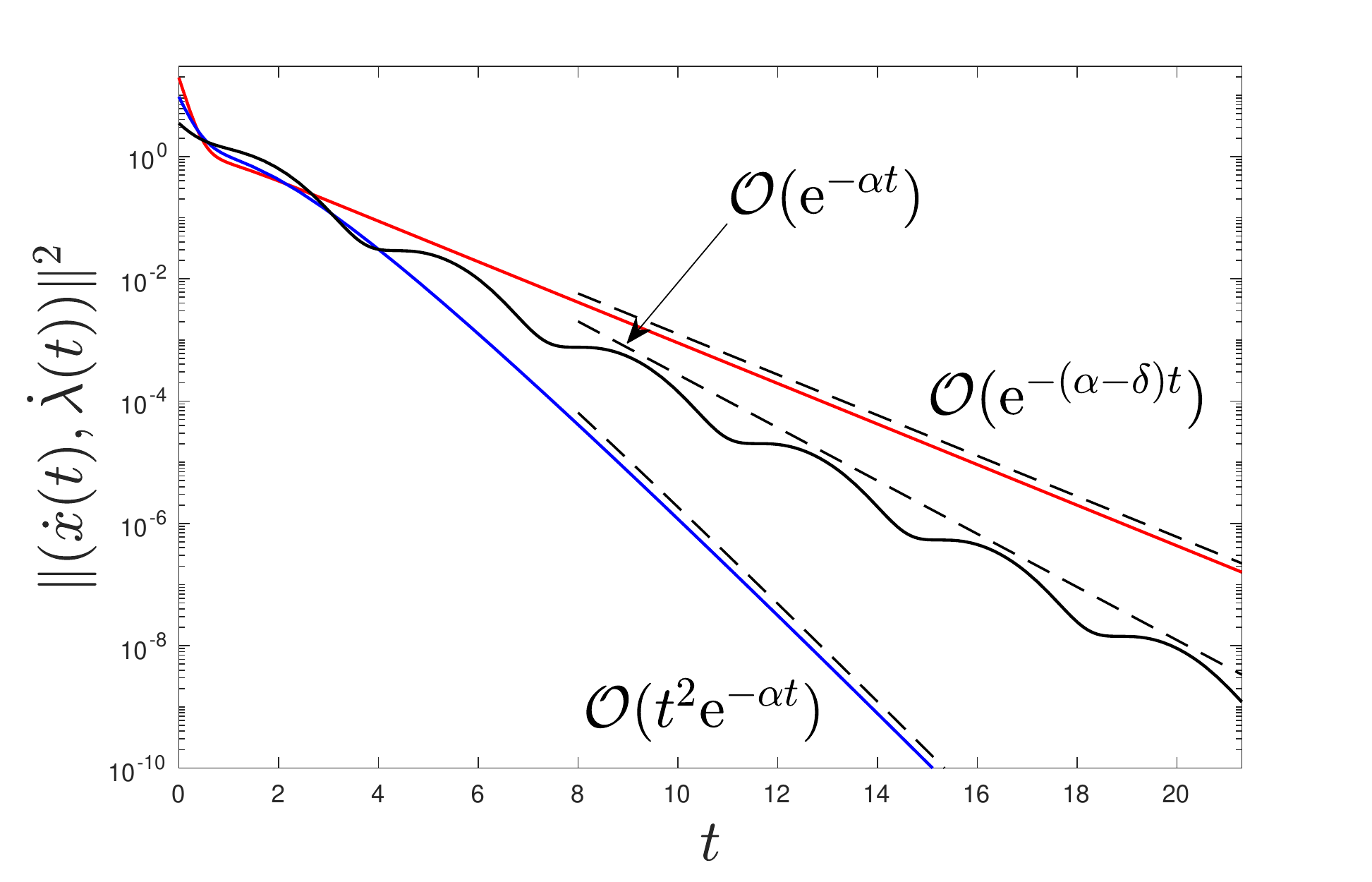}}%
		\hfill
		\subfloat{\includegraphics[trim=12pt 0pt 52pt 23pt,clip,
			width=.49\linewidth]{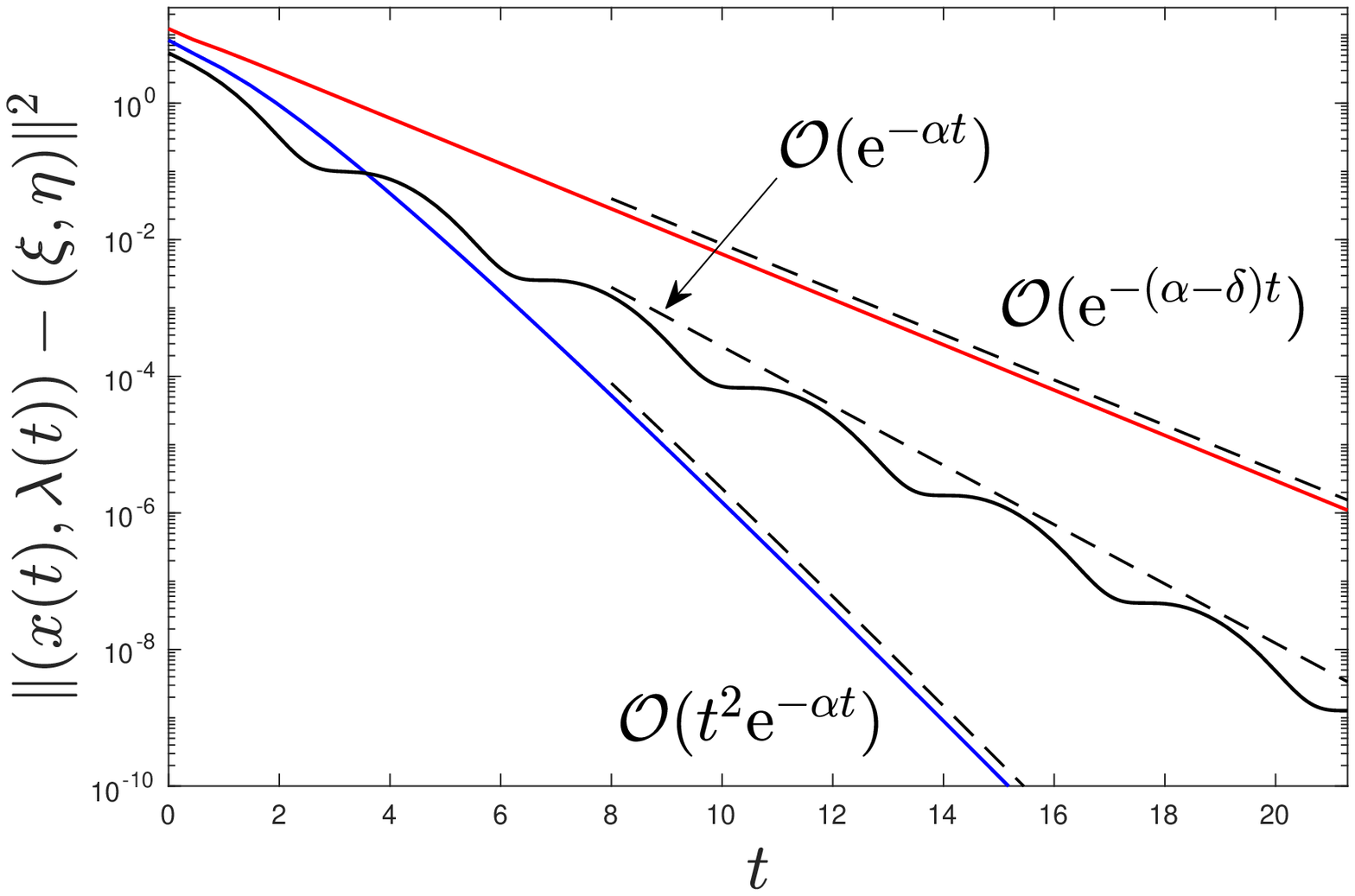}}%
		\hfill
		\subfloat{\includegraphics[trim=12pt 0pt 52pt 23pt,clip,
			width=.49\linewidth]{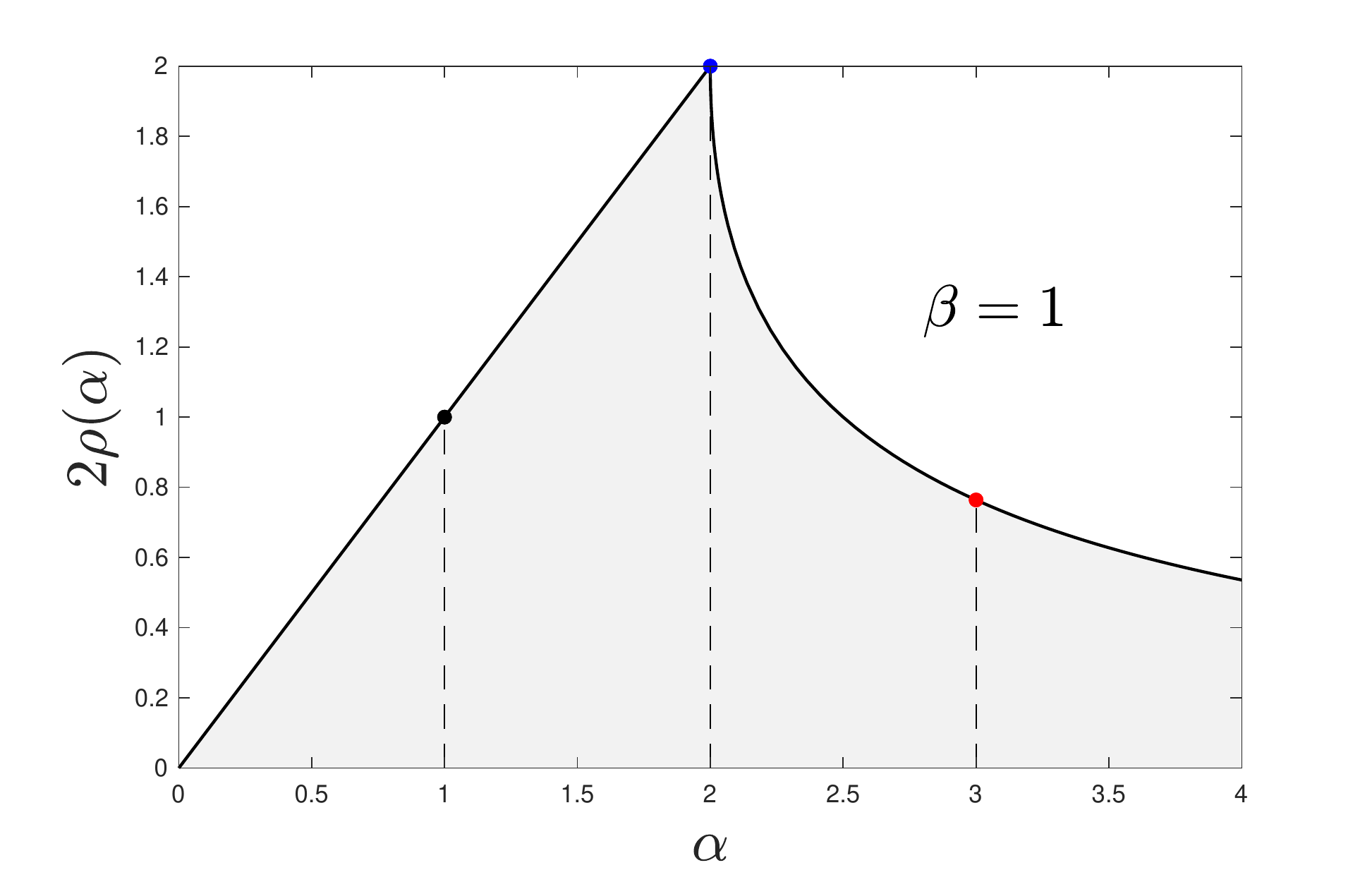}}%
		\vspace{-2pt}
		\caption{%
			Exponential decay properties of the primal-dual gap function $L(x(t),\eta)-L(\xi,\lambda(t))$, the squared velocity $\norm{(\dot{x}(t),\dot{\lambda}(t))}^{2}$, and the squared error $\norm{(x(t),\lambda(t))-(\xi,\eta)}^{2}$ of the solutions $(x(t),\lambda(t))$ of \eqref{sec1:arrowhurwicz} for distinct values of $\alpha$.
		}
		\label{sec6:fig2:decayrate}
	\end{figure}

	Figure \refeq{sec6:fig2:decayrate} suggests that the solutions $(x(t),\lambda(t))$ of \eqref{sec1:arrowhurwicz} converge, as $t\to+\infty$, at an exponential rate towards the unique mini-maximizer $(\xi,\eta)$ of the convex minimization problem \eqref{sec1:cvxproblem} and its associated Lagrange dual \eqref{sec1:dualcvxproblem}. Indeed, the decay properties of the solutions of \eqref{sec1:arrowhurwicz} may be categorized as predicted by Corollary \refeq{sec4:co:dualexpconv}: In case {\it (i)}, we have $\alpha^{2}<4\beta$ with the rate estimate $\OO\big(\mathrm{e}^{-\alpha t}\big)$ as $t\to+\infty$. We refer to this case as the `under-damped case' as the solutions of \eqref{sec1:arrowhurwicz} admit a significant oscillatory behavior. In case {\it (ii)}, we have $\alpha^{2}=4\beta$ with the rate estimate $\OO\big(t^{2}\mathrm{e}^{-\alpha t}\big)$ as $t\to+\infty$. This case refers to the `critically-damped case' for which we observe the fastest possible convergence of the solutions of \eqref{sec1:arrowhurwicz}. Finally, in case {\it (iii)}, we have $\alpha^{2}>4\beta$ with the rate estimate $\OO\big(\mathrm{e}^{-(\alpha-\delta)t}\big)$ as $t\to+\infty$, where $\delta=\sqrt{\alpha^{2}-4\beta}$. In this case, referred to as the `over-damped case', the decay of the solutions of \eqref{sec1:arrowhurwicz} is considerably degraded.
\end{example}

\section*{Acknowledgment}
The author expresses his gratitude to the two anonymous reviewers whose comments and suggestions led to a significant improvement of this manuscript.

\section*{Disclosure statement}
No potential conflict of interest was reported by the author.

\section*{Funding}
Research supported by the German Research Foundation (DFG).

\bibliographystyle{tfnlm}
\bibliography{../../../bib/alias,../../../bib/sn}

\begin{thebibliography}{10}
\providecommand{\url}[1]{\normalfont{#1}}
\providecommand{\urlprefix}{Available from: }

\bibitem{IE-RT:99}
Ekeland~I, T{\'e}mam~R. Convex analysis and variational problems. Philadelphia:
  Society for Industrial and Applied Mathematics; 1999. Classics in applied
  mathematics.

\bibitem{JBHU-CL:93}
Hiriart-Urruty~JB, Lemar{\'e}chal~C. Convex analysis and minimization
  algorithms {I}. New York: Springer; 1993. Grundlehren der mathematischen
  Wissenschaften 305.

\bibitem{HHB-PLC:17}
Bauschke~HH, Combettes~PL. Convex analysis and monotone operator theory in
  {H}ilbert spaces. New York: Springer; 2017. CMS Books in Mathematics.

\bibitem{KJA-LH:51}
Arrow~KJ, Hurwicz~L. A gradient method for approximating saddle points and
  constrained maxima. RAND Corp, Santa Monica, CA. 1951;P--223.

\bibitem{TK:56}
Kose~T. Solutions of saddle value problems by differential equations.
  Econometrica. 1956; 24:59--70.

\bibitem{KJA-LH-HU:58}
Arrow~KJ, Hurwicz~L, Uzawa~H. Studies in linear and non-linear programming.
  Stanford, CA: Stanford University Press; 1958.

\bibitem{GJM:62}
Minty~GJ. Monotone (nonlinear) operators in {H}ilbert space. Duke Math J.
  1962;\hspace{0pt}29:341--346.

\bibitem{RTR:69a}
Rockafellar~RT. Monotone operators associated with saddle-functions and minimax
  problems. In Nonlinear Functional Analysis, {\it Proceedings of Symposia in
  Pure Math}, Amer Math Soc. 1969;241--250.

\bibitem{RTR:71}
Rockafellar~RT. Saddle-points and convex analysis. In Differential Games and
  Related Topics, North-Holland. 1971;109--127.

\bibitem{MGC-AP:69}
Crandall~MG, Pazy~A. Semi-groups of nonlinear contractions and dissipative
  sets. J Funct Anal. 1969;3:376--418.

\bibitem{HB:73}
Br{\'e}zis~H. Op{\'e}rateurs maximaux monotones et semi-groupes de contractions
  dans les espaces de {H}ilbert. Amsterdam: North-Holland; 1973. Mathematics
  Studies 5.

\bibitem{TK:67}
Kato~T. Nonlinear semigroups and evolution equations. J Math Soc Japan.
  1967;19:508--520.

\bibitem{YK:67}
K{\=o}mura~Y. Nonlinear semi-groups in {H}ilbert space. J Math Soc Japan.
  1967;19:493--507.

\bibitem{FEB:76}
Browder~FE. Nonlinear operators and nonlinear equations of evolution in
  {B}anach spaces. In Nonlinear Functional Analysis, {\it Proceedings of
  Symposia in Pure Math}, Amer Math Soc. 1976.

\bibitem{JBB-HB:76}
Baillon~JB, Br{\'e}zis~H. Une remarque sur le comportement asymptotique des
  semigroupes non lin{\'e}aires. Houston J Math. 1976;2:5--7.

\bibitem{VIV:85}
Venets~VI. Continuous algorithms for solution of convex optimization problems
  and finding saddle points of convex-concave functions with the use of
  projection operators. Optimization. 1985;16:519--533.

\bibitem{SDF-ABI:89}
Fl\aa{}m~SD, Ben-Israel~A. Approximating saddle points as equilibria of
  differential inclusions. J Math Anal Appl. 1989;141:264--277.

\bibitem{JPA-AC:84}
Aubin~JP, Cellina~A. Differential inclusions. New York: Springer; 1984.
  Grundlehren der mathematischen Wissenschaften 264.

\bibitem{BTP:70}
Polyak~BT. Iterative methods using {L}agrange multipliers for solving extremal
  problems with constraints of the equation type. USSR Comput Math and Math
  Phys. 1970;10:42--52.

\bibitem{ASN-DBY:78}
Nemirovski~AS, Yudin~DB. Cesari convergence of the gradient method of
  approximating saddle points of convex-concave functions. Dokl Akad Nauk SSSR.
  1978;239:1056--1059.

\bibitem{LMB:67}
Bregman~LM. The relaxation method of finding the common point of convex sets
  and its application to the solution of problems in convex programming. USSR
  Comput Math and Math Phys. 1967;7:200--217.

\bibitem{AH:91}
Haraux~A. Syst{\`e}mes dynamiques dissipatifs et applications. Masson, Paris:
  Recherches en Math{\'e}matiques Appliqu{\'e}es 17; 1991.

\bibitem{AP:79}
Pazy~A. Semi-groups of nonlinear contractions and their asymptotic behavior. In
  Nonlinear Analysis and Mechanics: Heriot-Watt Symposium III, Pitman, London.
  1979;36--134.

\bibitem{JP-SS:10}
Peypouquet~J, Sorin~S. Evolution equations for maximal monotone operators:
  {A}symptotic analysis in continuous and discrete time. J Convex Anal.
  2010;17:1113--1163.

\bibitem{GBP:79}
Passty~GB. Ergodic convergence to a zero of the sum of monotone operators in
  {H}ilbert space. J Math Anal Appl. 1979;72:383--390.

\bibitem{HB:78}
Br{\'e}zis~H. Asymptotic behavior of some evolution systems. In Nonlinear
  Evolution Equations, Madison, 1977, Acad Press. 1978;141--154.

\bibitem{ZO:67}
Opial~Z. Weak convergence of the sequence of successive approximations for
  nonexpansive mappings. Bull Amer Math Soc. 1967;73:591--597.

\bibitem{REB:75}
Bruck~RE. Asymptotic convergence of nonlinear contraction semi-groups in
  {H}ilbert spaces. J Funct Anal. 1975;18:15--26.

\bibitem{ZC-HR:14}
Chbani~Z, Riahi~H. Existence and asymptotic behaviour for solutions of
  dynamical equilibrium systems. Evol Equ Control Theory.
  2014;3:1--14.

\bibitem{HB:11}
Br{\'e}zis~H. Functional analysis, {S}obolev spaces and partial differential
  equations. New York: Springer; 2011.

\bibitem{MB-GHG-JL:05}
Benzi~M, Golub~GH, Liesen~J. Numerical solution of saddle point problems. Acta
  Numer. 2005;14:1--137.

\bibitem{BTP:64}
Polyak~BT. Some methods of speeding up the convergence of iteration methods.
  USSR Comput Math and Math Phys. 1964;4:1--17.

\bibitem{FA:00}
{\'A}lvarez~F. On the minimizing property of a second order dissipative system
  in {H}ilbert spaces. SIAM J Control Optim. 2000;38:1102--1119.

\bibitem{HA-XG-PR:00}
Attouch~H, Goudou~X, Redont~P. The heavy ball with friction method, {I}. {T}he
  continuous dynamical system: {G}lobal exploration of the local minima of a
  real-valued function by asymptotic analysis of a dissipative dynamical
  system. Commun Contemp Math. 2000; 2:1--34.

\bibitem{HA-ZC-JF-HR:22b}
Attouch~H, Chbani~Z, Fadili~J, et~al. Fast convergence of dynamical {ADMM} via
  time scaling of damped inertial dynamics. J Optim Theory Appl.
  2022;193:704--736.

\bibitem{RIB-DKN:21}
Bo{\c{t}}~RI, Nguyen~DK. Improved convergence rates and trajectory convergence
  for primal-dual dynamical systems with vanishing damping. J Differ Equ.
  2021;303:369--406.

\bibitem{YN:83}
Nesterov~Y. A method of solving a convex programming problem with convergence
  rate $\mathcal{O}(1/k^{2})$. Sov Math Dokl. 1983;27:372--376.

\bibitem{WS-SB-EC:16}
Su~W, Boyd~S, Cand{\`e}s~E. A differential equation for modeling {N}esterov's
  accelerated gradient method: {T}heory and insights. J Mach Learn Res.
  2016;17:1--43.

\end{thebibliography}

\end{document}